\theoremstyle{plain}
\newtheorem{theorem}{Theorem}[section]
\newtheorem{thm}[theorem]{Theorem}
\newtheorem{cor}[theorem]{Corollary}
\newtheorem{lem}[theorem]{Lemma}
\newtheorem{prop}[theorem]{Proposition}
\newtheorem{conj}[theorem]{Conjecture}
\theoremstyle{definition}
\newtheorem{defn}[theorem]{Definition}
\newtheorem{prob}[theorem]{Problem}
\newtheorem{rmk}[theorem]{Remark}
\theoremstyle{remark}
\definecolor{titlecol}{named}{BrickRed}
\definecolor{headcol}{named}{Violet}
\definecolor{seccol}{named}{Red}
\definecolor{sseccol}{named}{Bittersweet}
\definecolor{pbcol}{named}{Black}
\definecolor{sncol}{named}{Brown}
\definecolor{acol1}{named}{Red}
\definecolor{acol2}{named}{Apricot}
\newcommand\cE{{\mathcal E}}
\def\tr{\mathop{\rm tr}\nolimits}
\def\cE{{\mathcal E}}
\def\cK{{\mathcal K}}
\def\cH{{\mathcal H}}
\def\p{\partial}
\begin{document}

\title{Constant scalar curvature equation and regularity of its weak solution}
\date{\today}

\author{Weiyong He}
\address{Department of Mathematics, University of Oregon, Eugene, OR 97403. }
\email{whe@uoregon.edu}

\author{Yu Zeng}
\address{Department of Mathematics, University of Rochester, Rochester, NY 14627.}
\email{yu.zeng@rochester.edu}

\begin{abstract}In this paper we study constant scalar curvature equation (CSCK), a nonlinear fourth order elliptic equation, and its weak solutions on K\"ahler manifolds. We first define a notion of weak solution of CSCK for an $L^\infty$ K\"ahler metric. The main result is to show that such a weak solution (with uniform $L^\infty$ bound) is smooth. As an application, this answers in part a conjecture of Chen regarding the regularity of $K$-energy minimizers.  The new technical ingredient is a $W^{2, 2}$ regularity result for the Laplacian equation $\Delta_g u=f$ on K\"ahler manifolds, where the metric has only $L^\infty$ coefficients. It is well-known that such a $W^{2, 2}$ regularity ($W^{2, p}$ regularity for any $p>1$) fails in general (except for dimension two) for uniform elliptic equations of the form $a^{ij}\p^2_{ij}u=f$ for $a^{ij}\in L^\infty$, without certain smallness assumptions on the local oscillation of $a^{ij}$. We observe that the K\"ahler condition plays an essential role to obtain a $W^{2, 2}$ regularity for elliptic equations with only $L^\infty$ elliptic coefficients on compact manifolds.\end{abstract}
\maketitle

\section{Introduction}

Let $(M, [\omega], J)$ be a compact K\"ahler manifold with a fixed K\"ahler class. In 1980s Calabi  proposed to find a canonical representative of of K\"ahler metrics in $[\omega]$ in his seminal work \cite{C1, C2}, by minimizing the $L^2$ energy of curvature. A critical point, called \emph{extremal} metric (extK), satisfies the equation that $\nabla^{1, 0}R$ is a holomorphic vector field, where $R$ is the scalar curvature. In particular a K\"ahler metric with constant scalar curvature (CSCK) is an extremal metric.  A fundamental problem in K\"ahler geometry is the existence of a CSCK (or extK) in a given K\"ahler class. Examples of CSCK contain K\"ahler-Einstein metrics when the first Chern class $c_1$ is definite or zero. When $c_1=0$ or $c_1<0$, there always exists a K\"ahler-Einstein metrics by the famous work of Yau \cite{Yau} and Aubin \cite{Aubin} (for $c_1<0$). In general there are obstructions to the existence of CSCK. When the K\"ahler class is integral, i.e. $2\pi[\omega]$ is given by the first Chern class $c_1(L)$ of a positive line bundle $L\rightarrow M$ ($M$ is projective by the famous Kodaira embedding),  the so-called Yau-Tian-Donaldson conjecture predicts that $c_1(L)$ contains a CSCK if and only if the polarized manifold $(M, L)$ is K-stable. When the anti-canonical bundle is positive ($c_1>0$), namely $M$ is Fano, recently Chen, Donaldson and Sun have proved that a Fano manifold supports a K\"ahler-Einstein metric if and only if it is K-stable (\cite{CDS1}, \cite{CDS2} and \cite{CDS3}). Donaldson \cite{Donaldson} has proved that the Yau-Tian-Donaldson conjecture holds for toric K\"ahler surfaces. There are also vast existence results of CSCK (or extK) on concrete examples. On one hand,
when a CSCK exists on a projective manifold, then it is K-stable, see \cite{Donaldson02, CT, Stoppa, Mabuchi, BDL}; but on the other hand the general existence results remain open. 
 We shall emphasize the study of CSCK is a huge subject and there are vast research in literature of the subject.  
 
 In this paper we focus on its PDE aspect, to consider the regularity of  constant scalar curvature equation in weak sense for $L^\infty$ K\"ahler metrics. 

All K\"ahler metrics in $[\omega]$ can be parametrized by smooth functions in the space of K\"ahler potentials. 
\[
\cH_\omega=\{\phi\in  C^\infty(M): \omega_\phi=\omega+\sqrt{-1}\p\bar\p \phi>0\}
\]
For any $\phi\in \cH_\omega$, the scalar curvature is given by
\[
R_\phi=-g^{i\bar j}_\phi \p_i\p_{\bar j}\log (\det(g_{i\bar j}+\phi_{i\bar j}))
\]
and the constant scalar curvature equation is simply given by,
\begin{equation}\label{CSCK}
R_\phi=\underline{R}
\end{equation}
where $\underline{R}$ is a constant depending only on $(M, [\omega], J)$. Mabuchi \cite{M1} introduced the \emph{K-energy} $\cK$ (or the \emph{Mabuchi functional}), a functional defined on $\cH_\omega$, characterized by its variation
\begin{equation}\label{kenergy}
\delta \cK=-\int_M (\delta\phi)(R_\phi-\underline{R}) \omega_\phi^n.
\end{equation}
Mabuchi \cite{M2} also introduced a natural Riemannian metric on $\cH_\omega$, with the norm of a tangent vector $v\in C^\infty(M)$ at $\phi\in \cH_\omega$ defined by
\[
\|v\|^2=\int_M v^2\omega^n_\phi
\]
The $\cK$-energy is a \emph{convex} functional on $\cH_\omega$ with respect to the Mabuchi metric, i.e along (smooth) geodesics. Later on Donaldson \cite{Donaldson97} described a beautiful geometric picture of  $\cH_\omega$ with the Mabuchi metric, and set up a program which tights up the $\cK$-energy, the existence and uniqueness of CSCK and the notion of stability in Geometric Invariant Theory (GIT). The geodesic equation in $\cH_\omega$ can be written as a homogeneous complex Monge-Ampere equation, observed by Semmes \cite{Semmes} and Donaldson \cite{Donaldson97}. Chen \cite{Chen} confirmed a conjecture of Donaldson by proving that given any two points in $\cH_\omega$, there exists a unique solution to the homogenous complex-Ampere equation with $C^{1, \bar 1}$ regularity, which are potentials of a K\"ahler current in $[\omega]$ with $L^\infty$ coefficients. This leads (naturally) to the following conjecture of Chen, 

\begin{conj}[Chen]\label{Chen}A $C^{1, \bar 1}$ minimizer in a given K\"ahler class of the K-energy is a smooth K\"ahler metric with constant scalar curvature. 
\end{conj}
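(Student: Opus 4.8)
The plan is to deduce Conjecture~\ref{Chen} from the main regularity theorem of this paper in three steps: (i) identify a $C^{1,\bar 1}$ minimizer as a weak solution of \eqref{CSCK}; (ii) upgrade it to a \emph{uniform} $L^\infty$ K\"ahler metric, i.e.\ one with $\lambda\omega\le\omega_\phi\le\Lambda\omega$; and (iii) invoke the regularity of such weak solutions.

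\emph{Step (i).} A $C^{1,\bar 1}$ potential $\phi$ lies in the finite-energy class $\mathcal E^1(M,\omega)$, on which $\cK$ extends to a convex, lower semicontinuous functional (Berman--Berndtsson, Darvas--Rubinstein) with the Chen--Tian decomposition $\cK=E+H$, where $H(\phi)=\int_M\log\!\big(\tfrac{\omega_\phi^n}{\omega^n}\big)\,\omega_\phi^n$ is the entropy and $E$ is an explicit combination of Monge--Ampere energies, differentiable with $\delta E(\phi)\cdot v=-\int_M v\,\big(\tr_{\omega_\phi}\Ric(\omega)-\underline R\big)\,\omega_\phi^n$. Since $\phi\in C^{1,\bar 1}$ we already have $\omega_\phi\le\Lambda\omega$, hence $\omega_\phi^n\le\Lambda^n\omega^n$ and $H(\phi)<\infty$. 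Because $\phi$ minimizes, a standard variational argument in the finite-energy setting (Chen--Cheng, Berman--Darvas--Lu) --- deforming $\phi$ along $\mathcal E^1$-geodesics towards smooth strictly plurisubharmonic perturbations, rather than along naive affine paths, which may leave the cone where $\omega_\phi$ degenerates --- yields the Euler--Lagrange relation $\Delta_{\omega_\phi}F=\tr_{\omega_\phi}\Ric(\omega)-\underline R$ weakly, with $F=\log(\omega_\phi^n/\omega^n)$; equivalently $\int_M v\,(R_\phi-\underline R)\,\omega_\phi^n=0$ for all test functions $v$, so $\omega_\phi$ is a weak solution of \eqref{CSCK} in the sense introduced in this paper, and the first variation \eqref{kenergy} of $\cK$ vanishes distributionally.

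\emph{Step (ii), the crux.} We must rule out degeneration of $\omega_\phi$, i.e.\ prove a uniform lower bound $F\ge -C$: combined with $\tr_\omega\omega_\phi\le C$ this forces every eigenvalue of $\omega_\phi$ relative to $\omega$ to be bounded below, hence $\lambda\omega\le\omega_\phi\le\Lambda\omega$. The obstruction is that $\Delta_{\omega_\phi}$ is a priori only a measurable, possibly degenerate operator, and that $\tr_{\omega_\phi}\Ric(\omega)$ can blow up exactly where $\omega_\phi$ is small, so neither a maximum principle nor a classical Moser iteration is available. This is where the new ingredient enters: apply the $W^{2,2}$ regularity for $\Delta_g u=f$ with $g$ an $L^\infty$ K\"ahler metric to $u=F$ --- after replacing $\omega_\phi$ by a local elliptic regularization $\omega_{\phi,\varepsilon}$ of its coefficients, the $W^{2,2}$ bound being \emph{uniform in $\varepsilon$} precisely because of the K\"ahler condition --- obtaining $F\in W^{2,2}_{\rm loc}$ with bounds depending only on $n$, $\Lambda$ and the entropy. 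With $F\in W^{2,2}$ one can then run a De~Giorgi--Nash--Moser iteration against the (regularized, now genuinely elliptic) equation: starting from $e^{-F}\in L^1$ and using $\tr_{\omega_\phi}\Ric(\omega)\lesssim e^{-F}(\tr_\omega\omega_\phi)^{n-1}\lesssim e^{-F}$, one closes the iteration to get $F\ge -C$ uniformly in $\varepsilon$, then passes to the limit. I expect this to be the main obstacle: the quantitative feedback between the degeneration of $\omega_\phi$ and the blow-up of the right-hand side must be controlled uniformly along the regularization, and it is exactly the K\"ahler-specific $W^{2,2}$ estimate --- where the classical Calder\'on--Zygmund theory fails for merely $L^\infty$ coefficients --- that is designed to close this gap.

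\emph{Step (iii).} Given (i) and (ii), $\phi$ is a weak solution of \eqref{CSCK} with $\lambda\omega\le\omega_\phi\le\Lambda\omega$, i.e.\ a weak solution of CSCK with uniform $L^\infty$ bound in the sense of this paper; the main theorem then gives $\phi\in C^\infty(M)$, and $\omega_\phi$ is a genuine K\"ahler metric solving $R_\phi=\underline R$ classically. Hence every $C^{1,\bar 1}$ minimizer of the $\cK$-energy in $[\omega]$ is a smooth constant scalar curvature K\"ahler metric, which is Conjecture~\ref{Chen}.
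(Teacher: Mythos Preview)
The statement you are attempting to prove is a \emph{conjecture} in this paper, not a theorem; the paper explicitly does not prove it in full. What the paper establishes is only the partial result (Theorem~\ref{main}): a minimizer which is already a uniformly $L^\infty$ K\"ahler metric---i.e.\ one satisfying $\epsilon\omega\le\omega_\phi\le\Lambda\omega$ for some $\epsilon>0$---is smooth. The passage from a general $C^{1,\bar 1}$ minimizer (where one only knows $\omega_\phi\ge 0$ as a current) to a uniformly $L^\infty$ metric is precisely the open difficulty the paper isolates and defers; see the discussion after Theorem~\ref{estimate}, where the authors say the lower bound is ``merely a priori estimate'' and that they ``will address this difficulty in future work.''

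Your Step~(ii) therefore contains a genuine gap, and the gap is circular. The $W^{2,2}$ regularity of Theorem~\ref{weak} (and Theorem~\ref{prop2.6}) is proved \emph{under the hypothesis} $\phi\in\cH^{1,1}_{\omega+}$, i.e.\ with a fixed positive lower bound on $\omega_\phi$; the constants in the estimate depend on that lower bound (through $\lambda_1$ in Lemma~\ref{lem2.6} and through the ellipticity constants in the Bochner computation). You propose to apply this estimate to a regularization $\omega_{\phi,\varepsilon}$ and assert the bound is ``uniform in $\varepsilon$ precisely because of the K\"ahler condition,'' but nothing in the paper supports that: the K\"ahler condition buys $W^{2,2}$ \emph{given} uniform ellipticity, it does not manufacture ellipticity. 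Likewise, your Moser iteration needs control of $\tr_{\omega_\phi}\Ric(\omega)$, which blows up exactly where $\omega_\phi$ degenerates; the inequality $\tr_{\omega_\phi}\Ric(\omega)\lesssim e^{-F}$ you invoke is not available without already knowing the eigenvalues are comparable. Finally, even Step~(i) presupposes strict positivity: the paper's notion of $\Delta$-weak solution, and the derivation of the Euler--Lagrange equation in \eqref{Eweak}, are formulated for $\phi\in\cH^{1,1}_{w+}$, not for arbitrary $C^{1,\bar 1}$ potentials. In short, Steps~(i) and~(iii) match the paper, but Step~(ii) is the entire content of the conjecture beyond what the paper proves, and your sketch does not close it.
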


A point to make is that the definition of $\cK$ in \eqref{kenergy} requires regularity for $\phi\in C^4$ and $\omega_\phi$ is strictly positive, while Chen \cite{chen00} observed that $\cK$ has a nice well-defined formula for $C^{1, \bar 1}$ potentials.  

A CSCK is evidently a critical point of $\cK$ and indeed, it is a minimizer of $\cK$, proved by Donaldson \cite{Donaldson01} and Li \cite{CL} for integral classes via quantization strategy and Chen-Tian \cite{CT, CT2} for general cases via the convexity of $\cK$ along $C^{1, \bar 1}$ geodesics.  We should mention that even though formally $\cK$ is convex but due to the lack of regularity beyond $C^{1, \bar 1}$, it is a very subtle and deep problem to prove $\cK$ is convex along $C^{1, \bar 1}$ geodesics. Very recently Berman-Berndtsson \cite{BB} proved that $\cK$ is convex along these weak geodesics (see also Chen-Li-Paun \cite{CLP}). In this perspective, it is natural and intuitive to approach the existence of CSCK by looking for a minimizer of K-energy. Regularity of such a minimizer is then an outstanding problem.  

Recently Darvas \cite{Darvas14} has provided a deep understanding of the metric completion of the space of K\"ahler potentials $\cH_\omega$ with the Mabuchi metric.  He proved that the metric completion of $\cH_\omega$ with respect to the Mabuchi metric is given by an energy class $\cE^2$ of $\omega$-plurisubharmonic functions, with the $d_2$ metric. He also realized that the energy class $\cE^1$, which is the metric completion of $\cH_\omega$ with the $d_1$ metric is also important. These energy classes $\cE^p$
were introduced and studied by Guedj-Zeriahi \cite{GZ}. 
Building on Darvar's work, Darvas-Rubinstein \cite{DR} proposed to study the minimizers of $\cK$ over $\cE^1$ and conjectured that a minimizer of $\cK$ over $\cE^1$ is a smooth K\"ahler metric with constant scalar curvature. Their proposal incorporates naturally the properness conjecture of Tian \cite{tian00}, that the existence of CSCK is equivalent to the properness of K-energy and a conjecture of Chen \cite{chen08}, that using the geometry of $\cH_\omega$, in particular the distance of $\cH_\omega$ to define the properness of K-energy.  With such a properness condition, Darvas-Rubinstein proved that there always exists a minimizer of $\cK$ in $\cE^1$. A prominent problem is then to study the regularity of such a minimizer. However in this paper we will only focus on Chen's conjecture from the PDE perspective. While the more general conjecture made by Darvas-Rubinstein (the regularity theory in such general sense) seems to be out of reach at the moment. We should mention that recently Berman-Darvas-Lu  \cite{BDL} proved that a minimizer of K-energy in $\cE^1$ is indeed a smooth CSCK if there exists a CSCK. Their results can be viewed as a generalization of uniqueness of CSCK to the class of $\cE^1$. 

Now we come back to Conjecture \ref{Chen}. When the K\"ahler class is proportional to the canonical class (hence $c_1$ is either positive, zero or negative), this conjecture was verified by Chen-Tian-Zhang(\cite{CTZ}) using the K\"ahler-Ricci flow with rough initial data. More generally, Berman \cite{Berman} proved that in this case, a $K$-energy minimizer in $\cE^1$ is indeed a smooth K\"ahler-Einstein. In both \cite{CTZ} and \cite{Berman}, it is essential that the first Chern class is definite or zero, and the K-energy minimizer satisfies a second order elliptic equation (in weak sense) in nature. 
For a general K\"ahler class Chen's conjecture remains largely open. There are two essential difficulties. 
The first one is that CSCK is of fourth order in nature, with the nonlinearity given by the complex Monge-Ampere operator composed by a metric Laplacian.
The second one is that the potential is only assumed to be K\"ahler current, hence the strict positiveness is missing. In this case it is a tricky question even to have a PDE to work on. In this paper we will study the regularity of a minimizer which is also an $L^\infty$ K\"ahler metric, tackling the first difficulty. 
Hence we assume in addition a $C^{1, \bar 1}$ minimizer satisfies a strict positive condition in $L^\infty$ sense. This assumption allows us to introduce a notion of a weak solution for $R_\phi=\underline{R}$, for an $L^\infty$ K\"ahler metric $\omega_\phi$, using the PDE theory. We write the equation as
\[
R_\phi-\underline{R}=-\Delta_\phi \left(\log \frac{\det(g_{i\bar j}+\phi_{i\bar j})}{\det{g_{i\bar j}}}\right)+g^{i\bar j}_\phi R_{i\bar j}-\underline{R}=0.
\]
This allows us to interpret a weak solution of CSCK for $L^\infty$ K\"ahler metric as the following, that the volume ratio $V_\phi=\log \frac{\det(g_{i\bar j}+\phi_{i\bar j})}{\det{g_{i\bar j}}}$ is a weak solution of 
\[
\Delta_\phi V_\phi=g^{i\bar j}_\phi R_{i\bar j}-\underline{R}
\]
in the distribution sense (see Section 2 for the details).  Our main result is the following regularity for such a weak solution.

\begin{thm}\label{main0}Let $\omega_\phi$ be an $L^\infty$ K\"ahler metric in the class $[\omega]$ such that 
\[
\epsilon \omega\leq \omega_\phi\leq \Lambda \omega,
\]
holds in $L^\infty$ sense with two positive constants $\epsilon<\Lambda$. If $\omega_\phi$ is a weak solution of $R_\phi=\underline{R}$, then $\omega_\phi$ is smooth. Moreover, there are uniform positive constants $c_1=c_1(n, \Lambda)$ and $C=C(n, k, \Lambda)$ such that
\[
c_1 \omega\leq \omega_\phi\leq \Lambda \omega,\; \|\omega_\phi\|_{C^k}\leq C(k, n, \Lambda). 
\]
We emphasize that these estimates are independent of $\epsilon$. 
\end{thm}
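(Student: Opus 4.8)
The plan is to bootstrap regularity starting from the weak equation $\Delta_\phi V_\phi = g_\phi^{i\bar j}R_{i\bar j} - \underline{R}$, where the crucial new input is the announced $W^{2,2}$ regularity for the Laplace equation with merely $L^\infty$ Kähler coefficients. First I would set up the functional framework: since $\omega_\phi$ has $L^\infty$ coefficients with $\epsilon\omega \le \omega_\phi \le \Lambda\omega$, the operator $\Delta_\phi$ is uniformly elliptic with $L^\infty$ coefficients, and $V_\phi = \log(\det(g_{i\bar j}+\phi_{i\bar j})/\det g_{i\bar j}) \in L^\infty$ by the metric bounds. The right-hand side $g_\phi^{i\bar j}R_{i\bar j} - \underline{R}$ lies in $L^\infty$ as well (the $R_{i\bar j}$ are fixed smooth background data). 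Standard De Giorgi--Nash--Moser theory then gives $V_\phi \in C^\alpha$ for some $\alpha \in (0,1)$, but this is far from enough to bootstrap a fourth-order equation. The key step is to invoke the Kähler $W^{2,2}$ estimate: because $\omega$ is Kähler, the equation $\Delta_\phi u = f$ can be rewritten using the complex Monge--Ampère structure so that $\det(g_{i\bar j}+\phi_{i\bar j})$ satisfies, up to lower-order terms, a divergence-form equation whose solutions gain two full derivatives in $L^2$. Concretely, I would apply this to conclude $V_\phi \in W^{2,2}$, hence $\phi \in W^{4,2}$ (since $\Delta_\phi$ applied to $V_\phi$ and the definition of $V_\phi$ in terms of $\det(g+\partial\bar\partial\phi)$ links fourth derivatives of $\phi$ to second derivatives of $V_\phi$).

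Once $\phi \in W^{4,2}$, in complex dimension $n$ I would use Sobolev embedding: if $4 \cdot 2 > 2n$, i.e. $n \le 3$, one gets $\phi \in C^{2,\alpha}$ directly; for higher $n$ one iterates. The mechanism for iteration is: improved regularity of $\phi$ improves the regularity of the coefficients $g_\phi^{i\bar j}$ and $g_\phi^{i\bar j}R_{i\bar j}$, and then Schauder/Calderón--Zygmund estimates for $\Delta_\phi V_\phi = f$ with now-continuous (Hölder) coefficients give $V_\phi \in C^{2,\alpha}$ or $W^{4,p}$, which feeds back into $\phi \in C^{4,\alpha}$, and so on. The bootstrap loop is: $\omega_\phi \in C^{k,\alpha} \Rightarrow$ coefficients of $\Delta_\phi$ in $C^{k,\alpha}$ and RHS in $C^{k,\alpha}$ (for $k\ge 1$) $\Rightarrow V_\phi \in C^{k+2,\alpha}$ by interior Schauder $\Rightarrow \det(g_\phi) \in C^{k+2,\alpha} \Rightarrow$, by the (now-uniformly-elliptic, Hölder-coefficient) complex Monge--Ampère equation $\det(g_{i\bar j}+\phi_{i\bar j}) = e^{V_\phi}\det g_{i\bar j}$ and Evans--Krylov/Schauder, $\phi \in C^{k+4,\alpha}$, i.e. $\omega_\phi \in C^{k+2,\alpha}$. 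Since each pass gains two orders, induction yields $\omega_\phi \in C^\infty$.

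The main obstacle, and the technical heart, is getting the bootstrap started: from only $L^\infty$ coefficients we cannot use Schauder or even $W^{2,p}$ Calderón--Zygmund theory (which fails, as the abstract notes, in dimension $\ge 3$ without oscillation smallness). This is exactly where the Kähler structure must be exploited to obtain the $W^{2,2}$ gain for $\Delta_\phi u = f$. I would expect the proof of that lemma to use that, for a Kähler metric, $\Delta_\phi$ is the trace with respect to $\omega_\phi$ of $\sqrt{-1}\partial\bar\partial$, combined with an integration-by-parts / energy argument: testing the equation against suitable second derivatives and using the Kähler identity $\partial_i g_{j\bar k} = \partial_j g_{i\bar k}$ to control the "bad" commutator terms that otherwise obstruct $W^{2,2}$ estimates for general $a^{ij}\partial^2_{ij}u = f$. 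A subsidiary point to handle carefully is the $\epsilon$-independence of all constants: the lower bound $c_1 = c_1(n,\Lambda)$ on $\omega_\phi$ should come not from $\epsilon$ but from a Harnack/maximum-principle argument applied once enough regularity is available (e.g. from the complex Monge--Ampère equation with $L^\infty$ right-hand side $e^{V_\phi}$, bounded above and below in terms of $\Lambda$ only), so I would isolate that estimate and make sure it uses only $n$ and $\Lambda$.

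Finally, I would note a minor but necessary preliminary step: verifying that the weak-solution definition in Section 2 is robust enough that all the integrations by parts above are legitimate at each stage of the bootstrap (i.e. the distributional identities upgrade to pointwise ones once the regularity is in place), and checking that the local interior estimates patch together on the compact manifold $M$ via a finite cover by coordinate charts, which is where compactness of $M$ enters.
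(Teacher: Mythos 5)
The heart of the matter is that your proposal never addresses the crucial gap the paper identifies: a priori, $V_\phi$ is only a \emph{$\Delta$-weak} (distributional) solution of $\Delta_\phi V_\phi = g_\phi^{i\bar j}R_{i\bar j}-\underline R$, not a $W^{1,2}$ weak solution. With $\phi\in\cH^{1,1}_{w+}$ one only knows $V_\phi\in L^\infty$; there is no reason $V_\phi\in W^{1,2}$, since that would require $\phi\in W^{3,2}$. Your first step---``standard De Giorgi--Nash--Moser theory gives $V_\phi\in C^\alpha$''---silently assumes $V_\phi$ is a $W^{1,2}$ solution of the divergence-form equation, which is exactly what is not known. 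De Giorgi--Nash--Moser does not apply to distributional solutions of a non-divergence equation with $L^\infty$ coefficients; if it did, the problem would already be solved at that line. Similarly, the $W^{2,2}$ theorem (Theorem~\ref{prop2.6}) takes as \emph{input} a $W^{1,2}$ weak solution, so you cannot ``apply it to conclude $V_\phi\in W^{2,2}$'' directly.

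The paper's actual argument circumvents this by a uniqueness-of-distributional-solutions device: one separately constructs (by Lax--Milgram) a $W^{1,2}$ weak solution $u$ of $\Delta_\phi u = g_\phi^{i\bar j}R_{i\bar j}-\underline R$; by Proposition~\ref{prop2.3} this $u$ is also a $\Delta$-weak solution; then Corollary~\ref{weakuniqueness} (which is where the $W^{2,2}$ estimate is really used, to show $L^2$ $\Delta$-weak harmonic functions are constant) forces $V_\phi = u + \text{const}$. Only at that point does $V_\phi$ inherit $W^{1,2}$ regularity, De Giorgi--Nash--Moser gives $V_\phi\in C^\alpha$, and Wang's $C^{2,\alpha}$ theorem for the complex Monge--Amp\`ere equation $\det(g_{i\bar j}+\phi_{i\bar j}) = e^{V_\phi}\det(g_{i\bar j})$ starts the bootstrap. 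A secondary point: once $V_\phi\in C^\alpha$, that is already enough to launch the bootstrap (you do not need $W^{2,2}$ for $V_\phi$ or $W^{4,2}$ for $\phi$); the $W^{2,2}$ estimate's role is to prove the uniqueness lemma, not to directly upgrade $V_\phi$. Your bootstrap loop and your remarks about $\epsilon$-independence via a maximum principle are in the right spirit, but without the uniqueness mechanism the chain never gets started.
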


As a direct application of this regularity, we confirm partly Chen's conjecture. 
\begin{thm}\label{main}If a uniformly $L^\infty$ K\"ahler metric minimizes $K$-energy, then it is a smooth CSCK. 
\end{thm}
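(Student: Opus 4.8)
The plan is to derive Theorem~\ref{main} from Theorem~\ref{main0} by showing that a uniformly $L^\infty$ K\"ahler metric minimizing the $K$-energy is in fact a weak solution of $R_\phi = \underline R$ in the distributional sense introduced in Section~2. Let $\omega_\phi$ be a minimizer with $\epsilon\omega \le \omega_\phi \le \Lambda\omega$ in the $L^\infty$ sense. Since $\cK$ has the Chen \cite{chen00} formula available for $C^{1,\bar 1}$ (indeed $L^\infty$) potentials, one can compute its first variation along a perturbation $\phi_t = \phi + t\psi$ with $\psi \in C^\infty(M)$ (for $|t|$ small, $\omega_{\phi_t}$ remains a bona fide $L^\infty$ K\"ahler metric in the same class by the two-sided bound). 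The minimality gives $\frac{d}{dt}\big|_{t=0}\cK(\phi_t) = 0$ for every such $\psi$. The first step is therefore to write out this Euler--Lagrange identity carefully: formally it is $\int_M \psi\,(R_\phi - \underline R)\,\omega_\phi^n = 0$, but since $R_\phi$ only makes sense distributionally one must split, exactly as in the introduction,
\[
R_\phi - \underline R = -\Delta_\phi V_\phi + g^{i\bar j}_\phi R_{i\bar j} - \underline R,
\]
integrate the $-\Delta_\phi V_\phi$ term against $\psi$ by parts (which is legitimate because $V_\phi \in L^\infty$ and $\omega_\phi$ has $L^\infty$ coefficients, so $\nabla_\phi$-pairings are controlled), and conclude that $V_\phi$ satisfies $\Delta_\phi V_\phi = g^{i\bar j}_\phi R_{i\bar j} - \underline R$ weakly, i.e., $\omega_\phi$ is precisely a weak solution in the sense of Section~2.

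The technical content of this first step is justifying that the first variation of $\cK$ can be taken along smooth perturbations and that it equals the stated pairing when the base potential is only $L^\infty$. I would rely on the Chen--Tian \cite{chen00, CT, CT2} formula decomposing $\cK$ into an entropy part $\int \log(\omega_\phi^n/\omega^n)\,\omega_\phi^n$ and explicit energy-type terms: the energy terms are multilinear in $\phi$ with smooth coefficients and their variation is classical, while the entropy part is convex and lower semicontinuous, and its directional derivative along $\phi_t = \phi + t\psi$ is computed by dominated convergence using the uniform bounds $\epsilon\omega \le \omega_{\phi_t} \le \Lambda\omega$ (which make the integrand and its $t$-derivative uniformly bounded). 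Collecting the variation of all pieces reproduces $-\int_M \psi\,(R_\phi - \underline R)\,\omega_\phi^n$ in the distributional reading above.

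Once $\omega_\phi$ is identified as a weak solution with the required two-sided bound, Theorem~\ref{main0} applies verbatim and yields that $\omega_\phi$ is smooth, with the stated a priori $C^k$ and positivity estimates, and smoothness immediately upgrades the weak equation to the classical one $R_\phi = \underline R$; hence $\omega_\phi$ is a smooth CSCK. I expect the main obstacle to be the rigorous treatment of the entropy term's first variation when $\phi \notin C^4$: one must verify that the formal integration by parts converting $\delta(\text{entropy})$ into the pairing with $-\Delta_\phi V_\phi$ is valid in the distributional framework of Section~2, i.e., that no boundary or regularity defect appears. This is where the hypothesis that $\omega_\phi$ is a \emph{uniformly} $L^\infty$ metric (not merely $C^{1,\bar 1}$) is essential, since it is exactly what allows all the integrals and their $t$-derivatives to be dominated and differentiated under the integral sign; I would phrase this as a lemma establishing that any $L^\infty$-minimizer of $\cK$ satisfies the weak CSCK equation, and then Theorem~\ref{main} is immediate.
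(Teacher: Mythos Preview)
Your proposal is correct and follows essentially the same route as the paper: establish that an $L^\infty$ minimizer of $\cK$ is a $\Delta$-weak solution of $R_\phi=\underline R$ by differentiating Chen's decomposition $\cK=\frac{\underline R}{n+1}\cE-\cE_{Ric}+\int V_\phi\,\omega_\phi^n$ along $\phi+t\psi$, then invoke Theorem~\ref{main0}. The paper records this first step as a separate proposition in Section~2 and states explicitly that Theorem~\ref{main} is a direct consequence of Theorem~\ref{main0}.

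One small clarification: you describe the passage to the distributional equation as ``integrating the $-\Delta_\phi V_\phi$ term by parts.'' In the paper no integration by parts is needed at that stage---the variation of the entropy term $\int V_\phi\,\omega_\phi^n$ along $\phi+t\psi$ produces directly
\[
n\int_M \sqrt{-1}\,\partial\bar\partial\psi \wedge \omega_\phi^{n-1}\,V_\phi = \int_M (\Delta_\phi\psi)\,V_\phi\,\omega_\phi^n,
\]
which is already the $\Delta$-weak pairing. So the worry you flag about justifying an integration by parts with only $L^\infty$ coefficients does not actually arise; the dominated-convergence argument you sketch for differentiating under the integral sign is all that is required.
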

We can also prove similar results for modified $K$-energy, with slight modification of the arguments, that an invariant K\"ahler metric which minimizes modified $K$-energy is smooth, if it is uniformly $L^\infty$.

Previously the authors \cite{HZ} studied the Calabi flow with rough initial data, and proved that the Calabi flow has a unique smooth solution for an $L^\infty$ K\"ahler metric with small oscillation. As an application, we proved Chen's conjecture for a K-energy minimizer if it is an $L^\infty$ metric with small oscillation; in particular, this asserts that if a continuous K\"ahler metrics minimizes K-energy, then it is a smooth CSCK. However, certain smallness assumption seems to be essential for the approach in \cite{HZ}. 
On the other hand, recently Chen-Warren \cite{CW} studied the hamiltonian stationary equation for Lagrangian submanifolds, which is also a fourth order equation, and they proved regularity and removable singularity theorem for such an equation. These together motivate us to study  a regularity and removable singularity theorem for weak solutions of CSCK for $L^\infty$ metrics with certain smallness assumptions.  Indeed we have similar local regularity results as in Chen-Warren \cite{CW} (for hamiltonian stationary equation for Lagrangian submanifolds), with suitable smallness assumptions. 

\begin{thm}Let $Q\subset B=B(1)$ be a compact set with capacity zero. Then there exists $c_n$ such that $u\in C^{1, \bar 1}(B\backslash Q)$ is a weak solution of $R_u=f$ in the sense of \eqref{weak1} in $B\backslash Q$ for some smooth function $f$ such that  \[
(1-c_n)\delta_{i \bar j}\leq u_{i\bar j}(t)\leq (1+c_n)\delta_{i \bar j},
\]
then $u$ is a smooth solution of in $B$.
\end{thm}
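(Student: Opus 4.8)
The plan is to reduce the removable-singularity statement to the global/interior regularity machinery already developed in the paper (Theorem \ref{main0} and the underlying $W^{2,2}$ estimate for $\Delta_g u = f$ with $L^\infty$ coefficients), localized on the ball $B$. First I would set up the weak formulation: on $B \setminus Q$ the function $V_u = \log\det(\delta_{i\bar j} + u_{i\bar j})$ satisfies, in the distributional sense against test functions supported in $B \setminus Q$, the Laplace-type equation $\Delta_u V_u = g_u^{i\bar j} R_{i\bar j} - f = -f$ (the background metric being flat $\delta_{i\bar j}$ here). The smallness hypothesis $(1-c_n)\delta_{i\bar j} \le u_{i\bar j} \le (1+c_n)\delta_{i\bar j}$ guarantees $\omega_u$ is a uniform $L^\infty$ Kähler metric with oscillation controlled by $c_n$, so the coefficient matrix $g_u^{i\bar j}$ has small oscillation — which is exactly the regime where $W^{2,p}$ estimates for $a^{ij}\partial^2_{ij} u = f$ do hold (Caffarelli–Calderón–Zygmund type, or the paper's Kähler $W^{2,2}$ result). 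The point of the capacity-zero hypothesis on $Q$ is that pluripotential-theoretic capacity zero forces $Q$ to be negligible for the relevant Sobolev spaces, so any $W^{1,2}_{\mathrm{loc}}$ (indeed bounded) quantity that solves the PDE away from $Q$ solves it across $Q$.

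The key steps, in order: (1) Show that $u$, being $C^{1,\bar 1}(B\setminus Q)$ with the two-sided Hessian bound, extends to a function on all of $B$ lying in $C^{1,\bar 1}(B)$ in the weak/$L^\infty$ sense — i.e. $u_{i\bar j} \in L^\infty(B)$ with the same bounds — because $Q$ has capacity zero and hence Lebesgue measure zero, and the distributional second derivatives computed on $B\setminus Q$ agree with those on $B$ (no singular part can concentrate on a capacity-zero, hence measure-zero, set for an $L^\infty$-Hessian function; one uses a capacity cutoff $\chi_\delta$ with $\chi_\delta = 0$ near $Q$, $\chi_\delta \to 1$, $\int |\nabla \chi_\delta|^2 \to 0$, to pass the weak equation through). (2) Show the weak Monge–Ampère/scalar-curvature equation holds across $Q$: test the distributional identity $\int_{B\setminus Q} V_u \, \Delta_u \psi \, \omega_u^n = -\int f \psi\,\omega_u^n$ against $\psi \chi_\delta$ and let $\delta \to 0$, using $V_u \in L^\infty$ and the vanishing of the capacity integrals to kill the error terms coming from $\nabla \chi_\delta$. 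This makes $u$ a weak solution of $R_u = f$ on the full ball $B$ in the sense of \eqref{weak1}. (3) Invoke the interior version of the regularity theorem: with the small-oscillation hypothesis, the $W^{2,2}$ (and then $W^{2,p}$, $p$ large) estimate for $\Delta_u V_u = -f$ gives $V_u \in W^{2,2}_{\mathrm{loc}} \cap W^{1,p}_{\mathrm{loc}}$ hence $V_u \in C^{0,\alpha}_{\mathrm{loc}}$, which upgrades $\det(\delta + u_{i\bar j})$ to Hölder, then complex Monge–Ampère regularity (Caffarelli–Kohn–Nirenberg–Spruck / Evans–Krylov in the complex setting) gives $u \in C^{2,\alpha}_{\mathrm{loc}}$, and a standard bootstrap through Schauder theory for the now-uniformly-elliptic fourth order system gives $u \in C^\infty(B)$.

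The main obstacle I expect is Step (2): carefully justifying that the fourth-order equation — not just a linear second-order equation — passes across a capacity-zero set. The subtlety is that $R_u = f$ unravels into $\Delta_u V_u = -f$ only after one knows $V_u$ is regular enough to commute the Laplacian with the logarithm-of-determinant, so there is a chicken-and-egg issue: one needs just enough a priori regularity of $V_u$ near $Q$ to even write the second-order equation, and that regularity itself is what capacity zero must supply. The resolution should be that the weak formulation \eqref{weak1} is set up precisely so that only $u \in C^{1,\bar 1}$ and $V_u \in L^\infty$ enter (the Laplacian is thrown onto the test function by integration by parts), so the capacity cutoff argument applies directly to that formulation without needing intermediate regularity; but one must check the integration-by-parts identities hold with the cutoff $\chi_\delta$ present and that all boundary-type terms on $\partial(B\setminus Q)$ that would live on $Q$ carry a factor $\nabla\chi_\delta$ whose $L^2$ norm tends to zero. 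A secondary technical point is controlling the dependence of $c_n$ through the Calderón–Zygmund constant so that the small-oscillation regime is genuinely attained; this is quantitatively identical to the smallness used in the authors' earlier work \cite{HZ} and in Chen–Warren \cite{CW}, so no new idea is needed, only bookkeeping.
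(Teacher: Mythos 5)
Your plan and the paper's proof diverge at the critical removability step, and the gap in your Step (2) is real. Recall the weak formulation \eqref{weak1}: the Laplacian sits on the \emph{test function}, so when you plug in $\psi\chi_\delta$ you produce a term
\[
\int_B \log\big(\det(u_{i\bar j})\big)\,\det(u_{i\bar j})\,u^{i\bar j}\,\psi\,\p^2_{i\bar j}\chi_\delta,
\]
which pairs an $L^\infty$ quantity against \emph{second} derivatives of the cutoff. Capacity zero in the usual $W^{1,2}$ sense only gives $\|\nabla\chi_\delta\|_{L^2}\to 0$; it says nothing directly about $\p^2\chi_\delta$. To integrate this by parts you would need a priori $W^{1,2}$ control on $\log\det(u_{i\bar j})$ (equivalently $u\in W^{3,2}$) near $Q$, which is exactly the regularity you are trying to establish. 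You flag this as the ``main obstacle'' but the proposed resolution --- that only first-derivative-of-cutoff terms appear --- is not correct for the formulation \eqref{weak1}.

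The paper avoids this circularity with two moves you do not make. First, it establishes smoothness of $u$ on $B\setminus Q$ \emph{before} any removability argument, by applying the local small-oscillation regularity (Theorem~\ref{regularity1}) on small balls $B_p(r)\subset B\setminus Q$ with the rescaled potentials $u_r(x)=r^{-2}u(p+rx)$, which preserve the Hessian pinching; this step is missing from your outline and is what licenses subsequent integrations by parts away from $Q$. Second, using the K\"ahler identity $\p_i\big(u^{i\bar j}\det(u_{i\bar j})\big)=0$, the paper repackages the (fourth-order) equation as a genuine \emph{second-order divergence-form} equation $\p_i\big(u^{i\bar j}\det(u_{i\bar j})\p_{\bar j}\theta\big)=0$ for the single bounded scalar $\theta=\log\det(u_{i\bar j})+\underline{R}\,u$. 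For that class --- a bounded weak solution of a uniformly elliptic divergence-form equation with $L^\infty$ coefficients on a punctured domain, with puncture of zero capacity --- Serrin's removable-singularity theorem \cite{Serrin} applies directly and extends $\theta$ as a weak solution across $Q$. De~Giorgi--Nash--Moser then gives $\theta\in C^\alpha(B)$, the complex Monge--Amp\`ere $C^{2,\alpha}$ theory \cite{wangyu} upgrades $u$, and Schauder bootstrapping finishes. In short: you try to push the fourth-order weak identity across $Q$ by a bare cutoff, which fails at the $\p^2\chi_\delta$ term; the paper first smooths away from $Q$, reduces to a second-order divergence-form problem via the K\"ahler condition, and then invokes Serrin's theorem, whose very formulation (only $\nabla$ on the test function) is what makes the capacity cutoff innocuous.
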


This regularity result is local in nature and we have a direct corollary, 
\begin{cor}[\cite{HZ}]\label{reg1}Given $(M, g)$ and suppose $\psi\in \cH^{1, 1}_{w+}$ is a weak solution of $R_u=\underline{R}$.  There exists a positive dimensional constant $c_0$ such that if  $\|\p\bar \p \psi\|_g\leq c_0$, then $\psi$ is smooth. 
\end{cor}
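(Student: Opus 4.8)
The plan is to deduce this from the preceding theorem by localization, taking the exceptional set to be empty (the empty set trivially has capacity zero). Since smoothness is a local property, it suffices to show that $\psi$ is smooth in a neighborhood of each $p\in M$. So I would fix $p$ and pass to holomorphic normal coordinates $(z^1,\dots,z^n)$ for $g$ centered at $p$, so that $g_{i\bar j}(p)=\delta_{i\bar j}$ and, by the K\"ahler condition, $g_{i\bar j}(z)=\delta_{i\bar j}+O(|z|^2)$ on a coordinate ball; there $\omega$ has a smooth local potential $h$, and $u:=h+\psi$ is a local potential of $\omega_\psi$ with $u_{i\bar j}=g_{i\bar j}+\psi_{i\bar j}$.

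The first step is to turn the pinching hypothesis into the hypothesis of the theorem. Because $g=\delta$ at $p$, comparing the $g$-norm of a $(1,1)$-form with its coordinate entries gives $|\psi_{i\bar j}(p)|\le C_n\|\p\bar\p\psi\|_g(p)\le C_n c_0$ for a dimensional constant $C_n$; for nearby $q$, after shrinking the ball so that $|g_{i\bar j}(q)-\delta_{i\bar j}|<c_n/2$ (a shrinking that depends on the curvature bound of $(M,g)$ but is harmless), the same comparison gives $|\psi_{i\bar j}(q)|\le 2C_n c_0$. Hence, choosing the dimensional constant $c_0:=c_n/(4C_n)$ with $c_n$ the dimensional constant of the theorem, we obtain $(1-c_n)\delta_{i\bar j}\le u_{i\bar j}\le(1+c_n)\delta_{i\bar j}$ on a fixed small ball $B$ around $p$.

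The second step is to observe that in these coordinates the intrinsic scalar curvature of $\omega_\psi$ agrees with the coordinate expression $R_u=-u^{i\bar j}\p_i\p_{\bar j}\log\det(u_{k\bar l})$, so that the global weak equation $R_\psi=\underline R$ unwinds, via the definitions of Section 2, into the distributional identity that $V_u=\log\det(u_{k\bar l})$ solves the associated Laplace equation on $B$; thus $u$ is a weak solution of $R_u=f$ in the sense of \eqref{weak1} on $B$ with $f=\underline R$ a constant, hence smooth, function. Applying the theorem on $B$ with $Q=\emptyset$ then yields that $u$, and therefore $\psi=u-h$, is smooth near $p$; since $p$ is arbitrary, $\psi$ is smooth on $M$.

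The only genuine point of care is the uniformity of the localization — one must be sure that a single dimensional $c_0$ works at every $p$ — which is arranged above by choosing the ball radius first, using compactness of $M$ and the bounded curvature of $g$, and only afterwards fixing $c_0$ in terms of the dimensional constant $c_n$. Checking that the distributional formulation of the weak equation is coordinate-independent and restricts correctly to the chart is routine given Section 2.
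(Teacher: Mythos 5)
Your argument is correct and follows essentially the same route as the paper's own proof of this corollary: localize to a holomorphic chart, write $\omega_\psi$ as $\sqrt{-1}\p\bar\p(h+\psi)$ for a smooth local potential $h$ of $\omega$, shrink the chart so that $h_{i\bar j}$ is close to $\delta_{i\bar j}$ (the paper instead fixes a finite cover and invokes the scale-invariance of $\p\bar\p u$ to pass to the unit ball, but this is equivalent to your normal-coordinates argument), translate the coordinate-free bound $\|\p\bar\p\psi\|_g\leq c_0$ into the matrix pinching $(1-c_n)\delta\leq u_{i\bar j}\leq(1+c_n)\delta$, and apply the local regularity theorem (you take $Q=\emptyset$ in the removable-singularity version; the paper uses Theorem~\ref{regularity1} directly — these are the same thing). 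The only point worth making explicit, which you leave slightly implicit, is the rescaling $u_r(x)=r^{-2}u(p+rx)$ to pass from the shrunken ball to $B(1)$; this preserves $\p\bar\p u$ and the weak formulation and is what the paper records in its removable-singularity proof.
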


The metric $\omega_\psi$ defines an $L^\infty$ K\"ahler metric with $\psi\in \cH^{1, 1}_{w+}$, see Section 2 for precise definitions. Corollary \ref{reg1} was recently proved by the authors, as an application of  the Calabi flow with rough initial data. Note that we do not have any addition geometric assumption on the background metric $g$, except its smoothness. However, it seems to be a hard and subtle problem to prove the local regularity if we do not have any smallness assumption. The essential difficulty is related to the following fact. 
The Caldron-Zygmund theory says if $a^{ij}$ is continuous, then  $W^{2, p}$ regularity holds for the uniform elliptic equation, with $f\in L^p$ for any $p\geq 1$
\[a^{ij}u_{ij}=f. 
\]
The Caldron-Zygmund theory depends on the continuous module of $a^{ij}$. As a comparison, there is no $W^{2, p}$ estimate in general  for $a^{ij}$ uniformly elliptic with only $L^\infty$ coefficients, with a counterexample by Pucci (see \cite{PT}).  We should mention there are generalizations of Caldron-Zygmund theory with weaker assumptions in $a^{ij}$, for example for $a^{ij}\in L^\infty\cap VMO$, Chiarenza-Fascal-Longo \cite{CFL} proved such a $W^{2, p}$ estimates; but clearly certain smallness condition is necessary. 

Hence we need a totally new approach for Theorem \ref{main0}. Our observation to overcome this difficulty, hence to obtain Theorem \ref{main0}, is to realize that the K\"ahler condition plays a very important role. In particular, for smooth K\"ahler metric, we always have
\begin{equation}\label{kcondition}
\p_{a}\left(g^{i\bar j}\det(g_{i\bar j})\right)=0,\; \text{for}\; a=i, \bar j
\end{equation}
The K\"ahler condition essentially allows us to treat the linear equation of the non-divergence form
\begin{equation}\label{w1}
\Delta_g u=g^{i\bar j} u_{i\bar j}=f
\end{equation}
and the divergence form 
\begin{equation}\label{w2}
\Delta_g u=\det(g_{i\bar j})^{-1}\p_i \left(g^{i\bar j} u_{\bar j} \det(g_{i\bar j})\right)=f,
\end{equation}
together even when $g^{i\bar j}$ is merely in $L^\infty$. This observation allows us to build a bridge between (weak) solutions of two equations \eqref{w1} and \eqref{w2}. In particular, we prove that on a compact K\"ahler manifold, we have the desired $W^{2, 2}$ estimate for a weak solution of \eqref{w2} and \eqref{w1}, with only $L^\infty$ coefficients. 

\begin{thm}\label{weak}Let $g=(g_{i\bar j})$ be an $L^\infty$ K\"ahler metric. For $f\in L^2(M)$ such that $\int_M f \omega^n=0$, then there exists a unique solution $W^{2, 2}$ (up to addition of constants) of \eqref{w1} and \eqref{w2}.\end{thm}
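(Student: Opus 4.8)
The plan is to combine an energy/variational argument for existence with the Kähler identity \eqref{kcondition} to upgrade from $W^{1,2}$ to $W^{2,2}$. First I would observe that \eqref{w1} and \eqref{w2} are the \emph{same} equation when $g$ is $L^\infty$ Kähler: writing $\tilde g^{i\bar j} = g^{i\bar j}\det(g_{k\bar l})$, which is an $L^\infty$ coefficient matrix, the Kähler condition \eqref{kcondition} says $\p_a \tilde g^{i\bar j} = 0$ for smooth metrics, hence (after regularizing $g$ by convolution and noting the identity is preserved in the limit, or interpreting $\p_a \tilde g^{i\bar j}=0$ distributionally for the $L^\infty$ metric) the non-divergence operator $g^{i\bar j}u_{i\bar j}$ and the divergence operator $\det(g)^{-1}\p_i(g^{i\bar j}u_{\bar j}\det(g))$ literally agree. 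So it suffices to solve the divergence-form equation \eqref{w2}.

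For existence and uniqueness of a $W^{1,2}$ solution, I would set up the bilinear form $B(u,v) = \int_M g^{i\bar j} u_{\bar j} v_i \, \omega^n$ on the space $\{u \in W^{1,2}(M) : \int_M u\,\omega^n = 0\}$; ellipticity $\epsilon\omega \le \omega_\phi \le \Lambda\omega$ (equivalently uniform bounds on $\tilde g^{i\bar j}$) gives coercivity via the Poincaré inequality on the compact manifold $M$, and the functional $v \mapsto \int_M f v\,\omega^n$ is bounded for $f \in L^2$; Lax–Milgram then produces a unique weak solution $u$ with $\int_M u\,\omega^n = 0$, and adding constants gives all solutions. Uniqueness up to constants follows from coercivity applied to the difference of two solutions.

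The main point — and the step I expect to be the real obstacle — is the interior $W^{2,2}$ bound, i.e.\ showing $\p\bar\p u \in L^2$. Here is where the Kähler structure must be exploited rather than just quoted: for a general $L^\infty$ uniformly elliptic $a^{ij}\p^2_{ij}u=f$, Pucci's example shows $W^{2,2}$ fails in dimension $\ge 3$. The trick I would use is that because $u$ simultaneously solves a non-divergence equation $g^{i\bar j}u_{i\bar j}=f$ and a divergence equation, one can differentiate. Formally, applying $\p_k$ to \eqref{w2} and using $\p_k \tilde g^{i\bar j}=0$ again, $\p_k u$ solves $\det(g)^{-1}\p_i(g^{i\bar j}(\p_k u)_{\bar j}\det(g)) = \p_k f$ in divergence form, so the standard Caccioppoli/difference-quotient energy estimate applied to $\p_k u$ bounds $\int g^{i\bar j}(\p_k u)_{\bar j}\overline{(\p_l u)_i}$ and hence $\|\p\bar\p u\|_{L^2}$ in terms of $\|f\|_{W^{1,2}}$ and lower-order terms. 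Getting this for merely $f\in L^2$ (not $W^{1,2}$) requires a duality/approximation argument: regularize $g$ and $f$, obtain the estimate with constants depending only on $n, \epsilon, \Lambda$ (crucially uniform in the regularization), and pass to the limit; the uniformity is exactly what the Kähler identity buys us, since the divergence and non-divergence forms are controlled by the \emph{same} coefficient matrix $\tilde g^{i\bar j}$ with no oscillation hypothesis needed. I would expect the delicate part to be making the difference-quotient argument legitimate when $g$ itself is only $L^\infty$ — one must differentiate $u$, not $g$ — and confirming that no cross terms involving derivatives of $g$ appear, which is precisely the content of \eqref{kcondition}.
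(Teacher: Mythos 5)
Your setup (bridging divergence and non-divergence forms via the K\"ahler condition, Lax--Milgram for the $W^{1,2}$ existence, smooth approximation of the metric) is broadly in the right spirit, and it is the same philosophy the paper follows. But the key $W^{2,2}$ step is flawed, and the mechanism by which K\"ahlerity saves the day is not the one you propose.

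The gap is a misreading of \eqref{kcondition}. That identity says the \emph{contracted divergences} $\p_i\bigl(g^{i\bar j}\det g\bigr)$ and $\p_{\bar j}\bigl(g^{i\bar j}\det g\bigr)$ vanish (sum over the repeated index). It does \emph{not} say $\p_k\bigl(g^{i\bar j}\det g\bigr)=0$ for a free index $k$, which is what you invoke when you ``apply $\p_k$ to \eqref{w2}.'' Differentiating the divergence-form equation in the $e_k$ direction produces an unavoidable term $\p_i\bigl((\p_k\tilde g^{i\bar j})u_{\bar j}\bigr)$; for an $L^\infty$ metric this is a genuine distributional obstruction, not a lower-order error. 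Even granting the differentiated equation, the resulting Caccioppoli estimate would control $\|\p\bar\p u\|_{L^2}$ by $\|f\|_{W^{1,2}}$, and the promised ``duality'' step to downgrade to $f\in L^2$ is not filled in and is not routine (it would amount to a $W^{2,2}$ estimate for the adjoint, which is the very thing being proved).

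The paper's mechanism is entirely different and requires no differentiation of the equation. On a \emph{compact} K\"ahler manifold one has, for any smooth $v$ and any smooth K\"ahler metric $g$, the exact global identity
\begin{equation*}
\int_M g^{i\bar l}g^{k\bar j}\,v_{i\bar j}\,v_{k\bar l}\;\omega_g^n \;=\; \int_M (\Delta_g v)^2\;\omega_g^n,
\end{equation*}
which is a consequence of Stokes' theorem: both $\sqrt{-1}\p\bar\p v$ and $\omega_g$ are closed, so $\int_M (\sqrt{-1}\p\bar\p v)^2\wedge\omega_g^{n-2}=0$, and the algebraic identity $(\sqrt{-1}\p\bar\p v)^2\wedge\omega_g^{n-2}=c_n\bigl[(\Delta_g v)^2-|v_{i\bar j}|^2_g\bigr]\omega_g^n$ converts this into the displayed equality, with \emph{no} curvature remainder. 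This is where the K\"ahler structure and compactness are genuinely used. Applying it to smooth approximations $\phi_t$ of $\phi$ and $f_t$ of $f$, one gets $\|\p\bar\p v_t\|_{L^2}\le C\|f\|_{L^2}$ with $C$ depending only on the ellipticity bounds $\epsilon,\Lambda$ and $n$ --- no oscillation hypothesis, no $f\in W^{1,2}$. A second integration-by-parts against the fixed smooth background metric then bounds the full real Hessian, and passage to the limit gives $v\in W^{2,2}$ solving both \eqref{w1} and \eqref{w2}. You should replace your differentiate-the-equation step with this integral identity.
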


The standard elliptic theory gives a (unique) $W^{1, 2}$ solution of \eqref{w2}; hence the new part of Theorem \ref{weak} is the $W^{2, 2}$ regularity and $u$ also solves \eqref{w1}. 
Using this regularity of linear equations, we are able to prove a uniqueness theorem for weak solutions of linear equation in distribution sense. We can derive that the volume ratio is indeed $W^{1, 2}$ (and hence $W^{2, 2}\cap C^\alpha$), for a weak solution defined in distribution sense, relying crucially on this uniqueness result. This eventually leads to the proof of  our main regularity Theorem \ref{main0}, with only $L^\infty$ assumption. Another interesting feature is that these estimates are independent of $\epsilon$, the lower bound of $\omega_\phi$.  (Even though we need a strictly positive lower bound of $\omega_\phi$ in $L^\infty$ sense to define a weak solution.) These estimates would be proven by a priori estimates considered in Section 6, where we consider also twisted CSCK. 

In Section 6 we address the difficulty of strict positivity assumption. We study a priori estimate of CSCK and twisted CSCK. Twisted CSCK appears in the work of J. Fine \cite{Fine} (see also Song-Tian \cite{ST} and Stoppa \cite{Stoppa1}); recently Chen \cite{Ch} introduced a new continuity path to approach the existence of CSCK via twisted CSCK metrics. When the first Chern class $c_1$ is positive, this new continuity path reduces to the classical Aubin path for the K\"ahler-Einstein equation. A main advantage of Chen's continuity path is that a twisted CSCK is a minimizer of twisted K-energy, which is always  strictly convex. In particular this would imply that the kernel of the linearized operator (fourth order) is always zero and hence openness holds. For more details and discussion, see Chen \cite{Ch}. Consider the twisted constant scalar curvature equation, 
\begin{equation}\label{CSCK}
t(R_\psi-\underline R)+(1-t)(\tr_{\omega_\psi}\omega-n)=0.
\end{equation}
By the result of the second author \cite{Zeng} and \cite{Hashimoto}, this equation is always solvable for $t$ near zero. Moreover openness holds for this path \cite{Ch} for $t>0$. If the equation has a smooth solution up to $t=1$, the the solution gives a desired CSCK. Hence we can assume that the maximal interval can be solved is $[0, T)$ for some $T\in (0, 1]$ and want to study what happens at the maximal singular time $T$. 
We have the following a priori estimate,

\begin{thm}\label{estimate}When  $t\rightarrow T$, then $\Delta \psi\rightarrow \infty$. In other words, if we assume that $\Delta \psi$ is bounded above, then there exists a constant $C$ such that
\[
\frac{1}{C}\omega\leq \omega_\psi\leq C\omega,\]
where $C$ is a uniform constants depending on $n, \omega, \sup \Delta\psi$. Moreover for $k\geq 3$,
\[
\|\psi\|_{C^k}\leq C_k,
\]
where $C_k$ is a uniform constant depending on $n, \omega, \sup \Delta\psi$ and $k$. 
\end{thm}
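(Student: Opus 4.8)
The plan is to establish Theorem \ref{estimate} as a standard-structure a priori estimate for the twisted CSCK equation \eqref{CSCK}, bootstrapping from the single assumption $\sup_M \Delta\psi \le C_0$ (equivalently, the upper bound $\omega_\psi \le C_0' \omega$ after trace-to-metric conversion via Cauchy--Schwarz on eigenvalues). The first and crucial step is to promote the upper bound to a two-sided bound $\frac{1}{C}\omega \le \omega_\psi \le C\omega$, i.e.\ a lower bound on $\omega_\psi$. The most direct route is to control the volume ratio $V_\psi = \log\frac{\det(g_{i\bar j}+\psi_{i\bar j})}{\det g_{i\bar j}}$ from below. We rewrite \eqref{CSCK} in the divergence/Laplacian form used earlier in the paper, namely
\[
\Delta_\psi V_\psi = g^{i\bar j}_\psi R_{i\bar j} - \underline R \cdot \tfrac{t}{?} + \cdots,
\]
more precisely the twisted analog $t\,\Delta_\psi V_\psi = t\,g^{i\bar j}_\psi R_{i\bar j} - t\underline R + (1-t)(\tr_{\omega_\psi}\omega - n)$, and observe that with $\omega_\psi \le C_0'\omega$ the right-hand side is controlled in $L^\infty$ by $n, \omega, C_0$ (since $g^{i\bar j}_\psi R_{i\bar j}$ is bounded once $\omega_\psi$ has an upper bound and $\tr_{\omega_\psi}\omega$ is bounded below trivially by positivity). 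Then $V_\psi$ satisfies $\Delta_\psi V_\psi \in L^\infty$ with $\omega_\psi$ uniformly bounded above; combined with the normalization $\int_M e^{V_\psi}\omega^n = \int_M \omega_\psi^n = \int_M \omega^n$ fixed, a Moser iteration / Alexandrov maximum principle type argument (using that $\Delta_\psi$ is uniformly elliptic with respect to $\omega$ from above, and a Green's function or capacity estimate to handle the missing lower ellipticity) should give $\inf_M V_\psi \ge -C$, hence $\det \omega_\psi \ge c > 0$, which together with $\omega_\psi \le C_0'\omega$ yields the lower bound $\omega_\psi \ge c'\omega$.

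Once the two-sided bound $c'\omega \le \omega_\psi \le C_0'\omega$ is in hand, the equation \eqref{CSCK} becomes, after taking $\Delta_\psi$ of $V_\psi$, a genuine uniformly elliptic second-order equation for $V_\psi$ with $L^\infty$ coefficients and $L^\infty$ right-hand side, so De Giorgi--Nash--Moser gives $V_\psi \in C^\alpha$. Next I would run the Evans--Krylov / complex Monge--Ampère concavity machinery: $\omega_\psi$ satisfies the complex Monge--Ampère equation $\det(g_{i\bar j}+\psi_{i\bar j}) = e^{V_\psi}\det g_{i\bar j}$ with $V_\psi \in C^\alpha$, so by the Evans--Krylov theorem for the complex Monge--Ampère operator (or the Caffarelli--interior-estimates approach, since concavity of $\log\det$ applies), $\psi \in C^{2,\alpha}$. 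This feeds back: $V_\psi = \log(\det\omega_\psi/\det\omega) \in C^{2,\alpha}$ now, and \eqref{CSCK} reads $\Delta_{\omega_\psi} V_\psi = (\text{explicit }C^\alpha\text{ data})$ with $C^\alpha$ coefficients, so Schauder gives $V_\psi \in C^{2,\alpha}$ improving to $C^{3,\alpha}$ via the fourth-order structure; iterating Schauder on the pair (complex Monge--Ampère equation for $\psi$ in terms of $V_\psi$, linear elliptic equation for $V_\psi$ in terms of $\psi$) bootstraps to $\psi \in C^{k,\alpha}$ for all $k$, with the claimed bounds $\|\psi\|_{C^k} \le C_k(n,\omega,\sup\Delta\psi,k)$. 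The uniformity of every constant on $\sup\Delta\psi$ alone (and $n,\omega$) is automatic because each estimate in the chain only used the two-sided metric bound, which was derived from $\sup\Delta\psi$.

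The contrapositive formulation — if $t_j \to T$ then $\sup\Delta\psi_j \to \infty$ — follows immediately: were $\sup\Delta\psi_j$ bounded along a sequence $t_j \to T$, the above estimates would give uniform $C^{k}$ bounds on $\psi_j$, and by Arzelà--Ascoli a subsequential smooth limit $\psi_T$ solving \eqref{CSCK} at $t = T$; combined with the openness of Chen's continuity path at $T \le 1$ (cited from \cite{Ch}), the solution interval would extend past $T$, contradicting maximality of $[0,T)$.

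\medskip

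The main obstacle I anticipate is the \textbf{first step, the lower bound on $\omega_\psi$ (equivalently the lower bound on $V_\psi$)}. The subtlety is exactly the one flagged in the introduction: $\Delta_\psi$ is only known to be uniformly elliptic \emph{from above} (we have $\omega_\psi \le C_0'\omega$ but a priori $\omega_\psi$ could degenerate), so the standard maximum principle / Moser iteration is not directly available with constants independent of $\epsilon$ or of the a priori (non-uniform) lower bound of $\omega_\psi$. Resolving this is where the $W^{2,2}$ regularity theory of Theorem \ref{weak} and the Kähler identity \eqref{kcondition} must enter — one wants to view $\Delta_\psi V_\psi = (\text{data})$ simultaneously in divergence form \eqref{w2} and non-divergence form \eqref{w1}, obtain $V_\psi \in W^{2,2} \cap C^\alpha$ with bounds depending only on the \emph{upper} bound of $\omega_\psi$, and only then conclude the pointwise lower bound on $V_\psi$. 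This is precisely the new technical input of the paper, and getting the constant to depend only on $n,\omega,\sup\Delta\psi$ (and not on $\epsilon$) is the delicate point; everything after it is a textbook elliptic bootstrap.
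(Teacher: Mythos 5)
Your identification of the lower bound on $\omega_\psi$ as the crucial step is correct, and your Evans--Krylov/Schauder bootstrap afterwards, as well as the contrapositive argument from openness, are both fine. But the argument you propose for the lower bound itself contains a genuine gap, and it is precisely the opposite of what the paper does.

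You claim that once $\omega_\psi \le C_0'\omega$, the right-hand side $g^{i\bar j}_\psi R_{i\bar j} - \underline R + \tfrac{1-t}{t}(\tr_{\omega_\psi}\omega - n)$ is controlled in $L^\infty$, citing that $g^{i\bar j}_\psi R_{i\bar j}$ is bounded and that $\tr_{\omega_\psi}\omega$ is ``bounded below trivially by positivity.'' Both of these remarks misfire. An upper bound $\omega_\psi\le C_0'\omega$ gives a \emph{lower} bound on the inverse metric $g^{i\bar j}_\psi$, not an upper bound: if the eigenvalues of $\omega_\psi$ approach zero then $g^{i\bar j}_\psi$ blows up, so both $g^{i\bar j}_\psi R_{i\bar j}$ and $\tr_{\omega_\psi}\omega$ are \emph{unbounded above}, which is exactly the regime one is trying to rule out. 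Thus the $L^\infty$ control of the right-hand side presupposes the very lower bound on $\omega_\psi$ that you are after; the proposed Moser iteration/Alexandrov step is circular. Relatedly, your diagnosis that Theorem \ref{weak} and the K\"ahler identity \eqref{kcondition} must enter is off target: that machinery is designed for $L^\infty$ \emph{weak} solutions (Theorem \ref{main0}), whereas here $\psi$ is a \emph{smooth} solution on the continuity path at each $t<T$, so the pointwise maximum principle is available without any weak-regularity theory, and the a priori constants are then simply checked to depend only on $n,\omega,\sup\Delta\psi$.

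The paper's actual mechanism turns the unboundedness of $\tr_{\omega_\psi}\omega$ into an asset rather than an obstacle. After normalizing $\int_M\psi=0$ (so that $\sup\Delta\psi<\infty$ and $n+\Delta\psi>0$ give $\|\psi\|_{L^\infty}\le C$), one adds a multiple of $\psi$ and rewrites, for a sufficiently large constant $A$,
\[
-t\,g^{i\bar j}_\psi\p^2_{i\bar j}\Bigl(\log\frac{\omega_\psi^n}{\omega^n}+A\psi\Bigr)
= t\underline R + (1-t+At)\tr_{\omega_\psi}\omega - n(1-t+At) + t\,\tr_{\omega_\psi}(\mathrm{Ric}).
\]
At an interior minimum $p$ of $\log\frac{\omega_\psi^n}{\omega^n}+A\psi$ the left side is $\le 0$, forcing the right side $\le 0$ at $p$. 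But for $A$ large the coefficient $(1-t+At)$ is bounded below by a positive constant uniformly in $t\in[\delta,1]$, and $\tr_{\omega_\psi}\omega = \sum_i\lambda_i^{-1}$ dominates any terms of the form $t\,\tr_{\omega_\psi}(\mathrm{Ric})$ which are also of order $\lambda_i^{-1}$; hence the inequality forces the eigenvalues $\lambda_i(p)$ of $\omega_\psi$ with respect to $\omega$ to be bounded below, so $\omega_\psi^n\ge C^{-1}\omega^n$ at $p$. Since $\psi$ is uniformly bounded, the minimum value of $\log\frac{\omega_\psi^n}{\omega^n}+A\psi$ is then bounded below, hence $\omega_\psi^n\ge C^{-1}\omega^n$ everywhere; together with the assumed upper bound $\Delta\psi\le C_0$ this yields $C^{-1}\omega\le\omega_\psi\le C\omega$. (For $t<\delta$ the bound comes from the openness/implicit function theorem at $t=0$.) You should replace your circular first step by this pointwise maximum-principle argument; your subsequent bootstrap can then be kept as is.
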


Hence one can solve the equation \eqref{CSCK} if one can show that $\Delta \psi$ is uniformly bounded for all $t$. A point to make is that we do not assume that $\omega_\psi$ is uniformly bounded away from zero, which can be obtained via the equation. Indeed by a rather straightforward maximum principle argument,  we obtain a definite lower bound of log-volume ratio. 
With $C^{1, \bar 1}$ assumption, this gives the desired strict positive. This should provide technical evidence to support Chen's conjecture. However, our result is merely \emph{a priori} estimate, there is subtlety to prove a regularity result to achieve such a positivity estimate for $C^{1, \bar 1}$ $K$-energy minimizers. We will address this difficulty in future work. \\

{\bf Acknowledgement:} Both author thank Prof. Xiuxiong Chen  for sharing his insight on this problem and for valuable suggestions and encouragements. The first author wants to thank Prof. Jingyi Chen and Micah Warren for valuable discussions of their work \cite{CW}, which motivates us to consider the local regularity for weak solution of constant scalar curvature equation in Section 3. The first author is support in part by an NSF grant, award no. 1611797. 

\section{Weak solutions of scalar curvature equation}
In this section we discuss various weak solutions of CSCK. First we introduce several norms on $\cH_\omega$
Define the weak $C^{1, 1}_w$ norm by
\[
\|\phi\|_{C^{1, 1}_w}=\|\phi\|_{L^\infty}+\|\p\bar \p \phi\|_{L^\infty}.
\]
We consider two extensions of the space of K\"ahler potentials,
\[
\cH^{1, 1}_{w}=\{\phi: \omega_\phi\geq 0, \|\phi\|_{C^{1, 1}_w}<\infty\}. \; \cH^{1, 1}_{w+}=\{\phi: \text{essinf}\;\omega_\phi>0, \|\phi\|_{C^{1, 1}_w}<\infty\}. 
\]
The difference is that we require for $\cH^{1, 1}_{w+}$, the positivity $\text{essinf}\; \omega_\phi>0$ holds as an $L^\infty$ matrix in local charts. In other words, a K\"ahler potential in $\cH^{1, 1}_{w+}$ defines an $L^\infty$ K\"ahler metric in $[\omega]$, while $\cH^{1, 1}_w$ only requires the current $ \omega_\phi$ is positive.

We first consider several notions of weak solutions of the Laplacian equation, $\Delta_\omega u=f,$ in particular we allow $\omega$ to be an $L^\infty$ metric on $M$. 
We will use these notions to define weak solutions of CSCK. 
The following weak solution is standard, and we recall its definition, 

\begin{defn}A $W^{1, 2}$ weak solution $u$ of $\Delta_\omega u=f$, for $f\in L^2$ if for any smooth (or $W^{1, 2}$) test function $v$, we have
\begin{equation}\label{E1}
\int_M \left(g^{i\bar j} \p_i u \p_{\bar j} v +fv\right) =0,
\end{equation}
where the integration is taken with respect to the volume form $dvol_g=\omega^n (n!)^{-1}$.
\end{defn}
This is  the well-known weak solutions of elliptic equations in divergence form, locally it can be written as \eqref{w2}. 
We need to also consider a notion of weak solution as follows, 
\begin{defn}\label{delta}Let $u, f\in L^1$. If for any smooth (or $W^{2,\infty}$) test function $v$, we have 
\begin{equation}\label{E2}
\int_M \left(g^{i\bar j} \p_i \p_{\bar j} v\right) u=\int_M f v
\end{equation}
then we call $u$ is a $\Delta$-weak solution of $\Delta_\omega u=f$. 
\end{defn}

In other words, $\Delta$-weak solution is defined in the sense of distribution. 
If $u, f$ has stronger regularity, for example $u\in L^\infty$, we can require test functions $v\in W^{2, p}$ for any $p\geq 1$ via an approximation by smooth functions. 
We emphasize that when $\omega$ (or $g$) is an $L^\infty$ metric, the definition of the weak solutions as above makes sense. 

Now we  consider two versions of weak solution of the constant scalar curvature metric, using the linear equations as above.
Let $V_\phi$ be the volume ratio given by $$V_\phi=\log \frac{\det(g_{i\bar j}+\phi_{i\bar j})}{\det(g_{i\bar j})}.$$
The scalar curvature equation can be written as
\begin{equation}\label{scalar}
R_\phi=-\Delta_\phi V_\phi+g^{i\bar j}_\phi R_{i\bar j}=f,
\end{equation}
where $R_{i\bar j}=-\p_i\p_{\bar j}\log\det(g_{i\bar j})$ is the Ricci curvature of $\omega$.

\begin{defn}First assume that $\phi\in \cH^{1, 1}_{w+}\cap W^{3, 2}$, then $V_\phi\in W^{1, 2}$. Then $\phi$ is called a standard weak solution if $V_\phi$ is a $W^{1, 2}$-solution of \eqref{scalar} with respect to $\omega_\phi$. That is, for any smooth testing function $v$, we have
\begin{equation}
\int_M \left(g^{i\bar j}_\phi \p_i V_\phi \p_{\bar j} v+g^{i\bar j}_\phi R_{i\bar j} v-fv\right)dvol_\phi=0. 
\end{equation}
\end{defn}

Then the following regularity result is rather standard, using well-known regularity theory in the field.

\begin{prop}\label{psmooth}Suppose $\phi\in \cH^{1, 1}_{w+}\cap W^{3, 2}$ is a standard weak solution of $R_\phi=\underline{R}$, then $\phi$ is smooth and it defines a smooth K\"ahler metric with constant scalar curvature. 
\end{prop}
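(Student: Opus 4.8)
The plan is to bootstrap regularity for $\phi$ starting from the fact that $\phi \in \cH^{1,1}_{w+}\cap W^{3,2}$ already defines an $L^\infty$ Kähler metric $\omega_\phi$ with $\epsilon\omega \le \omega_\phi \le \Lambda\omega$, and that $V_\phi \in W^{1,2}$ is a $W^{1,2}$-weak solution of the linear divergence-form equation $\Delta_{\omega_\phi} V_\phi = f - g^{i\bar j}_\phi R_{i\bar j}$ with right-hand side in $L^2$ (since $R_{i\bar j}$ is smooth and $g^{i\bar j}_\phi \in L^\infty$). The first step is to invoke the linear $W^{2,2}$ regularity Theorem~\ref{weak}: because $g_{i\bar j}$ is Kähler (here $L^\infty$ Kähler with two-sided bounds), the weak solution of the divergence-form equation is in fact in $W^{2,2}$, and consequently, by Sobolev embedding in complex dimension $n$ (or by the $C^\alpha$ estimate that accompanies Theorem~\ref{weak}), $V_\phi \in C^\alpha$ for some $\alpha>0$. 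Exponentiating, $e^{V_\phi} = \det(g_{i\bar j}+\phi_{i\bar j})/\det(g_{i\bar j})$ is Hölder continuous and bounded away from $0$ and $\infty$.

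The second step is to feed this back into the complex Monge–Ampère equation. The potential $\phi$ satisfies $(\omega + \sqrt{-1}\partial\bar\partial\phi)^n = e^{V_\phi}\omega^n$ with $e^{V_\phi} \in C^\alpha$ and $\omega_\phi$ already known to be an $L^\infty$ metric with uniform two-sided bounds. By the Evans–Krylov / Caffarelli interior estimates for the complex Monge–Ampère equation with Hölder right-hand side (in the form due to Caffarelli, or the complex version used widely in this area), $\phi \in C^{2,\alpha}$; then $\omega_\phi$ is a $C^\alpha$ Kähler metric and the coefficients $g^{i\bar j}_\phi$ are genuinely Hölder. Now the equation for $V_\phi$ becomes a genuine (non-divergence) linear elliptic equation $\Delta_{\omega_\phi} V_\phi = f - g^{i\bar j}_\phi R_{i\bar j}$ with $C^\alpha$ coefficients and smooth data, so Schauder theory gives $V_\phi \in C^{2,\alpha}$.

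The third step is the standard bootstrap. With $V_\phi \in C^{2,\alpha}$, the right-hand side $e^{V_\phi}$ of the Monge–Ampère equation is $C^{2,\alpha}$, so $\phi \in C^{4,\alpha}$ by Schauder for Monge–Ampère; this improves the regularity of $g^{i\bar j}_\phi$ to $C^{2,\alpha}$, hence $V_\phi \in C^{4,\alpha}$, and so on. Iterating, $\phi \in C^\infty$, and since $\omega_\phi$ was strictly positive in the $L^\infty$ sense and now continuous, it is a smooth Kähler metric; the equation $R_\phi = \underline R$ then holds in the classical sense, so $\omega_\phi$ is a smooth CSCK metric.

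I expect the genuine content — the step that is not purely "standard regularity theory in the field" — to be the very first one: the passage from a $W^{1,2}$-weak solution of the divergence-form Laplace equation with $L^\infty$ Kähler coefficients to $W^{2,2}$ (and hence $C^\alpha$) regularity of $V_\phi$. This is exactly where Theorem~\ref{weak} and the Kähler identity \eqref{kcondition} are essential, since for general uniformly elliptic $L^\infty$ coefficients no such $W^{2,p}$ gain holds (Pucci's counterexample). Once $V_\phi$ is Hölder, everything downstream is the classical Evans–Krylov–Schauder bootstrap for complex Monge–Ampère coupled with a linear Laplace equation, which is why this proposition is labelled "rather standard" relative to the harder Theorem~\ref{main0}, where one does not even assume $\phi \in W^{3,2}$ and must instead derive $V_\phi \in W^{1,2}$ from the distributional formulation via a separate uniqueness argument for the linear equation.
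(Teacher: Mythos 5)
Your bootstrap structure in steps two and three (Hölder volume ratio $\Rightarrow$ $C^{2,\alpha}$ via the complex Monge--Ampère theory $\Rightarrow$ Schauder iteration) matches the paper exactly, and you correctly identify why this proposition is ``rather standard'' while Theorem~\ref{main0} is hard. But your first step --- the passage from $V_\phi\in W^{1,2}$ to $V_\phi\in C^\alpha$ --- is both an unnecessary detour and, as written, has a gap.

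You route through Theorem~\ref{weak} to get $V_\phi\in W^{2,2}$ and then try to conclude Hölder continuity ``by Sobolev embedding in complex dimension $n$.'' This fails as soon as $n\ge 2$: the real dimension is $2n$, and $W^{2,2}$ embeds into $C^\alpha$ only when $2 - 2n/2 > 0$, i.e.\ $n=1$. The parenthetical fallback (``the $C^\alpha$ estimate that accompanies Theorem~\ref{weak}'') is not something the paper actually proves; Theorem~\ref{weak} delivers $W^{2,2}$, not Hölder. So as stated your proof does not establish $V_\phi\in C^\alpha$ in the relevant dimensions.

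The paper's route is shorter and does not use the Kähler condition at all for this step. Since $\phi\in W^{3,2}$ is assumed, $V_\phi$ is honestly a $W^{1,2}$ weak solution of the \emph{divergence-form} equation \eqref{E1} with uniformly elliptic $L^\infty$ coefficients (and $L^\infty$ right-hand side, as $g^{i\bar j}_\phi R_{i\bar j}$ is bounded). The classical De~Giorgi--Nash--Moser theory for divergence-form equations then gives interior $C^\alpha$ directly from $W^{1,2}$, with no $W^{2,2}$ intermediate and no appeal to the Kähler identity \eqref{kcondition}. Theorem~\ref{weak} and the Kähler structure only become indispensable in Theorem~\ref{main0}, where one starts from a $\Delta$-weak solution in the distributional sense and must first \emph{produce} the $W^{1,2}$ formulation via the uniqueness argument (Corollary~\ref{weakuniqueness}). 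Under the hypotheses of this proposition, that machinery is not needed, and invoking it only weakens the argument by introducing the Sobolev-embedding issue.
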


\begin{proof}Since $\phi$ defines an $L^\infty$ K\"ahler metric, by the De Giorgi-Nash-Moser theory, a $W^{1, 2}$ solution of the form \eqref{E1} is locally H\"older continuous. Hence this implies that $V_\phi\in C^{\alpha}$, which in turn implies that $\phi\in C^{2, \alpha}$, by the theory of complex Monge-Ampere equation \cite{wangyu}. Then rather standard Schauder theory and boot-strapping argument imply that $\phi$ is smooth. 
\end{proof}

Now we drop the assumption that $\phi\in W^{3, 2}$. We define the weak solution, using the $\Delta$-weak solution of the linear equation above. 
\begin{defn}
We define a weak solutions of $R_\phi=f$ for $\phi\in \cH^{1, 1}_{w+}$, if for any function $\psi\in C^{1, 1}$, we have
\[
-\int_M V_\phi \Delta_\phi \psi \omega^n_\phi=\int_M \psi (f- g^{i\bar j}_\phi R_{i\bar j})\omega^n_\phi.
\]
In other words, $V_\phi$ is a $\Delta$-weak solution, with respect to $L^\infty$ metric $\omega_\phi$, 
\[
\Delta_\phi V_\phi=g^{i\bar j}_\phi R_{i\bar j}-f.
\]
\end{defn}

Clearly, the definition can be made local by requiring that  $\psi$ has compact support. 
We will simply refer a weak solution of constant scalar curvature equation to the above definition. 
We would like to ask the following,
\begin{prob}\label{p1}
Suppose $\phi\in \cH^{1, 1}_{w+}$ with bounded $C^{1, 1}_w$ norm and $\phi$ is a weak solution of $R_\phi=\underline{R}$. Is $\phi$ smooth?
\end{prob}

To answer this question, one needs to show that $V_\phi$, as a $\Delta$-weak solution, is indeed a $W^{1, 2}$-solution by Proposition \ref{regularity}. 
This question is directly related to Chen's conjecture \cite{chen08}. To see the relation, we have the following simple observation, 

\begin{prop}
If $u\in \cH^{1, 1}_{w+}$ is a weak solution of the constant scalar curvature equation, then it is a critical point of $K$-energy (among smooth variations). In particular, a K-energy minimizer (or critical point) in 
$\cH^{1, 1}_{w+}$ is a weak solution.
\end{prop}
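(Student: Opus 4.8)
The plan is to compute the first variation of the $K$-energy along a smooth path $\phi_t \in \cH_\omega$ with $\phi_0 = u$ and $\dot\phi_0 = \psi$ for an arbitrary $\psi \in C^\infty(M)$, and to recognize the resulting expression as precisely the defining identity of a weak solution applied to the test function $\psi$. The subtlety is that $u$ itself is only in $\cH^{1,1}_{w+}$, so the path $\phi_t$ must be chosen to perturb only by a smooth increment, i.e. $\phi_t = u + t\psi$ (or, more safely, $\phi_t = u + \chi(t)$ with $\chi(0)=0$, $\dot\chi(0)=\psi$ smooth), and one must check that $\omega_{\phi_t} = \omega_u + \sqrt{-1}\,t\,\partial\bar\partial\psi$ still lies in $\cH^{1,1}_{w+}$ for $|t|$ small, which is immediate from $\mathrm{essinf}\,\omega_u > 0$ and $\|\partial\bar\partial\psi\|_{L^\infty} < \infty$.

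First I would recall that Chen \cite{chen00} showed $\cK$ extends to a well-defined functional on $C^{1,\bar1}$ (indeed $\cH^{1,1}_{w+}$) potentials, and that along the smooth increment $t \mapsto \cK(u + t\psi)$ the functional is differentiable in $t$ with derivative given by the Mabuchi formula. Concretely, using $V_\phi = \log(\det(g_{i\bar j} + \phi_{i\bar j})/\det g_{i\bar j})$, the $K$-energy can be written (Chen's formula) as the sum of an entropy-type term $\int_M V_\phi\, \omega_\phi^n$ and lower-order terms linear in $\phi$, and a direct computation of $\frac{d}{dt}\big|_{t=0}$ yields
\[
\frac{d}{dt}\Big|_{t=0} \cK(u + t\psi) = -\int_M \psi\,(R_u - \underline{R})\,\omega_u^n
= \int_M \psi\,\big( \underline{R} - g^{i\bar j}_u R_{i\bar j} + \Delta_u V_u \big)\,\omega_u^n,
\]
where the last equality uses only the pointwise identity $R_u = -\Delta_u V_u + g^{i\bar j}_u R_{i\bar j}$, which holds in the distributional sense since the right-hand side is exactly the object appearing in the definition of a weak solution. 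The step where care is genuinely needed is justifying that the term $\int_M V_\phi\,\omega_\phi^n$ is differentiable in $t$ and that its derivative produces the $\Delta_u V_u$ contribution \emph{paired against $\psi$ via the distributional Laplacian}; this is precisely where the integration by parts $\int_M V_u\,\Delta_u\psi\,\omega_u^n$ enters — and that pairing makes sense for $\psi \in C^{1,1}$ and $V_u \in L^1$ exactly because $\omega_u$ is an $L^\infty$ Kähler metric, so the $\Delta$-weak formulation of Definition~\ref{delta} applies.

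Assembling: if $u$ is a weak solution of $R_u = \underline{R}$ in the sense of the last definition, then by that definition $-\int_M V_u\,\Delta_u\psi\,\omega_u^n = \int_M \psi\,(\underline{R} - g^{i\bar j}_u R_{i\bar j})\,\omega_u^n$ for every $\psi \in C^{1,1}$, and substituting this into the variation formula above gives $\frac{d}{dt}\big|_{t=0}\cK(u+t\psi) = 0$ for all smooth $\psi$; hence $u$ is a critical point of $\cK$ among smooth variations. The final sentence of the statement — that a $K$-energy minimizer in $\cH^{1,1}_{w+}$ is a weak solution — follows by reversing the computation: a minimizer has $\frac{d}{dt}\big|_{t=0}\cK(u+t\psi) = 0$ for all smooth $\psi$ (the one-sided derivatives from above and below must both vanish at a minimum, and the map is differentiable), so the variation formula forces $-\int_M V_u\,\Delta_u\psi\,\omega_u^n = \int_M \psi\,(\underline{R} - g^{i\bar j}_u R_{i\bar j})\,\omega_u^n$ for all smooth $\psi$, and then a density argument extends this to all $\psi \in C^{1,1}$, which is exactly the assertion that $V_u$ is a $\Delta$-weak solution of $\Delta_u V_u = g^{i\bar j}_u R_{i\bar j} - \underline{R}$. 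The main obstacle is thus not conceptual but the careful bookkeeping of the first-variation computation of Chen's $C^{1,\bar1}$ formula for $\cK$ — verifying term-by-term differentiability in $t$ and that every integration by parts is legitimate at the regularity available (test functions only $C^{1,1}$, potential only $\cH^{1,1}_{w+}$), which is where the $L^\infty$-Kähler structure is used.
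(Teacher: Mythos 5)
Your proposal is correct and follows essentially the same route as the paper: compute the first variation of $\cK$ along a smooth linear perturbation $u+t\psi$ using Chen's $C^{1,\bar 1}$ formula for the $K$-energy, and recognize that setting this variation to zero for all smooth $\psi$ is literally the defining identity that $V_u$ is a $\Delta$-weak solution of $\Delta_u V_u = g_u^{i\bar j}R_{i\bar j}-\underline R$.

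One presentational caution: the intermediate equality
\[
\frac{d}{dt}\Big|_{t=0}\cK(u+t\psi)= -\int_M \psi\,(R_u-\underline R)\,\omega_u^n
\]
does not literally make sense when $u$ is only $\cH^{1,1}_{w+}$, since $R_u$ is not a pointwise function. The paper writes the first variation directly in the form
\[
\delta\cK = \underline R\int_M\psi\,\omega_u^n - n\int_M\psi\,\mathrm{Ric}\wedge\omega_u^{n-1} + n\int_M \sqrt{-1}\,\partial\bar\partial\psi\wedge\omega_u^{n-1}\,\log\frac{\omega_u^n}{\omega^n},
\]
which only involves $V_u=\log(\omega_u^n/\omega^n)\in L^\infty$ and $\Delta_u\psi$ paired weakly, and presents the $R_u$ expression only as what this reduces to when $\omega_u$ is smooth. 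You do acknowledge this (``holds in the distributional sense''), and you correctly identify that the real content is that the derivative of the entropy term produces $\int_M V_u\,\Delta_u\psi\,\omega_u^n$ rather than anything requiring an actual Laplacian of $V_u$; but the cleanest presentation simply never introduces $R_u$ at all once $u$ leaves $\cH_\omega$.
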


\begin{proof}
Recall Chen's formula for $\cK$-energy, 
\begin{equation}
\cK(u)=\frac{\underline R}{n+1}\cE(u)-\cE_{Ric} (u)+\int_M \log\frac{\omega_u^n}{\omega^n} \omega_u^n,
\end{equation}
where $\cE(u)$ and $\cE_{Ric}(u)$ are two well-known energy functionals, given by
\[
\cE(u)=\frac{1}{V}\sum_{j=0}^n\int_M u \omega_u^j\wedge \omega^{n-j}\;\;; \cE_{\alpha} (u)=\frac{1}{V}\int_M u\alpha\wedge\omega_u^j\wedge \omega^{n-j-1}
\]
One can compute the first variation of $\cK$ by the following,
\begin{equation}\label{Eweak}
\delta \cK=\underline{R}\int_M \psi  \omega^n_u-n\int_M \psi Ric\wedge \omega^{n-1}_u+n\int_M \sqrt{-1}\p\bar\p \psi \wedge \omega_u^{n-1} \log\frac{\omega_u^n}{\omega^n}
\end{equation}
If $\omega_u$ defines a smooth K\"ahler metric, then we can continue to compute
\[
\delta \cK=-\int_M \psi (R_u-\underline R)\omega_u^n
\]
Hence it leads to the Euler-Lagrangian equation,
\[
R_u=\underline{R}.  
\]
When $\omega_u$ is only an $L^\infty$ K\"ahler metric, it leads to a weak solution in distribution sense (called $\Delta$-weak solution) of $R_u=\underline R$, for any $\psi\in C^\infty$, 
\begin{equation}
\int_M \Delta_u \psi  \log\frac{\omega_u^n}{\omega^n} \omega_u^n=\int_M \psi \left(\tr_{\omega_u} Ric-\underline{R}\right)\omega_u^n 
\end{equation}
\end{proof}

We will answer Problem \ref{p1} affirmatively in Section 4.  By Proposition \ref{psmooth}, the key is to show that the volume ratio $V_\phi$ is in $W^{1, 2}$. In next section we will first prove that given a smallness condition, a purely local  $W^{1, 2}$  regularity for $V_\phi$.

\section{Local regularity with smallness and removable singularities}

In this section we  consider the regularity problem purely in local, that we have a metric defined in an Euclidean ball $B(1)$, by $\left(u_{i\bar j}\right)>0.$
We can define a weak solution of 
\[
R_u=f
\] for $u_{i\bar j}\in L^\infty$,  given any smooth functions $\psi$ with compact support in $B(1)$,

\begin{equation}\label{weak1}
-\int_B \log (\det (u_{i\bar j})) \det(u_{i\bar j}) u^{i\bar j}\p^2_{i\bar j}(\psi)= \int_B \psi f \det(u_{i\bar j})
\end{equation}

We introduce some notations.  We will use both real and complex coordinate. For example, we use $u_i, u_{\bar j}$ to denote derivatives in complex variable $\p_{z_i} u, \p_{\bar z_j}u$,  $(1\leq i, j\leq n)$, and $u_\alpha$  to denote $\p_{x_\alpha} u$, $1\leq \alpha\leq 2n$, to denote derivatives in real variables $x=(x_\alpha)$. 
For a function $f$, we denote the difference quotient 
\[\delta^h_{\alpha} f= \frac{f(x+he_\alpha)-f(x)}{h}\]

Then we have the following,
\begin{thm}\label{regularity1}
There exists a constant $c_n=c(n)$, which can be computed explicitly such that, if $(1-c_n)\delta_{i\bar j}\leq u_{i\bar j}\leq (1+c_n)\delta_{i\bar j}$, then $u\in W^{3, 2}_{loc}$ and in particular, $u$ is a smooth solution of $R_u=f$ if $f$ is smooth.
\end{thm}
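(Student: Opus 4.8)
The plan is to bootstrap from the weak formulation \eqref{weak1} by testing against difference quotients, exploiting the near-Euclidean hypothesis $(1-c_n)\delta_{i\bar j}\le u_{i\bar j}\le (1+c_n)\delta_{i\bar j}$ to absorb error terms. Write $v = \log\det(u_{i\bar j})$ for the volume ratio (relative to the flat metric) and $a^{i\bar j} = \det(u_{i\bar j})\,u^{i\bar j}$; the Kähler/cofactor identity \eqref{kcondition} gives $\p_i a^{i\bar j} = 0 = \p_{\bar j} a^{i\bar j}$ in the weak sense, so \eqref{weak1} says precisely that $v$ is simultaneously a $\Delta$-weak solution of $a^{i\bar j}\p^2_{i\bar j}v = \tilde f := f\det(u_{i\bar j})$ in non-divergence form and, by moving the derivatives, a $W^{1,2}$-weak solution $\p_{\bar j}(a^{i\bar j}\p_i v) = \tilde f$ in divergence form. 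Standard De Giorgi–Nash–Moser theory for the divergence form then gives $v\in C^\alpha_{loc}$ and, crucially, $v\in W^{1,2}_{loc}$ with a bound depending only on $n$, $\|\tilde f\|$ and the ellipticity. Since $v\in C^\alpha$, the complex Monge–Ampère equation $\det(u_{i\bar j}) = e^v$ with $v\in C^\alpha$ gives $u\in C^{2,\alpha}_{loc}$ by Caffarelli/Wang's $C^{2,\alpha}$ theory, hence $a^{i\bar j}\in C^{1,\alpha}_{loc}$ is now genuinely continuous (indeed Hölder), and classical Schauder plus bootstrapping upgrades $u$ to $C^\infty$ once we know $u\in W^{3,2}_{loc}$ (or directly $C^{2,\alpha}$ suffices to restart Schauder on the fourth-order equation).

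The genuinely new work is to get past the point where classical theory takes over without continuity of the coefficients, i.e. to prove $v\in W^{1,2}_{loc}$ directly from the $L^\infty$ data. I would do this by the difference-quotient method: fix a cutoff $\eta$, plug $\psi = \delta^{-h}_\alpha(\eta^2\,\delta^h_\alpha v)$ (suitably regularized, since a priori $v$ is only $L^\infty$) into the divergence-form weak identity, and use discrete integration by parts. The leading term produces $\int \eta^2 a^{i\bar j}\p_i(\delta^h_\alpha v)\p_{\bar j}\overline{(\delta^h_\alpha v)}$, which by the lower bound $a^{i\bar j}\ge (1+c_n)^{-(n-1)}(1-c_n)\,\delta^{i\bar j}$ (cofactor of a near-identity Hermitian matrix) controls $\int \eta^2|\nabla \delta^h_\alpha v|^2$ from below. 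The difference quotient of the coefficients $\delta^h_\alpha a^{i\bar j}$ is \emph{not} controlled in $L^\infty$ — this is exactly where a general $L^\infty$ equation would fail — but here we never need to differentiate $a^{i\bar j}$: because the equation is in divergence form with $\tilde f$ smooth, the only error terms are $\int (\nabla\eta)(\cdots)$ and $\int \tilde f\,\psi$, both estimated by Cauchy–Schwarz against the good term with a small constant times $\int\eta^2|\nabla\delta^h_\alpha v|^2$ plus $C\int|\delta^h_\alpha v|^2 \le C\|v\|_{L^\infty}^2$ (after suitable small-constant absorption, which is where the explicit smallness $c_n$ enters to keep the absorbed constant $<1$). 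Letting $h\to 0$ yields $v\in W^{1,2}_{loc}$, and then a second difference-quotient step on the divergence equation — now legitimate since $v\in W^{1,2}$ and $\nabla v$ can be differenced — combined with the $C^{2,\alpha}$ regularity of $u$ already obtained gives $v\in W^{2,2}_{loc}$ and $u\in W^{3,2}_{loc}$.

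The main obstacle I anticipate is the first difference-quotient estimate when $v$ is only assumed $L^\infty$: one cannot naively use $\delta^h_\alpha v$ as if it had an $L^2$ gradient, so the argument must be run at the level of the weak identity with the test function $\delta^{-h}_\alpha(\eta^2\delta^h_\alpha v)$ understood as a genuine element of the admissible test class (which requires checking it lies in $W^{2,\infty}$ or can be approximated by such — here the $C^{1,\bar 1}$ regularity of $u$ on $B\setminus Q$ and the $L^\infty$ control of $\nabla v$ obtained a posteriori help, but a priori one should regularize $v$ by mollification, establish the estimate uniformly, and pass to the limit). A second subtlety is making the ellipticity constants of $a^{i\bar j}$ and the absorbed error constants explicit as functions of $c_n$ alone, so that choosing $c_n$ small forces all absorption constants below $1$; this is the role of the quantitative hypothesis and is a routine but necessary linear-algebra computation with cofactor matrices of Hermitian perturbations of the identity. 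Once $v\in W^{1,2}_{loc}$ is in hand, everything downstream is standard: De Giorgi–Nash–Moser $\Rightarrow C^\alpha$, complex Monge–Ampère $C^{2,\alpha}$ theory $\Rightarrow u\in C^{2,\alpha}$, and Schauder bootstrapping on $R_u=f$ $\Rightarrow u\in C^\infty$. In the global setting this is exactly the mechanism behind Theorem \ref{weak} and Proposition \ref{psmooth}; here the only change is that all constants are local and come with the explicit smallness threshold $c_n$.
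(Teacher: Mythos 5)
There is a genuine gap in your proposal, and it is located at exactly the step you flag as the hard one. You propose to take difference quotients of the volume ratio $v=\log\det(u_{i\bar j})$, plugging $\psi=\delta^{-h}_\alpha(\eta^2\delta^h_\alpha v)$ into the weak identity. But the only weak identity available is \eqref{weak1}, the $\Delta$-weak (distributional) one, whose test functions must live in $W^{2,\infty}$ (or at least $W^{2,p}$ for large $p$); since a priori $v$ is merely $L^\infty$, your $\psi$ is merely $L^\infty$ and is not admissible. Converting to the divergence-form identity $\int a^{i\bar j}\partial_i v\,\partial_{\bar j}\psi = -\int \tilde f\psi$ already requires $v\in W^{1,2}$, which is what you are trying to prove, so that route is circular. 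Mollifying $v$ alone does not repair this: $v$ enters \eqref{weak1} as a specific nonlinear function of $u_{i\bar j}$, inseparably coupled to the coefficients $u^{i\bar j}\det(u_{i\bar j})$, so smoothing $v$ destroys the equation unless you also smooth $u$, which changes the problem. Moreover, even formally, your claim that ``we never need to differentiate $a^{i\bar j}$'' is not correct: in a difference-quotient estimate for a divergence-form equation the commutator term $(\delta^h_\alpha a^{i\bar j})\partial_i v$ appears unavoidably, and for merely $L^\infty$ coefficients this term is uncontrolled. This is precisely the known failure of $W^{2,2}$ (indeed $W^{2,p}$) theory for elliptic equations with only $L^\infty$ coefficients, and the smallness hypothesis cannot be spent on absorbing cutoff terms as you suggest; those absorb for any ellipticity ratio.

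The paper's proof sidesteps all of this by difference-quotienting the potential $u$ rather than the volume ratio. Since $u\in C^{1,\bar 1}$, the test function $\psi=\delta^{-h}_\alpha(\eta^4\delta^h_\alpha u)$ is in $W^{2,\infty}$ for each fixed $h$, hence genuinely admissible in \eqref{weak1}. After discrete integration by parts, the leading term is the quadratic form $\int a^{i\bar j}_{k\bar l}\,v_{k\bar l}\,v_{i\bar j}\,\eta^4$ with $v=\delta^h_\alpha u$, where $a^{i\bar j}_{k\bar l}$ is the linearization of $U_{i\bar j}=\log\det(u_{i\bar j})\det(u_{i\bar j})u^{i\bar j}$. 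This form is \emph{not} pointwise positive definite; the crucial input is the Calder\'on--Zygmund identity $\int_B(\Delta w)^2=\int_B|\nabla^2 w|^2$ for $w\in W^{2,2}_0(B)$, which combined with the smallness of $c_n$ (Proposition \ref{CZ}) makes the form coercive modulo lower-order terms. That is where the explicit $c_n$ enters. The conclusion is $u\in W^{3,2}_{loc}$ directly; only then does $v=\log\det(u_{i\bar j})\in W^{1,2}_{loc}$ follow as a by-product, and the downstream De Giorgi--Nash--Moser, Monge--Amp\`ere $C^{2,\alpha}$, and Schauder bootstrap steps that you outline are then standard, as the paper says. So your description of the endgame is right, but the engine of the proof is different from what you propose and the step you identify as ``the genuinely new work'' is not closed by your argument.
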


\begin{proof}The key is to show that $u\in W^{3, 2}_{loc}$. By an approximation, we can take testing functions in $W^{2, \infty}_0$. Let $\eta$ be a cut-off function with compact support in $B(1)$. We take, for some small $h>0$, which will be specified later (it depends on the support of $\eta$),   
\[
\psi=\delta^{-h}_{\alpha} (\eta^4\delta^h_{\alpha} u),\; v=\delta^h_{\alpha} u
\]
We compute, by 
\[
\int_B  \log (\det (u_{i\bar j})) \det(u_{i\bar j}) u^{i\bar j}\p^2_{i\bar j}(\delta^{-h}_\alpha (\eta^4\delta^h_{\alpha} u))=\int_B \delta^h_{\alpha} (U_{i\bar j}) \p^2_{i\bar j} (\eta^4 \delta^h_{\alpha} u),
\]
where $U_{i\bar j}=\log (\det (u_{i\bar j})) \det(u_{i\bar j}) u^{i\bar j}$, for a given matrix $(u_{i\bar j})$.

Now consider the matrix $\p\bar\p w=w_{i\bar j}(x, t)=t u_{i\bar j}(x+he_\alpha)+(1-t)u_{i\bar j}(x)$.
Denote \[U_{i\bar j}(t)=\log (\det (w_{i\bar j}(t))) \det(w_{i\bar j}(t)) w^{i\bar j}(t)\] 
Then we compute
\[
\begin{split}
\delta_\alpha^h U_{i\bar j}=&\frac{U_{i\bar j}(1)-U_{i\bar j}(0)}{h}\\
=&h^{-1}\int_0^1 U^{'}(t)dt\\
=&\int_0^1 \frac{\p U_{i\bar j}}{\p w_{k\bar l}} dt \frac{u_{k\bar l}(x+he_\alpha)-u_{k\bar l}(x)}{h}
\end{split}
\]

Then we compute
\[
\frac{\p U_{i\bar j}}{\p w_{k\bar l}}=w^{k\bar l} \det(w_{i\bar j}) w^{i\bar j}+\log\det(w_{i\bar j})\det(w_{i\bar j}) \left(w^{k\bar l}w^{i\bar j}- w^{i\bar l}w^{k\bar j}\right)
\]

Denote \begin{equation}
\label{A}a^{i\bar j}_{k\bar l}(\p\bar\p w)=w^{k\bar l} \det(w_{i\bar j}) w^{i\bar j}+\log\det(w_{i\bar j})\det(w_{i\bar j}) \left(w^{k\bar l}w^{i\bar j}- w^{i\bar l}w^{k\bar j}\right)\end{equation}

It follows that, by mean value theorem, for some $t_0\in [0, 1]$ 

\[
\int_0^1 \frac{\p U_{i\bar j}}{\p w_{k\bar l}} dt=\int_0^1a^{i\bar j}_{k\bar l} (\p\bar\p w(t))dt=a^{i\bar j}_{k\bar l} (\p\bar\p w(t_0)). 
\]

We compute, with $v=\delta^h_\alpha$

\[
\int_B \delta^h_{\alpha} (U_{i\bar j}) \p^2_{i\bar j} (\eta^4 \delta^h_{\alpha} u)=\int_B a^{i\bar j}_{k\bar l}v_{k\bar l}\left( v_{i\bar j}  \eta^4+v\p^2_{i\bar j}(\eta^4)+v_i\p_{\bar j}(\eta^4)+v_{\bar j}\p_i(\eta^4)\right)
\]

First we estimate the following, for any $\epsilon\in (0, 1)$, there exists a constant $C=C(n, \eta, \epsilon)$, 

\begin{equation}
\int_B a^{i\bar j}_{k\bar l}v_{k\bar l}\left( v\p^2_{i\bar j}(\eta^4)+v_i\p_{\bar j}(\eta^4)+v_{\bar j}\p_i(\eta^4)\right)\leq \epsilon \int_B |v_{k\bar l}|^2\eta^4+C\int_B \left(|\nabla v|^2+v^2\right)
\end{equation}

This is by straightforward computations. For the first term,
\[
\begin{split}
\left|\int_B a^{i\bar j}_{k\bar l}v_{k\bar l}v\p^2_{i\bar j}(\eta^4)\right|\leq &C\int_B |v_{k\bar l}| v\eta^2 (|\nabla \eta|^2+\eta|\nabla^2\eta|)\\
& \leq \epsilon \int_B |v_{k\bar l}|^2 \eta^4+C(n, \eta, \epsilon) \int_B v^2
\end{split}
\]
The other two terms can be handled similarly. 
Next we claim that there exist $c_1=c_1(n), C_1=C_1(n, \eta)$, when $c_n$ is small, then we have, 
\begin{equation}\label{mainestimate}
\int_B a^{i\bar j}_{k\bar l}v_{k\bar l} v_{i\bar j}\eta^4\geq c_1\int_B |\nabla^2 v|^2\eta^4-C\int_B \left(|\nabla v|^2+v^2\right),
\end{equation}

This is the place where we need to use very basic Caldron-Zygmund theory. Suppose we have established the above claim, then it follows that, on the set $\Omega=\{\eta=1\}$
\[
\int_\Omega |\nabla^2 v|^2\leq C.
\]
This implies $u\in W^{3, 2}_{loc}$. Once $u\in W^{3, 2}_{loc}$, it is smooth by the well-known theory in the subject. We roughly go through the argument: $u\in W^{3, 2}_{loc}$, then it implies that $V=\log\det(u_{i\bar j})\in W^{1, 2}_{loc}$ and it is a weak solution of the equation
\[
\Delta_u V=-f
\]
Since $u_{i\bar j}$ is uniformly bounded and positive, it implies $V\in C^\alpha$ by standard elliptic theory. By the complex Monge-Ampere equation $\log\det(u_{i\bar j})=V$, it then follows that $u\in C^{2, \alpha}$ \cite{wangyu}. Hence $u$ is then smooth by standard bootstrapping argument. 
 \end{proof}

We prove the following proposition.  

\begin{prop}\label{CZ}
Given $c_n$ sufficiently small, then the estimate \eqref{mainestimate} holds. 
\end{prop}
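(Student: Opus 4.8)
The plan is to interpret the quantity $a^{i\bar j}_{k\bar l}(\partial\bar\partial w)v_{k\bar l}v_{i\bar j}$ as a perturbation of a model quadratic form obtained by setting the Hessian $w_{i\bar j}$ equal to the identity (equivalently, $\delta_{i\bar j}$), and then to control the perturbation term using the smallness hypothesis $(1-c_n)\delta_{i\bar j}\leq w_{i\bar j}\leq (1+c_n)\delta_{i\bar j}$, which holds because $w_{i\bar j}(t_0)$ is a convex combination of two matrices satisfying that bound. Writing out \eqref{A}, at the model point $w_{i\bar j}=\delta_{i\bar j}$ (where $\det=1$ and $\log\det=0$) the tensor degenerates to $a^{i\bar j}_{k\bar l}=\delta^{k\bar l}\delta^{i\bar j}$, so the model quadratic form is exactly $\big|\sum_i v_{i\bar i}\big|^2 = |\Delta_0 v|^2$ (up to a constant), where $\Delta_0$ is the flat Laplacian. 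The first main step is therefore: integrate the model term $\int_B |\Delta_0 v|^2 \eta^4$ and apply the flat Bochner--Reinhardt/Calder\'on--Zygmund identity — integrating by parts twice — to get $\int_B |\Delta_0 v|^2 \eta^4 = \int_B |\nabla^2 v|^2 \eta^4 + (\text{terms with }\leq 1\text{ derivative on }v\text{ and derivatives on }\eta)$; crucially on $\mathbb{C}^n$ the full complex Hessian $|\partial\bar\partial v|^2 = \sum_{i,j}|v_{i\bar j}|^2$ is controlled by $|\Delta_0 v|^2$ plus lower order because $\partial\bar\partial$ is, after Fourier transform, a constant-coefficient operator whose symbol is nondegenerate on the relevant components — this is the "very basic Calder\'on--Zygmund" input advertised in the text. (One only gets $|v_{i\bar j}|^2$ for mixed derivatives this way, not $|v_{ij}|^2$; but $v=\delta^h_\alpha u$ with $u_{i\bar j}\in L^\infty$ already controls the mixed second derivatives, and that suffices to push the difference-quotient argument through, so I would be careful to state \eqref{mainestimate} with $|\nabla^2 v|^2$ meaning the mixed complex Hessian, matching how it is used.)

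The second main step is to bound the difference $\big(a^{i\bar j}_{k\bar l}(\partial\bar\partial w) - \delta^{k\bar l}\delta^{i\bar j}\big)v_{k\bar l}v_{i\bar j}$ pointwise. Since $w_{i\bar j}$ is within $c_n$ of the identity in operator norm, all of $w^{i\bar j}$, $\det(w_{i\bar j})$, and $\log\det(w_{i\bar j})$ are within $C(n)c_n$ of their model values $\delta^{i\bar j}$, $1$, $0$ respectively; expanding \eqref{A} and collecting, $\|a^{i\bar j}_{k\bar l}(\partial\bar\partial w) - \delta^{k\bar l}\delta^{i\bar j}\| \leq C(n) c_n$. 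Hence
\[
\Big|\int_B \big(a^{i\bar j}_{k\bar l}(\partial\bar\partial w) - \delta^{k\bar l}\delta^{i\bar j}\big)v_{k\bar l}v_{i\bar j}\,\eta^4\Big| \leq C(n)\,c_n \int_B |\nabla^2 v|^2 \eta^4.
\]
Combining with step one, $\int_B a^{i\bar j}_{k\bar l}v_{k\bar l}v_{i\bar j}\eta^4 \geq (1-C(n)c_n)\int_B |\nabla^2 v|^2\eta^4 - C(n,\eta)\int_B(|\nabla v|^2+v^2)$, and choosing $c_n$ so small that $C(n)c_n \leq 1/2$ gives \eqref{mainestimate} with $c_1 = 1/2$ (or any explicit fraction), as required; $c_n$ is explicit because every constant $C(n)$ above comes from elementary matrix estimates and the fixed flat Calder\'on--Zygmund constant.

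I expect the main obstacle to be the bookkeeping in step one: making the "integrate by parts twice" identity for $\int |\Delta_0 v|^2 \eta^4$ fully rigorous when $v=\delta^h_\alpha u$ is only known a priori to be in $W^{1,2}_{loc}$ with $v_{i\bar j}\in L^\infty$, so that $|\Delta_0 v|^2$ and $|\nabla^2 v|^2$ may only make sense as distributions to begin with. The clean way around this is to run the whole argument with $u$ replaced by a smooth mollification $u_\epsilon$ (or to work with second difference quotients of $u$ in place of $\partial\bar\partial v$), establish \eqref{mainestimate} with constants independent of $\epsilon$ and independent of $h$, and pass to the limit; the smallness hypothesis is stable under mollification since it is a convexity condition on the Hessian. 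A secondary subtlety is that the flat identity produces boundary-type terms where derivatives land on $\eta^4$ — these are handled exactly as the lower-order terms already estimated in the proof of Theorem~\ref{regularity1} (Cauchy--Schwarz, absorbing an $\epsilon\int |\nabla^2 v|^2\eta^4$ and paying $C(n,\eta)\int(|\nabla v|^2 + v^2)$), so they only shift $c_1$ by another harmless fraction. I would organize the write-up so that the flat Calder\'on--Zygmund identity is isolated as a one-line lemma on $\mathbb{C}^n$, then the matrix perturbation estimate, then the combination.
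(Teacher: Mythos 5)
Your proposal is correct and follows essentially the same route as the paper: isolate the flat model term $(\Delta v)^2$ (where $a^{i\bar j}_{k\bar l}=\delta^{k\bar l}\delta^{i\bar j}$), bound the deviation by $O(c_n)$ times the complex Hessian squared using that $w^{i\bar j}$, $\det w$, $\log\det w$ are each within $O(c_n)$ of their flat values, and apply the cutoff Calder\'on--Zygmund identity \eqref{CZ2} to convert $\int(\Delta v)^2\eta^4$ into $\int|\nabla^2 v|^2\eta^4$ plus lower-order error. The only difference is packaging: you linearize the whole tensor $a^{i\bar j}_{k\bar l}$ about the identity in one stroke, whereas the paper keeps $\det(w)\,|w^{k\bar l}v_{k\bar l}|^2$ intact and expands it via $(a+b)^2\geq\tfrac12 a^2-3b^2$ to produce the explicit constants $a_n,b_n$; your version gives a marginally better $c_1$ but the mechanism is identical.
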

\begin{proof}
By assumption, we know that $(1-c_n)\delta_{i \bar j}\leq w_{i\bar j}(t)\leq (1+c_n)\delta_{i \bar j}$, for any $t\in [0, 1]$. Hence we compute
\[
\begin{split}
&(1-c_n)^n\leq \det(w_{k\bar l})\leq (1+c_n)^n,\\
&|\log(\det(w_{k\bar l}))|\leq nc_n,\\
&w^{k\bar l} v_{k\bar l}=\Delta v+\left(w^{k\bar l}-\delta^{k\bar l}\right) v_{k\bar l}
\end{split}
\]
We can estimate, using $(a+b)^2\geq \frac{1}{2}a^2-3b^2$,
\[
\begin{split}
\left|w^{k\bar l} v_{k\bar l}\right|^2\geq \frac{1}{2}(\Delta v)^2-3\left|w^{k\bar l}-\delta^{k\bar l}\right|^2 |v_{k\bar l}|^2\geq \frac{1}{2}(\Delta v)^2-\frac{3c_n^2}{(1-c_n)^4}|\nabla^2 v|^2
\end{split}
\]
Then we get a pointwise estimate, using \eqref{A}, 
\[
a^{i\bar j}_{k\bar l}v_{k\bar l}v_{i\bar j}\geq a_n (\Delta v)^2-b_n|\nabla^2 v|^2,
\]
where we compute,
\[
\begin{split}
&a_n=\frac{1}{2}(1-nc_n)(1-c_n)^n\\
&b_n=3c_n^2(1-nc_n)(1-c_n)^{n-4}+nc_n(1+c_n)^n
\end{split}
\]
Hence we get 
\begin{equation}\label{mainestimate2}
\int_B a^{i\bar j}_{k\bar l}v_{k\bar l} v_{i\bar j}\eta^4\geq \int_B \left(a_n (\Delta v)^2-b_n|\nabla^2 v|^2\right)\eta^4
\end{equation}
We need a special case of an elementary Caldron-Zygmund equality (inequality). We have for $w\in W^{2, 2}_0$, 
\begin{equation}\label{CZ1}
\int_B (\Delta w)^2=\int_B |\nabla^2 w|^2.
\end{equation}
We need a straightforward extension,  for any $\epsilon\in (0, 1)$, there exists $C=C(n, \eta, \epsilon)$, such that
\begin{equation}\label{CZ2}
\int_B (\Delta v)^2\eta^4\geq (1-\epsilon)\int_B |\nabla^2 v|^2\eta^4-C(n, \eta, \epsilon)\int_B \left(|\nabla v|^2+\int_B v^2\right). 
\end{equation}
We compute,
\[
\begin{split}
\int_B (\Delta v)^2\eta^4=&\int_B |\Delta (v\eta^2)|^2-\int_B \left(\Delta(\eta^2) v+\nabla(\eta^2)\nabla v \right)^2\\
&-2\int_B \Delta v \eta^2 \left(\Delta(\eta^2) v+\nabla(\eta^2)\nabla v \right)\\
\geq & (1-\epsilon_1)\int_B |\Delta (v\eta^2)|^2-C \int_B \left(\Delta(\eta^2) v+\nabla(\eta^2)\nabla v \right)^2\\
=&(1-\epsilon_1)\int_B|\nabla^2 (v\eta^2)|^2-C \int_B \left(\Delta(\eta^2) v+\nabla(\eta^2)\nabla v \right)^2\\
\geq&(1-2\epsilon_1)\int_B |\nabla^2 v|\eta^4-C(n, \eta, \epsilon_1)\int_B \left(|\nabla v|^2+\int_B v^2\right). 
\end{split}
\]
It then follows, from \eqref{mainestimate2} and \eqref{CZ2}, that
\begin{equation}
\int_B a^{i\bar j}_{k\bar l}v_{k\bar l} v_{i\bar j}\eta^4\geq 
c_1\int_B |\nabla^2 v|^2 \eta^4-C\int_B (|\nabla v|^2+v^2),\end{equation}
with $c_1=a_n(1-\epsilon)-b_n$. Take $\epsilon=1/2$ for example,  then $c_1>0$ when $c_n$ is small. This completes the proof.  
\end{proof}

The following then follows directly from our local regularity result. 
\begin{cor}\label{regularity}Given $(M, g)$ and suppose $\psi\in \cH^{1, 1}_{w+}$ is a weak solution . Then there exists a positive dimensional constant $c_0=c_0(n)$ such that if  $\|\p\bar \p \psi\|_g\leq c_0$, then $\psi$ is smooth. 
\end{cor}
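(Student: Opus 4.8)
The plan is to deduce Corollary~\ref{regularity} directly from the local result Theorem~\ref{regularity1} (equivalently Theorem~\ref{regularity} / Proposition~\ref{CZ}), by a covering argument together with a choice of good local holomorphic coordinates in which the hypothesis $\|\p\bar\p\psi\|_g\leq c_0$ forces the complex Hessian of $\psi$ to be pinched near a constant multiple of the identity. The point is that ``$\psi$ is a weak solution of $R_\psi=\underline R$'' is, by definition, a statement about the $\Delta$-weak equation $\Delta_\psi V_\psi=g^{i\bar j}_\psi R_{i\bar j}-\underline R$ against $C^{1,1}$ test functions with compact support; localized in a coordinate chart this is exactly the local weak equation \eqref{weak1} with $f$ equal to a (smooth, because $g$ is smooth) local expression, once we absorb the background metric into the coordinates.

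First I would fix a point $p\in M$ and choose holomorphic normal coordinates $z$ centered at $p$ in which $g_{i\bar j}(p)=\delta_{i\bar j}$ and $\p g_{i\bar j}(p)=0$; on a small enough ball $B_r(p)$ we then have $(1-\tfrac{c_n}{2})\delta_{i\bar j}\le g_{i\bar j}\le (1+\tfrac{c_n}{2})\delta_{i\bar j}$ by continuity. The metric $\omega_\psi$ in these coordinates has matrix $u_{i\bar j}=g_{i\bar j}+\psi_{i\bar j}$; the hypothesis $\|\p\bar\p\psi\|_g\le c_0$ means precisely that the eigenvalues of $(\psi_{i\bar j})$ relative to $(g_{i\bar j})$ are bounded by $c_0$ in absolute value, so in the chosen coordinates $|\psi_{i\bar j}|\lesssim c_0$ pointwise. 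Choosing $c_0=c_0(n)$ small enough (depending only on the dimensional constant $c_n$ of Theorem~\ref{regularity1} and the elementary comparison between the ambient metric and the flat one on $B_r$), we get $(1-c_n)\delta_{i\bar j}\le u_{i\bar j}\le (1+c_n)\delta_{i\bar j}$ on $B_r(p)$. One also has to check that $\psi$ restricted to $B_r(p)$ is a weak solution of $R_u=f$ in the sense of \eqref{weak1} for a smooth $f$: this is a bookkeeping step rewriting the intrinsic $\Delta$-weak equation in the chart, using that $\det(g_{i\bar j})$ and $R_{i\bar j}$ are smooth and bounded away from degeneracy, and that $C^{1,1}_0$ test functions on $B_r(p)$ are admissible test functions for the global weak formulation.

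Having verified the hypotheses of Theorem~\ref{regularity1} on $B_r(p)$, we conclude $\psi\in W^{3,2}_{loc}$ near $p$, hence (as spelled out at the end of the proof of Theorem~\ref{regularity1}: $V_\psi=\log\det(u_{i\bar j})\in W^{1,2}_{loc}$ is a weak solution of $\Delta_u V_\psi=-f$, so $V_\psi\in C^\alpha$ by De~Giorgi--Nash--Moser, whence $u\in C^{2,\alpha}$ by the complex Monge--Amp\`ere regularity of \cite{wangyu}, and then bootstrapping gives $C^\infty$) $\psi$ is smooth in a neighborhood of $p$. Since $p\in M$ was arbitrary and smoothness is local, $\psi$ is smooth on all of $M$. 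Note the constant $c_0$ depends only on $n$ and on the fixed background metric $g$ through the modulus of continuity of its coefficients — but since $M$ is compact we may take a uniform $r$ and uniform comparison constants, so $c_0=c_0(n)$ can be taken to depend only on the dimension after absorbing $g$-dependent factors (as the statement claims; if one is careful, the dependence on $g$ is only through an upper bound for $\|g\|_{C^2}$ and a lower bound on its ellipticity, which on a fixed compact manifold are just numbers).

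The main obstacle is the coordinate-change step: one must make sure that pinching $\p\bar\p\psi$ in the \emph{intrinsic} norm $\|\cdot\|_g$ really does pinch the \emph{coordinate} Hessian $(u_{i\bar j})$ between $(1-c_n)\delta_{i\bar j}$ and $(1+c_n)\delta_{i\bar j}$ — this works only because we are free to pick normal coordinates at each point and because $M$ is compact, so there is a uniform radius on which $g$ is close to flat; without compactness or some uniform control on $g$ the constant would degenerate. A secondary point requiring care is that \eqref{weak1} is posed for the flat background, so one should either absorb $\det(g_{i\bar j})$ and the Ricci term into the ``$f$'' and the ``$u$'' consistently, or equivalently note that Theorem~\ref{regularity1}'s proof only ever used the flat Calder\'on--Zygmund identity \eqref{CZ1} and the pinching of $(u_{i\bar j})$, both of which survive the localization; either way this is routine once the pinching is in hand.
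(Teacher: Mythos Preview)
Your proposal is correct and follows essentially the same approach as the paper: localize via coordinates in which the background metric $g$ is uniformly close to the flat metric, observe that the pinching $\|\p\bar\p\psi\|_g\le c_0$ then forces $(1-c_n)\delta_{i\bar j}\le u_{i\bar j}\le(1+c_n)\delta_{i\bar j}$ for the local potential $u=h+\psi$, and apply Theorem~\ref{regularity1}. The paper phrases the localization via a finite cover by charts with local K\"ahler potentials $h$ (rather than pointwise normal coordinates), but the content is identical; in particular, the paper makes the same observation you do, that the pinching constant $\epsilon$ for $h_{i\bar j}$ against $\delta_{i\bar j}$ can be chosen universally (only the cover depends on $g$), which is why $c_0$ ends up purely dimensional.
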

\begin{proof}It is standard to localize the problem, by using the K\"ahler condition. We roughly sketch a proof. Locally we can have a coordinate patch, that the metric $g$ is given by a local potential, $g_{i\bar j}=\p_i\p_{\bar j} h$, for some $h$ smooth and $\sqrt{-1}\p\bar\p h>0$. Then locally the metric $g_\psi$ is given by $\sqrt{-1}\p\bar \p (h+\psi)$.We can pick up finitely many open sets $U_i$ to cover the manifold. By a scaling argument, we can assume that $U_i$ are holomorphic to Euclidean balls $B(1)$.   We can also assume that $h_{i\bar j}$ is sufficiently closed to $\delta_{i\bar j}$, for some $\epsilon$ sufficiently small,  $(1-\epsilon)\delta_{i\bar j}<h_{i\bar j}<(1+\epsilon)\delta_{i\bar j}$. Note that we can make $\epsilon$ a universal constant, even though the open covering $U_i$ clearly depends on the metric $g$. 
Then we apply our argument locally to $u=h+\psi$, then it follows that there exists a constant $c_1$, such that is $-c_1\delta_{i\bar j}\leq \psi_{i\bar j}<c_1\delta_{i\bar j}$, then $(1-c_n)\delta_{i\bar j}\leq h_{i\bar j}+\psi_{i\bar j}\leq (1+c_n)\delta_{i\bar j}$. Then it follows that $\psi$ is smooth on each local patch, hence smooth on the manifold. Note that $c_1$ depends only on $c_n$ and $\epsilon$, hence it is a dimensional constant. Then we need to note that $\|\p\bar\p \psi \|_g$ does not depend on the choice of local coordinates, and hence we can choose $c_0=c_1$. \end{proof}

\begin{rmk}
Recently Chen-Warren \cite{CW} consider a fourth order equation: hamiltonian stationary equation and proved a regularity result with small $L^\infty$ on Hessian. Our regularity result mimics theirs, even though there is technical difference since the nonlinearity is quite different. For example, we need a very basic Caldron-Zygmund inequality to prove Proposition \ref{CZ}. \end{rmk}

In \cite{CW} Chen-Warren proved a version of  removable singularity for the equation by allowing the equation is defined away a compact capacity zero set, using a result of Serrin \cite{Serrin}. We refer to \cite{CW, Serrin} for the notion of capacity and their removable singularity results. Similar results also hold in our case. In particular we consider only the equation of the form, in weak sense of \eqref{weak1},
\[
R_u=\underline{R}
\]
We can rewrite the equation as, in the weak sense of \eqref{weak1}, 
\[
-\Delta_u (\log(\det(u_{i\bar j}))+\underline{R} u)=0
\]
\begin{thm}Let $Q\subset B=B(1)$ be a compact set with capacity zero. Then there exists $c_n$ such that $u\in C^{1, 1}(B\backslash Q)$ is a weak solution of \eqref{weak1} with the following holds (in the sense of $L^\infty$), 
\[
(1-c_n)\delta_{i \bar j}\leq u_{i\bar j}\leq (1+c_n)\delta_{i \bar j},
\]
then $u$ is smooth solution in $B$.
\end{thm}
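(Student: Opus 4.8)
The plan is to reduce the removable singularity statement to the interior regularity result of Theorem \ref{regularity1} by showing that the weak equation actually holds across $Q$, i.e. that $u$ is a weak solution of \eqref{weak1} on all of $B$ (not just $B\backslash Q$) and then invoke Theorem \ref{regularity1} directly. First I would record that since $Q$ is compact of capacity zero, there is a standard sequence of cut-off functions $\eta_k\in C^\infty_0(B)$ with $\eta_k\equiv 1$ near $Q$, $0\le \eta_k\le 1$, $\eta_k\to 0$ a.e., and with the capacity-zero condition giving $\int_B |\nabla \eta_k|^2\to 0$; following Serrin \cite{Serrin} and the argument in Chen-Warren \cite{CW}, one also arranges the needed control on $\eta_k$ (e.g. $\int_B|\Delta\eta_k|\,|x|^{\alpha}\to 0$ type bounds, or simply that $\eta_k$ can be taken to be capacitary potentials of shrinking neighborhoods of $Q$). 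The point of the $C^{1,\bar 1}(B\backslash Q)$ assumption, together with $u$ being bounded (which follows since $u_{i\bar j}$ is pinched near $\delta_{i\bar j}$, so $u$ differs from $|z|^2$ by a bounded pluriharmonic-type correction — more precisely $u - |z|^2$ is $c_n$-Lipschitz-Hessian, hence $u\in C^{1,1}(B\backslash Q)$ extends continuously, and its first derivatives and $\log\det(u_{i\bar j})$ are all bounded on $B\backslash Q$), is that all the quantities appearing in \eqref{weak1} — namely $\log\det(u_{i\bar j})$, $\det(u_{i\bar j})$, $u^{i\bar j}$, and the test-function Hessian — are in $L^\infty(B)$ once we know $Q$ has measure zero.

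Concretely, given any $\psi\in C^\infty_0(B)$, I would test the equation on $B\backslash Q$ with $(1-\eta_k)\psi$, which has compact support in $B\backslash Q$ (it vanishes near $Q$), and obtain
\[
-\int_B \log(\det(u_{i\bar j}))\det(u_{i\bar j})u^{i\bar j}\,\p^2_{i\bar j}\big((1-\eta_k)\psi\big) = \int_B (1-\eta_k)\psi\, f\,\det(u_{i\bar j}).
\]
The right-hand side converges to $\int_B \psi f\det(u_{i\bar j})$ by dominated convergence. On the left, expanding $\p^2_{i\bar j}((1-\eta_k)\psi) = (1-\eta_k)\psi_{i\bar j} - \psi\,(\eta_k)_{i\bar j} - \psi_i(\eta_k)_{\bar j} - \psi_{\bar j}(\eta_k)_i$, the first term converges to $-\int_B \log\det(u_{i\bar j})\det(u_{i\bar j})u^{i\bar j}\psi_{i\bar j}$ by dominated convergence, so it remains to show the three error terms involving derivatives of $\eta_k$ tend to zero. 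The terms with $\psi_i(\eta_k)_{\bar j}$ and $\psi_{\bar j}(\eta_k)_i$ are bounded by $C\int_B|\nabla\eta_k|\to 0$ (Cauchy–Schwarz against $\int_B|\nabla\eta_k|^2\to 0$ and finite measure, or directly if the $\eta_k$ are chosen with $\int|\nabla\eta_k|\to 0$). The genuinely delicate term is $\int_B U^{i\bar j}\psi\,(\eta_k)_{i\bar j}$ with $U^{i\bar j}:=\log(\det u)\det(u)u^{i\bar j}\in L^\infty$: since the second derivatives of $\eta_k$ need not have small $L^1$ norm, one must not leave the Laplacian on $\eta_k$. Instead I would integrate by parts once more, moving one derivative off $\eta_k$ onto $U^{i\bar j}\psi$; this is exactly where the Kähler structure enters, via \eqref{kcondition}: $\p_i(\det(u_{k\bar l})u^{i\bar j}) = 0$, so $U^{i\bar j}$ has the divergence-free property $\p_i U^{i\bar j} = (\det u\, u^{i\bar j})\,\p_i\log\det(u_{i\bar j})$, which by the $C^{1,\bar 1}(B\backslash Q)$ hypothesis lies in $L^\infty(B\backslash Q)$ (since $\log\det(u_{i\bar j})\in C^{0,1}$ away from $Q$). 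Hence
\[
\int_B U^{i\bar j}\psi\,(\eta_k)_{i\bar j} = -\int_B \p_i\big(U^{i\bar j}\psi\big)\,(\eta_k)_{\bar j},
\]
and the integrand is now $L^\infty$ times $(\eta_k)_{\bar j}$, so the whole thing is bounded by $C\int_B|\nabla\eta_k|\to 0$. (If one prefers to avoid differentiating $U^{i\bar j}$, the alternative is to use that $u$ being $C^{1,\bar 1}$ away from $Q$ makes $\log\det(u_{i\bar j})$ itself a $W^{1,2}_{loc}(B\backslash Q)$ function with an $L^\infty$ representative, and to run the capacity argument of Serrin directly on the divergence-form operator $\Delta_u$ acting on $V=\log\det(u_{i\bar j})+\underline R\,u$.)

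Having shown the limit identity holds for every $\psi\in C^\infty_0(B)$, $u$ is a weak solution of \eqref{weak1} on the full ball $B$ with the pinching $(1-c_n)\delta_{i\bar j}\le u_{i\bar j}\le (1+c_n)\delta_{i\bar j}$ valid a.e. on $B$. Theorem \ref{regularity1} (with the same $c_n$) then gives $u\in W^{3,2}_{loc}(B)$, and the bootstrapping at the end of that theorem's proof — $V=\log\det(u_{i\bar j})\in W^{1,2}_{loc}$ solves $\Delta_u V = -\underline R$ in the divergence sense, hence $V\in C^\alpha$, hence $u\in C^{2,\alpha}$ by \cite{wangyu}, hence $u$ smooth by Schauder — yields smoothness of $u$ on $B$. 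The main obstacle, as indicated, is the error term with second derivatives of the capacitary cut-offs; the resolution is that the Kähler identity \eqref{kcondition} lets one integrate by parts to keep only one derivative on $\eta_k$, after which the capacity-zero condition ($\int|\nabla\eta_k|^2\to 0$) closes the estimate, exactly as in Serrin's and Chen-Warren's removable-singularity arguments.
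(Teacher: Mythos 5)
Your main line of argument has a genuine gap at the one term you yourself flag as "delicate." You try to control
\[
\int_B U^{i\bar j}\psi\,(\eta_k)_{i\bar j}
\]
by integrating by parts once and using the K\"ahler identity to write $\p_i U^{i\bar j}=\det(u_{k\bar l})u^{i\bar j}\,\p_i\log\det(u_{k\bar l})$, then claiming this lies in $L^\infty$ because "$\log\det(u_{i\bar j})\in C^{0,1}$ away from $Q$" follows from $u\in C^{1,\bar 1}$. That inference is false: $C^{1,\bar 1}$ only gives $\p\bar\p u\in L^\infty$, so $\log\det(u_{i\bar j})$ is bounded but not Lipschitz; $\p_i\log\det(u_{i\bar j})$ is a \emph{third} derivative of $u$ and is not controlled by the hypothesis. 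Even after invoking Theorem \ref{regularity1} to see that $u$ is smooth on $B\setminus Q$, the quantity $\p_i U^{i\bar j}$ can blow up as you approach $\partial Q$, and since $(\eta_k)_{\bar j}$ is concentrated exactly near $Q$ (and is large there), the product $\p_i(U^{i\bar j}\psi)\,(\eta_k)_{\bar j}$ need not tend to zero. So the passage from a weak solution on $B\setminus Q$ to a weak solution of \eqref{weak1} on all of $B$ is not established by your argument.

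The parenthetical alternative you offer is essentially the route the paper takes, but as stated it also has a misattribution: you claim $\log\det(u_{i\bar j})\in W^{1,2}_{loc}(B\setminus Q)$ directly "by $C^{1,\bar 1}$," which again would need a third derivative of $u$. The correct order of operations is the paper's: first apply the local regularity theorem (Theorem \ref{regularity1}, after rescaling) at every interior point of $B\setminus Q$ to conclude $u$ is \emph{smooth} there; only then does $\theta=\log\det(u_{i\bar j})+\underline{R}\,u$ make sense as a $W^{1,2}_{loc}$ (indeed smooth) function on $B\setminus Q$. Using the K\"ahler identity \eqref{kcondition} the original fourth-order weak equation, restricted to test functions supported away from $Q$, becomes the \emph{second-order divergence-form} identity
\[
\int_B \eta_i\,u^{i\bar j}\det(u_{k\bar l})\,\theta_{\bar j}\,dx=0,
\]
whose coefficients $u^{i\bar j}\det(u_{k\bar l})$ are merely $L^\infty$ and are never differentiated. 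Serrin's removable singularity theorem for such equations, together with the global $L^\infty$ bound on $\theta$ coming from $C^{1,\bar 1}$, then extends $\theta$ as a weak solution across $Q$; De Giorgi--Nash--Moser gives $\theta\in C^\alpha(B)$, hence $\log\det(u_{i\bar j})\in C^\alpha(B)$, hence $u\in C^{2,\alpha}(B)$ by \cite{wangyu}, and bootstrapping finishes. The crucial point your proposal misses is that the removability argument must be run on the \emph{second-order divergence-form} problem for $\theta$ (where no derivatives of the coefficients appear), not on the fourth-order formulation \eqref{weak1}, precisely because the latter would require third-derivative control on $u$ near $Q$ that is unavailable.
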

\begin{proof}We can apply Theorem \ref{regularity1} locally for any point $p\in B\backslash Q$ locally on a ball $B_p(r)$ with radius $r$,   $B_p(r)\subset B\backslash Q$, since $Q$ is compact. 
Indeed in the ball $B_p(r)$, we can consider $u_r(x)=r^{-2} u(p+rx)$, for $x\in B(1)$. An easy computation shows that $\p\bar\p u_r (x)=\p\bar\p u (p+r x)$. Hence $u_r(x)$ is a weak solution of \eqref{weak1} in $B(1)$ and satisfies the assumption in Theorem \ref{regularity1}. It follows that $u_r$ is smooth in $B(1)$, hence $u$ is smooth in $B\backslash Q$. To extend $u$ across $Q$, we need to use a theorem of Serrin \cite[Theorem 1.2]{Serrin}. If $u$ satisfies \eqref{weak1}, then for any $\eta$ supported in $B$ away from $Q$, we have
\[
\int_B u^{i\bar j} \p_i\p_{\bar j} (\eta)  \theta \det(u_{i\bar j}) dx=0,
\]
where $\theta=\log(\det(u_{i\bar j}))+\underline{R} u$. Since $u$ is smooth in $B\backslash Q$ and $\eta$ is supported in $B$ away from $Q$, then we compute
\[
\int_B \eta_{i} u^{i\bar j}\det(u_{i\bar j}) \theta_{\bar j} dx=0.
\]
In other words, $\theta$ is a weak solution of $\p_i (u^{i\bar j}\det(u_{i\bar j}) \p_{\bar j}\theta)=0$.  Serrin's result then implies that $\theta$ can be extended across $Q$, as a weak solution $\p_i (u^{i\bar j}\det(u_{i\bar j}) \p_{\bar j}\theta)=0$ on $B$. A standard elliptic regularity then implies that $\theta\in C^{\alpha}(B)$, by de Georgi-Nash-Moser type of estimates. Then it follows that $u$ is smooth on $B$, arguing as in Theorem \ref{regularity1}. 
\end{proof}

\section{$W^{2, 2}$ estimate and CSCK on compact K\"ahler manifolds}

In this section we derive $W^{2, 2}$ estimate of the linear equation of the form $\Delta_\omega u=f$ with $L^\infty$ coefficients, and give a proof of Theorem \ref{main0}. Theorem \ref{main} is a direct consequence of Theorem \ref{main0} 

By the discussion in Section 2, the key for the smooth regularity  is to show the volume ratio is in $W^{1, 2}$; in other words, whether a $\Delta$-weak solution is also $W^{1, 2}$ weak solution (for  linear equation). By a simple observation, we see that a $W^{1, 2}$ weak solution is always a $\Delta$-weak solution.

Suppose $\phi\in \cH_{\omega_+}$. Then $\omega_\phi$ defines an $L^\infty$ K\"ahler metric and we also assume the following bound through this section, for two positive constants $\epsilon<\Lambda$
\[
\epsilon \omega\leq \omega_\phi\leq \Lambda \omega. 
\]

\begin{prop}\label{prop2.3}
Suppose $\phi \in \cH_{\omega+}^{1,1}$. For any $u \in W^{1,2}(M)$ and any $\psi \in C^\infty(M)$, we have
\begin{equation}
\int_M d u \wedge d^c \psi \wedge \omega_\phi^{n-1} =  - \int_M u dd^c \psi \wedge \omega_\phi^{n-1}, 
\end{equation}
where $d = \partial + \bar \partial $ and $d^c = \sqrt{-1}(\bar \partial - \partial )$. Consequently, if $u$ is a $W^{1,2}$ weak solution of equation $\Delta_{\phi} u = f$, then $u$ is also a $\Delta$-weak solution.
\end{prop}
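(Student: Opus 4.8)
The statement is essentially an integration-by-parts identity valid for an $L^\infty$ K\"ahler form $\omega_\phi$, together with the observation that it immediately converts the $W^{1,2}$ (divergence-form) weak formulation into the $\Delta$-weak (distributional) one. The plan is to prove the identity
\[
\int_M du \wedge d^c\psi \wedge \omega_\phi^{n-1} = -\int_M u\, dd^c\psi \wedge \omega_\phi^{n-1}
\]
first for $u \in C^\infty(M)$, where it is just Stokes' theorem applied to $d\big(u\, d^c\psi \wedge \omega_\phi^{n-1}\big)$, and then to pass to $u \in W^{1,2}(M)$ by density. The subtle point—and the reason the K\"ahler hypothesis on $\phi$ matters—is that $\omega_\phi^{n-1}$ has only $L^\infty$ coefficients, so one cannot naively differentiate it; but $\omega_\phi$ is a \emph{closed} positive $(1,1)$-current, hence $d(\omega_\phi^{n-1}) = (n-1)\, d\omega_\phi \wedge \omega_\phi^{n-2} = 0$ in the appropriate sense, and $\omega_\phi^{n-1}$ pairs continuously against smooth test forms. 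Concretely, on $C^\infty(M)$ one has $d\big(u\, d^c\psi \wedge \omega_\phi^{n-1}\big) = du \wedge d^c\psi \wedge \omega_\phi^{n-1} + u\, dd^c\psi \wedge \omega_\phi^{n-1}$ because the $\omega_\phi^{n-1}$-factor is annihilated by $d$; integrating over the closed manifold $M$ kills the left-hand side.

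The remaining step is the density argument. First I would establish the identity for $u \in C^\infty(M)$ as above. Then, given $u \in W^{1,2}(M)$, choose $u_k \in C^\infty(M)$ with $u_k \to u$ in $W^{1,2}$. On the right-hand side, $\int_M u_k\, dd^c\psi \wedge \omega_\phi^{n-1} \to \int_M u\, dd^c\psi \wedge \omega_\phi^{n-1}$ because $dd^c\psi \wedge \omega_\phi^{n-1}$ is an $L^\infty$ top-form (its coefficients are bounded by $C(n)\|\psi\|_{C^2}\Lambda^{n-1}$ using $\omega_\phi \le \Lambda\omega$) and $u_k \to u$ in $L^2 \subset L^1$. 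On the left-hand side, $du_k \to du$ in $L^2$, and $d^c\psi \wedge \omega_\phi^{n-1}$ has $L^\infty$ coefficients (again bounded using the two-sided bound on $\omega_\phi$), so $\int_M du_k \wedge d^c\psi \wedge \omega_\phi^{n-1} \to \int_M du \wedge d^c\psi \wedge \omega_\phi^{n-1}$. Hence the identity holds for all $u \in W^{1,2}(M)$.

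For the consequence: rewrite the two weak formulations in the language of forms. The $W^{1,2}$ weak equation $\Delta_\phi u = f$ reads, up to the universal constant relating $g^{i\bar j}_\phi \partial_i u\, \partial_{\bar j}v\, \omega_\phi^n$ to $du \wedge d^c v \wedge \omega_\phi^{n-1}$ and $f v\, \omega_\phi^n$ to (a multiple of) $f v\, \omega_\phi^n$, as $\int_M du \wedge d^c v \wedge \omega_\phi^{n-1} = -c_n\int_M f v\, \omega_\phi^n$ for all $v \in C^\infty(M)$ (and by density all $v \in W^{1,2}$); likewise $\Delta_\phi u = f$ in the distributional sense means $\int_M u\, dd^c v \wedge \omega_\phi^{n-1} = c_n \int_M f v\, \omega_\phi^n$ for all $v \in C^\infty$. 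Applying the identity with $\psi = v$ turns the first into the second. I should be slightly careful to record the precise normalization constant $c_n$ and the pointwise identity $g^{i\bar j}_\phi \partial_i u\, \partial_{\bar j} v\; \omega_\phi^n = n\, i\partial u \wedge \bar\partial v \wedge \omega_\phi^{n-1}$ (and its real-form repackaging via $d, d^c$), but this is routine linear algebra on $(1,1)$-forms and not where any difficulty lies.

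\textbf{Main obstacle.} The only genuine subtlety is justifying $d\big(u\, d^c\psi \wedge \omega_\phi^{n-1}\big) = du \wedge d^c\psi \wedge \omega_\phi^{n-1} + u\, dd^c\psi \wedge \omega_\phi^{n-1}$ when $\omega_\phi^{n-1}$ is merely $L^\infty$: one must either work on the level of currents (using that $\omega_\phi$ is closed and positive, so $\omega_\phi^{n-1}$ is a well-defined closed positive current and the Leibniz rule with smooth factors is valid) or, more elementarily, smooth $\omega_\phi$ itself—regularize $\phi$ to $\phi_\delta \in C^\infty$ with $\omega_{\phi_\delta} \to \omega_\phi$ in, say, $L^1_{loc}$ while keeping the uniform bound $\epsilon\omega \le \omega_{\phi_\delta} \le \Lambda\omega$ (e.g. by local convolution plus a partition of unity, or working chart-by-chart as in the proof of Corollary \ref{regularity})—prove the identity for the smooth metrics $\omega_{\phi_\delta}$ by plain Stokes, and pass to the limit using that $\omega_{\phi_\delta}^{n-1} \to \omega_\phi^{n-1}$ weakly-$*$ against the $L^\infty$ coefficients. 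Everything else is a standard density-and-continuity argument.
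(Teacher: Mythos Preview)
Your proposal is correct and matches the paper's approach. The paper does not spell out a separate proof of this proposition, but in the proof of Theorem~\ref{prop2.6} it explicitly describes the argument ``similar to the argument in Proposition~\ref{prop2.3}'': approximate $u$ by smooth $u_\epsilon$, set $\alpha = u_\epsilon\, d^c\psi$ (a smooth $1$-form), note $d\alpha = du_\epsilon \wedge d^c\psi + u_\epsilon\, dd^c\psi$, and use that $\omega_\phi^{n-1}$ is a closed positive $(n-1,n-1)$-current so $\int_M d\alpha \wedge \omega_\phi^{n-1} = 0$; then let $\epsilon \to 0$. This is exactly your route (a), and your density/continuity justification for the limit is the same as the paper's.
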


An interesting question is the following,
\begin{prob}\label{weakharmonic}Let $u\in L^1$ be a $\Delta$-weak solution of $\Delta_\omega u=0$. Is $u$ a constant?
\end{prob}
If this uniqueness statement were true, then we can solve a $W^{1, 2}$ weak solution of the form \eqref{w2}, and the uniqueness above guarantees that a $\Delta$-weak solution coincide with a $W^{1, 2}$ solution. This would be sufficient for the proof. 
In Euclidean space, Weyl's lemma asserts that a $L^1$ $\Delta$-weak harmonic function is smooth (one requires testing functions have compact support). 
Similarly Problem \ref{weakharmonic} is true when $\omega$ is smooth since the image of $\Delta_\omega$ contains all smooth functions (continuous functions) with average zero. One can  view Problem \ref{weakharmonic} as a global (and easier) version of Weyl's lemma if $\omega$ is smooth. 
One can also answer Problem \ref{weakharmonic} affirmatively for $\omega$ continuous, or even when $\omega$ is an $L^\infty$ metric with various smallness assumptions, such as $\omega$ is in VMO. The essential role is the notion of strong solution in non-divergence form with non-continuous coefficients ($L^\infty$ coefficients). Note that $W^{2, p}$ estimates 
holds for continuous elliptic coefficients, or more generally for $L^\infty$ elliptic coefficients with certain smallness assumption, such as $VMO$ \cite{CFL}. But it
generally fails with only elliptic and $L^\infty$ coefficients(\cite{PT}). Hence Problem \ref{weakharmonic} is  subtle  if we require only $\omega$ to be an $L^\infty$ K\"ahler metric. 

Our observation is, on a compact K\"ahler manifold, for an $L^\infty$ metric defined by $\phi \in \cH^{1,1}_{\omega+}$, we have the following $W^{2,2}$ regularity estimate for a $W^{1,2}$ weak solution for equation $\Delta_{\phi} u = f$. Our $W^{2,2}$ estimate depends only on the uniform ellipticity of $\omega_\phi$, with no smallness assumption on its coefficients. A classical result is that $W^{2, 2}$ estimate holds for uniform elliptic equation with bounded coefficients when (real) dimension is $2$. Our result on K\"ahler manifolds should be viewed as a generalization of such a result since in real dimension 2, every $L^\infty$ metric is indeed K\"ahler (positive definite elliptic coefficients can be interpreted as a metric). 
\begin{thm}\label{prop2.6}
Suppose $\phi \in \cH^{1,1}_{\omega+}$. Suppose $v$ is the $W^{1,2}$ weak solution of equation $\Delta_\phi v = f$ for some $f\in L^2(M)$ with $\int_M f \omega_\phi^n =0$. Then $v \in W^{2,2}(M)$ and 
\begin{equation}
\Delta_{\phi} v = f, 
\end{equation}
holds almost everywhere on $M$. 
\end{thm}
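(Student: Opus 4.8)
The plan is to exploit the K\"ahler identity \eqref{kcondition}, which is the only place the K\"ahler hypothesis is essential. Write the equation $\Delta_\phi v = f$ in divergence form as in \eqref{w2}: setting $G^{i\bar j} = g_\phi^{i\bar j}\det(g_{i\bar j}+\phi_{i\bar j})$ in a local holomorphic chart (a bounded, uniformly elliptic, $L^\infty$ matrix field), the weak formulation reads $\int \p_i(\eta)\, G^{i\bar j}\, v_{\bar j} = -\int \eta\, f\det(g_\phi)$ for test functions $\eta$. The key point is that, by \eqref{kcondition}, $\p_i G^{i\bar j} = 0$ and $\p_{\bar j} G^{i\bar j}=0$ \emph{as distributions} even though $G^{i\bar j}$ is only $L^\infty$ — this is what lets us move freely between the divergence form and the non-divergence form $G^{i\bar j} v_{i\bar j} = f\det(g_\phi)$. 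In fact this identity holds for the $L^\infty$ metric: it is inherited from smooth approximations, or verified directly from the explicit cofactor structure of $G^{i\bar j}$, since the rows of the cofactor matrix of a complex Hessian are divergence-free. I would record this as a preliminary lemma.

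Next I would run a difference-quotient argument, which is now legitimate because of the divergence-free structure. Fix a coordinate ball and a cutoff $\eta$, and for a real direction $e_\alpha$ and small $h$ take the test function $\eta^2 \delta^{-h}_\alpha(\eta^{0}\,\delta^h_\alpha v)$ — more precisely $\delta^{-h}_\alpha(\eta^2 \delta^h_\alpha v)$ — in the weak equation. Using $\p_i G^{i\bar j}=0$ one commutes the difference quotient past $G^{i\bar j}$ at the cost of an error term involving $\delta^h_\alpha G^{i\bar j}$, which, crucially, pairs against \emph{first} derivatives of $v$ and a derivative of $\eta$, not second derivatives: because $\p_i(\delta^h_\alpha G^{i\bar j}) = \delta^h_\alpha(\p_i G^{i\bar j})=0$ as well, the term $\int \p_i(\text{test})\,(\delta^h_\alpha G^{i\bar j})\, v_{\bar j}$ can again be integrated by parts to throw the derivative onto the cutoff. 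The principal term is $\int G^{i\bar j}\,\p_i(\eta\,\delta^h_\alpha v)\,\p_{\bar j}(\eta\,\delta^h_\alpha v)$, which by uniform ellipticity dominates $\epsilon\int |\nabla(\eta\,\delta^h_\alpha v)|^2$. Absorbing the lower-order terms via Cauchy–Schwarz with the standard $W^{1,2}$ bound on $v$ (which exists by the De Giorgi–Nash–Moser / Lax–Milgram theory and depends only on $\|f\|_{L^2}$ and the ellipticity constants) gives $\int_{\{\eta=1\}} |\nabla \delta^h_\alpha v|^2 \le C$ uniformly in $h$. Letting $h\to 0$ and summing over $\alpha$ yields $v\in W^{2,2}_{\mathrm{loc}}$, and a finite cover of $M$ gives $v\in W^{2,2}(M)$. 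Finally, once $v\in W^{2,2}$ the non-divergence identity $G^{i\bar j}v_{i\bar j}=f\det(g_\phi)$, equivalently $\Delta_\phi v = f$, holds a.e.: one passes from the divergence weak form to this by integrating by parts using $\p_i G^{i\bar j}=0$, now justified since $v_{\bar j}\in W^{1,2}$.

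The main obstacle I anticipate is the handling of the error term produced by commuting the difference quotient past the $L^\infty$ coefficient $G^{i\bar j}$. Naively this term is $\int (\delta^h_\alpha G^{i\bar j})\,\p_i(\eta^2\delta^h_\alpha v)\,v_{\bar j}$, and $\delta^h_\alpha G^{i\bar j}$ is not bounded in any norm better than $h^{-1}L^\infty$ — so one cannot estimate it directly. The resolution must be structural: one uses $\p_i(\delta^h_\alpha G^{i\bar j})=0$ to integrate by parts \emph{once more}, transferring the $\p_i$ off of $\eta^2\delta^h_\alpha v$, after which $\delta^h_\alpha G^{i\bar j}$ appears undifferentiated and paired with $v_{\bar j}\cdot v_{\bar k}\cdot\p\eta$ — but this still has an unbounded $h^{-1}$. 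The correct bookkeeping is to never form $\delta^h_\alpha G^{i\bar j}$ at all: instead apply the discrete integration-by-parts identity $\int \delta^{-h}_\alpha(\Phi)\,\Psi = \int \Phi\,\delta^{h}_\alpha(\Psi)$ at the level of the weak equation before differentiating $G$, so that all difference quotients land on $v$ and $\eta$ and the coefficient matrix $G^{i\bar j}$ is evaluated at the shifted point but never differenced. Getting this manipulation clean — essentially mimicking the standard $W^{2,2}$ proof for divergence-form equations but carrying the extra non-divergence identity \eqref{kcondition} through — is the technical heart, and I would expect it to be the longest part of the argument.
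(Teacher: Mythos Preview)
Your difference-quotient plan has a genuine gap, and it is precisely the obstacle you flag in your last paragraph but do not actually resolve. After the discrete integration by parts, the product rule forces
\[
\delta^h_\alpha\big(G^{i\bar j} v_{\bar j}\big)=G^{i\bar j}(\cdot+he_\alpha)\,(\delta^h_\alpha v)_{\bar j}+(\delta^h_\alpha G^{i\bar j})\,v_{\bar j},
\]
and the second summand carries an $O(h^{-1})$ coefficient. Your proposed cures do not close: using $\p_i(\delta^h_\alpha G^{i\bar j})=0$ (or the $\bar j$ analogue) to integrate by parts only trades this term for one containing $v_{i\bar j}$ or $(\delta^h_\alpha v)_{i\bar j}$, which is exactly the quantity you are trying to bound, so the argument is circular; and ``never forming $\delta^h_\alpha G$'' is not possible, because the moment the difference quotient meets the product $G^{i\bar j}v_{\bar j}$ in the weak formulation the discrete Leibniz rule is forced. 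This is not a bookkeeping issue --- it is the same mechanism by which $W^{2,2}$ regularity fails for general divergence-form equations with merely $L^\infty$ coefficients, and the row-wise divergence-free identity for the cofactor matrix is, by itself, not enough to rescue it.

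The paper takes a different route that sidesteps this completely: it never differentiates the rough equation. Instead it smooths the \emph{metric} (Lemma~\ref{lem2.5}), taking $\phi_t\to\phi$ in $W^{2,p}$ with uniform ellipticity, solves $\Delta_{\phi_t}v_t=f_t$ classically, and then invokes the global K\"ahler identity on the compact manifold,
\[
\int_M g_{\phi_t}^{i\bar l}g_{\phi_t}^{k\bar j}\,(v_t)_{i\bar j}(v_t)_{k\bar l}\,\omega_{\phi_t}^n=\int_M(\Delta_{\phi_t}v_t)^2\,\omega_{\phi_t}^n=\int_M f_t^2\,\omega_{\phi_t}^n,
\]
which is exact for smooth K\"ahler metrics and gives a uniform $W^{2,2}$ bound on $v_t$ with no differencing of coefficients at all. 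A uniform first-eigenvalue bound (Lemma~\ref{lem2.6}) controls $\|v_t\|_{W^{1,2}}$, and one passes to the limit to get $v\in W^{2,2}$; the a.e.\ identity $\Delta_\phi v=f$ then follows by integrating by parts against the closed current $\omega_\phi^{n-1}$. The K\"ahler structure enters through this integrated second-order identity, not through the first-order cofactor identity you are trying to exploit.
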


As a direct consequence, we have the following uniqueness result of $\Delta$-weak solution of $\Delta_\phi u = f$ for $\phi \in \cH^{1,1}_{\omega+}$. 
\begin{cor}\label{weakuniqueness}
Suppose $\phi \in \cH^{1,1}_{\omega+}$. If $u \in L^2(M)$ and $u$ is a $\Delta$-weak solution of $\Delta_\phi u = 0$, then $u$ must be a constant. 
\end{cor}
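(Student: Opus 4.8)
The plan is to deduce Corollary \ref{weakuniqueness} from Theorem \ref{prop2.6} together with Proposition \ref{prop2.3}, by a duality (testing) argument. Suppose $u \in L^2(M)$ is a $\Delta$-weak solution of $\Delta_\phi u = 0$, i.e.
\[
\int_M u\,\Delta_\phi \psi\,\omega_\phi^n = 0 \quad \text{for all } \psi \in C^\infty(M).
\]
The idea is that, since $\Delta_\phi$ surjects onto $L^2_0(M) = \{f \in L^2 : \int_M f\,\omega_\phi^n = 0\}$ with $W^{2,2}$ solutions by Theorem \ref{prop2.6}, we can pair $u$ against those solutions to conclude $u$ is orthogonal to $L^2_0(M)$, hence constant.

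The key steps, in order, are as follows. First, given an arbitrary $f \in L^2(M)$ with $\int_M f\,\omega_\phi^n = 0$, let $v \in W^{2,2}(M)$ be the weak solution of $\Delta_\phi v = f$ provided by Theorem \ref{prop2.6}, so that $\Delta_\phi v = f$ holds almost everywhere. Second, since $v \in W^{2,2}(M)$, approximate $v$ in the $W^{2,2}$-norm by smooth functions $\psi_k \in C^\infty(M)$; because the $L^\infty$ bound $\epsilon\omega \le \omega_\phi \le \Lambda\omega$ makes $\Delta_\phi$ a bounded operator from $W^{2,2}(M)$ to $L^2(M)$, we have $\Delta_\phi \psi_k \to \Delta_\phi v = f$ in $L^2(M)$. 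Third, apply the defining identity of the $\Delta$-weak solution to each $\psi_k$ and pass to the limit:
\[
\int_M u\,f\,\omega_\phi^n = \lim_{k\to\infty} \int_M u\,\Delta_\phi \psi_k\,\omega_\phi^n = 0,
\]
using $u \in L^2(M)$ and $\Delta_\phi\psi_k \to f$ in $L^2$ (Cauchy--Schwarz). Since this holds for every $f \in L^2(M)$ with mean value zero, $u$ is $L^2$-orthogonal to the codimension-one subspace $L^2_0(M)$, and therefore $u$ equals its average, i.e. $u$ is constant.

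One technical point to verify carefully is that the defining identity $\int_M u\,\Delta_\phi\psi\,\omega_\phi^n = 0$ for a $\Delta$-weak solution, as stated in Definition \ref{delta}, is legitimately applied with test functions $\psi \in W^{2,\infty}(M)$ or, after the $W^{2,2}$-approximation above, is stable under $W^{2,2}$-limits of smooth test functions when $u \in L^2$; this is exactly the kind of extension remarked after Definition \ref{delta} (``if $u,f$ has stronger regularity\dots we can require test functions $v \in W^{2,p}$''), so one should just note that here $u \in L^2$ suffices to pass to the limit against $\Delta_\phi\psi_k \to f$ in $L^2$, and no pointwise a.e. statement about $\Delta_\phi v$ beyond its $L^2$ identity is needed. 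The main obstacle is therefore not deep: it is simply ensuring that the approximation/duality is carried out in the correct function spaces, i.e. that $\Delta_\phi : W^{2,2}(M) \to L^2(M)$ is bounded (immediate from $\omega_\phi \in L^\infty$) and that $\Delta_\phi$ hits all of $L^2_0(M)$ with $W^{2,2}$ preimages, which is precisely the content of Theorem \ref{prop2.6} combined with the standard existence of a $W^{1,2}$ weak solution of \eqref{w2}. Once these are in place the conclusion is immediate.
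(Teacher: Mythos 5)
Your proof is correct and follows essentially the same route as the paper: obtain the $W^{2,2}$ solution $v$ with $\Delta_\phi v = f$ from Theorem \ref{prop2.6}, approximate $v$ by smooth functions in $W^{2,2}$, and pass to the limit in the $\Delta$-weak identity to conclude $\int_M u f\,\omega_\phi^n = 0$ for all mean-zero $f$. The only cosmetic difference is that you spell out the boundedness of $\Delta_\phi : W^{2,2} \to L^2$ and the duality with $L^2_0(M)$ a bit more explicitly than the paper does.
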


\begin{proof}
Given any $f \in L^{2}(M)$ with $\int_M f \omega_\phi^n = 0$, one could find the $W^{1,2}$ weak solution of $\Delta_\phi u = f$, denoted by $v \in W^{1,2}(M)$. By Theorem $\ref{prop2.6}$, we know that $v \in W^{2,2}(M)$ and $\Delta_\phi v = f$ holds almost everywhere. One can find a smooth approximation of $v$, say $v_\epsilon$, converging to $v$ in $W^{2,2}$ sense. Thus we have
\begin{equation}
\int_M u (\Delta_\phi v_\epsilon) \omega_\phi^n = 0. 
\end{equation}
Since $u \in L^2(M)$, by letting $\epsilon \rightarrow 0$, we have
\begin{equation}
\int_M u f \omega_\phi^n = 0, 
\end{equation}
for any $f \in L^2(M)$ with $\int f \omega_\phi^n = 0$. Such $u$ must be a constant.
\end{proof}

We first make some preparations by introducing the following lemmas and then we will prove Theorem $\ref{prop2.6}$.

\begin{lem}\label{lem2.5}
Given any $\phi \in \cH^{1,1}_{\omega+}$, there exists a smooth family of smooth K\"ahler potentials $\phi_t (0< t \ll 1)$ such that as $t \rightarrow 0^+$,
\begin{enumerate}
\item[i)] $\phi_{t} \rightarrow \phi$ in $W^{2,p}$ space for any $1< p < \infty$; 
\item[ii)]  $\big(\text{essinf } \omega_\phi - o(1) \big) \omega_0 \leq \omega_{\phi_t} \leq \big(\text{esssup} \omega_\phi + o(1)\big) \omega_0$ on $M$. 
\end{enumerate} 

\end{lem}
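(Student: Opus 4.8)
The statement asks us to approximate an $L^\infty$ Kähler metric $\omega_\phi$ (with $\phi \in \cH^{1,1}_{\omega+}$) by smooth Kähler potentials $\phi_t$ that converge in $W^{2,p}$ and whose Hessians are pinched between $\text{essinf}\,\omega_\phi - o(1)$ and $\text{esssup}\,\omega_\phi + o(1)$. The natural approach is to build $\phi_t$ by a mollification procedure adapted to the Kähler setting. I would first work on a single coordinate chart: if $\psi$ is a local potential of $\omega$ (so $\omega = \sqrt{-1}\partial\bar\partial\psi$ locally), then $\psi + \phi$ is a local potential of $\omega_\phi$, and convolving $\psi + \phi$ with a standard mollifier $\rho_\epsilon$ on $\bC^n$ produces a smooth plurisubharmonic-type approximant whose complex Hessian is the mollification of the (a priori only $L^\infty$) Hessian $(\psi+\phi)_{i\bar j}$. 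Since mollification of an $L^\infty$ function converges back in $L^p_{loc}$ for every $p < \infty$, this gives $C^2$ convergence in the weak $L^p$ sense, hence $W^{2,p}$ convergence of the potentials, and the convolution of a matrix-valued function satisfying $\text{essinf}\,\omega_\phi \le (\psi+\phi)_{i\bar j} \le \text{esssup}\,\omega_\phi$ a.e.\ still satisfies the same bounds pointwise (convolution with a probability kernel preserves such two-sided bounds).

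\smallskip
The main technical point is \textbf{globalization}: the local mollifications on overlapping charts do not patch together. The cleanest way around this is one of two standard devices. The first is to use the heat semigroup / Laplacian regularization: set $\phi_t = e^{-t\Delta_\omega}\phi$ (or solve $\partial_s u = \Delta_\omega u$ with $u(0) = \phi$ and take $\phi_t = u(t)$), which is globally well-defined and instantly smoothing; by standard parabolic estimates $\phi_t \to \phi$ in $W^{2,p}$ for all $p$, and the key is to control $\sqrt{-1}\partial\bar\partial\phi_t$ — this follows because $\partial\bar\partial$ commutes with $\Delta_\omega$ up to lower-order curvature terms on a Kähler manifold, so $\omega_{\phi_t}$ is the heat evolution of $\omega_\phi$ plus a term that is $O(t)$ in $L^\infty$, preserving the two-sided bound up to $o(1)$. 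The second, more elementary, device is to use a partition of unity subordinate to a finite chart cover and mollify chart-by-chart as above; the resulting glued potential is smooth and $W^{2,p}$-close to $\phi$, but one must check the Hessian bound survives the partition-of-unity cutoffs — this introduces error terms involving $\partial(\text{cutoff}) \cdot \partial\phi$ and $\partial\bar\partial(\text{cutoff})\cdot\phi$, which are bounded by $C\|\phi\|_{C^{1,1}_w}$ but are \emph{not} $o(1)$, so this second approach is more delicate and may require first subtracting off the local potentials of $\omega$ carefully so that only the genuinely $L^\infty$ piece gets cut off. I would prefer the heat-kernel route precisely to avoid this.

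\smallskip
So the plan in order: (1) define $\phi_t = e^{t\Delta_\omega}\phi$ via the heat semigroup on the compact Kähler manifold $(M,\omega)$, noting it is smooth for $t > 0$ and $\phi_t \to \phi$ in $L^p$ and in $W^{2,p}$ as $t \to 0^+$ by standard semigroup smoothing estimates (here $\phi \in W^{2,p}$ already since $\phi \in \cH^{1,1}_{\omega+}$); (2) differentiate: using the Bochner/Weitzenböck commutation on a Kähler manifold, express $(\phi_t)_{i\bar j}$ in terms of the heat evolution of $\phi_{i\bar j}$, showing $\omega_{\phi_t} = P_t(\omega_\phi) + R_t$ where $P_t$ is a Markov-type operator (preserving two-sided bounds, since it has a positive kernel integrating to $1$) and $\|R_t\|_{L^\infty} = o(1)$ controlled by curvature of $\omega$ and $\|\omega_\phi\|_{L^\infty}$; (3) conclude $(\text{essinf}\,\omega_\phi - o(1))\omega \le \omega_{\phi_t} \le (\text{esssup}\,\omega_\phi + o(1))\omega$; in particular for $t$ small $\omega_{\phi_t}$ is a genuine smooth Kähler form, giving (i) and (ii).

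\smallskip
The step I expect to be the \textbf{main obstacle} is (2): controlling $\omega_{\phi_t}$ pointwise. The heat semigroup acts on \emph{functions}, not directly on the two-form $\omega_\phi$, and the identity relating $\partial\bar\partial e^{t\Delta}\phi$ to a heat-type evolution of $\partial\bar\partial\phi$ is exactly where the Kähler condition enters (on a non-Kähler manifold the commutator of $\partial\bar\partial$ with $\Delta$ is worse). One has to be careful that the curvature correction terms are genuinely lower order and vanish as $t \to 0$, and that the positivity/Markov property of the relevant operator $P_t$ on symmetric $(1,1)$-tensors really does preserve the bound $\text{essinf}\,\omega_\phi \cdot \omega \le \cdot \le \text{esssup}\,\omega_\phi \cdot \omega$ — this requires the maximum principle for the (tensorial) heat equation with the curvature term treated as a bounded zeroth-order perturbation, which is why only $o(1)$, not exact, bounds are claimed. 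An alternative that sidesteps the tensorial maximum principle entirely is to run the argument locally after all (mollify $\psi+\phi$ in charts) and only use a global partition of unity to glue, accepting the slightly weaker but still sufficient conclusion; given the statement explicitly allows $o(1)$ slack, either route should close, and I would present whichever makes the Hessian bound most transparent.
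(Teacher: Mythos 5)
Your preferred (heat-kernel) route is exactly what the paper does: the authors define $\phi_t(x) = \int_M H(x,y;t)\,\phi(y)\,\omega_0^n(y)$ using the heat kernel of a fixed smooth background Riemannian metric, and simply state that (i) and (ii) can be verified directly. Your discussion of the commutator/Bochner correction and the Markov averaging property of the heat kernel supplies precisely the details the paper elides.
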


\begin{proof}
Denote $H \in C^\infty\big(M \times M \times (0,\infty) \big)$ to be the heat kernel of a smooth Riemannian metric, for instance the background metric $\omega_0$. Define 
\begin{equation}
\phi_t(x) := \int_M H(x, y; t) \phi(y) \omega_0^n(y). 
\end{equation} 
We could verify directly that $\phi_t$ defined as above satisfies i) and ii). 

\end{proof}

\begin{lem}\label{lem2.6}
For any $L^\infty$ Riemannian metric $g$, we define
\begin{equation}
\lambda_1(g) := \inf \{ \frac{\int_M |\nabla u|_g^2 dvol_g}{\int_M u^2 dvol_g} \big| u \in W^{1,2}(M), \int u dvol_g = 0. \} 
\end{equation}
which is a positive constant depending on $g$. For any $K >1$, there exists a constant $\epsilon_{K, n} > 0$ such that for any $L^\infty$ Riemannian metric $h$ with $K^{-1} g \leq h \leq K g$, we have 
\begin{equation}
\lambda_1(h) \geq \epsilon_{K,n} \lambda_1(g).
\end{equation}

\end{lem}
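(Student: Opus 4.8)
\textbf{Proof proposal for Lemma \ref{lem2.6}.}

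The plan is to compare the two Rayleigh quotients directly, using the two-sided bound $K^{-1}g \leq h \leq Kg$ to control both the numerator and the denominator, together with a comparison of the two zero-average constraints. First, observe that for an $L^\infty$ metric the volume form $dvol_h$ and the energy density $|\nabla u|_h^2$ are comparable to those of $g$: pointwise one has $|\nabla u|_h^2 \geq c(K,n) |\nabla u|_g^2$ and $C(K,n)^{-1} dvol_g \leq dvol_h \leq C(K,n) dvol_g$, where the constants depend only on $K$ and $n$ (they come from the determinant and inverse of a matrix pinched between $K^{-1}\mathrm{Id}$ and $K\,\mathrm{Id}$). This immediately gives $\int_M |\nabla u|_h^2 \, dvol_h \geq c'(K,n) \int_M |\nabla u|_g^2 \, dvol_g$ and $\int_M u^2 \, dvol_h \leq C'(K,n) \int_M u^2 \, dvol_g$ for a single function $u$.

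The subtlety is that the infimum defining $\lambda_1(h)$ is taken over functions with $\int_M u \, dvol_h = 0$, while $\lambda_1(g)$ uses $\int_M u \, dvol_g = 0$; these are different linear constraints, so a test function admissible for one problem need not be admissible for the other. To handle this I would take an arbitrary $u \in W^{1,2}(M)$ with $\int_M u \, dvol_h = 0$ and replace it by $\tilde u = u - a$ where $a = \frac{1}{\mathrm{Vol}_g(M)}\int_M u \, dvol_g$, so that $\int_M \tilde u \, dvol_g = 0$ and $\tilde u$ is admissible for the $g$-problem. The gradient is unchanged, so $\int_M |\nabla \tilde u|_g^2 \, dvol_g = \int_M |\nabla u|_g^2 \, dvol_g$. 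It remains to bound $\int_M \tilde u^2 \, dvol_g$ from below in terms of $\int_M u^2 \, dvol_h$, equivalently to show the shift $a$ is small relative to $\|u\|_{L^2(dvol_h)}$. Using $\int_M u\, dvol_h = 0$ one writes $a\,\mathrm{Vol}_g(M) = \int_M u\,(dvol_g - dvol_h\cdot \tfrac{\text{something}}{\cdots})$ — more cleanly, $a = \frac{1}{\mathrm{Vol}_g(M)}\int_M u\,(dvol_g - dvol_h)$ after normalizing, and since $dvol_g$ and $dvol_h$ are comparable with constants depending only on $K,n$, Cauchy--Schwarz gives $|a| \leq C(K,n)\,\mathrm{Vol}(M)^{-1/2}\|u\|_{L^2(dvol_h)}$, hence $\int_M \tilde u^2\,dvol_g \leq C(K,n)\int_M u^2\,dvol_h$. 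Actually the slickest route is simply: since $\tilde u = u - a$ minimizes $\|u - c\|_{L^2(dvol_g)}$ over constants $c$ only up to a bounded factor of the $h$-mean-zero projection, one gets $\int_M \tilde u^2 \, dvol_g \leq C(K,n) \int_M u^2\,dvol_h$ by a one-dimensional argument comparing the two orthogonal projections onto constants.

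Combining the three comparisons:
\begin{equation}
\frac{\int_M |\nabla u|_h^2\,dvol_h}{\int_M u^2\,dvol_h} \geq \frac{c'(K,n)\int_M |\nabla \tilde u|_g^2\,dvol_g}{C(K,n)\int_M \tilde u^2\,dvol_g} \geq \frac{c'(K,n)}{C(K,n)}\,\lambda_1(g),
\end{equation}
and taking the infimum over admissible $u$ yields $\lambda_1(h) \geq \epsilon_{K,n}\lambda_1(g)$ with $\epsilon_{K,n} = c'(K,n)/C(K,n)$ depending only on $K$ and $n$. The main obstacle, and the only place requiring care, is the constraint-switching step: making sure that the error incurred by projecting onto the $g$-mean-zero subspace instead of the $h$-mean-zero subspace is controlled by a constant depending only on $K$ and $n$, and in particular does not degenerate. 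This follows because the two volume forms are uniformly comparable, so the two hyperplanes of mean-zero functions make a uniformly bounded angle in $L^2$; everything else is the routine pinching of determinants and inverses of matrices trapped between $K^{-1}\mathrm{Id}$ and $K\,\mathrm{Id}$.
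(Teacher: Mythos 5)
Your proof is correct and follows essentially the same strategy as the paper: pinch the metrics to compare numerators and denominators, shift $u$ by its $g$-mean to produce an admissible test function $\tilde u$ for the $g$-problem, and control the resulting $L^2$ norm by exploiting that the shift is an orthogonal projection onto constants. The paper expresses the last step slightly differently (using that $u$ is the $dvol_h$-mean-zero projection of $\tilde u$, so $\int u^2\,dvol_h \leq \int \tilde u^2\,dvol_h$), but this is the mirror image of your observation that $\tilde u$ is the $dvol_g$-mean-zero projection of $u$, and both yield the same constant up to the routine volume-form comparison.
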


\begin{proof}
Given any $u \in W^{1,2}(M)$ with $\int_M u dvol_h = 0$, we consider
\begin{equation}
\frac{\int_M |\nabla u|_{h}^2 dvol_h}{\int_M u^2 dvol_h } \geq \epsilon_{K, n} \frac{\int_M |\nabla u|_{g}^2 dvol_g}{\int_M u^2 dvol_h}
\end{equation}
Denote $\tilde u = u - \frac{1}{V(g)} \int_M u dvol_g$ and then we have
\begin{equation}
 u = \tilde u - \frac{1}{V(h)}\int_M \tilde u dvol_h. 
\end{equation}
Thus
\begin{equation}
\begin{split}
\int_M u^2 dvol_h = & \int_M (\tilde u - \frac{1}{V(h)}\int_M \tilde u dvol_h)^2 dvol_h \\
= & \int_M \tilde u^2 dvol_h - \frac{1}{V(h)}(\int_M \tilde u dvol_h)^2\\
\leq & C_{K, n} \int_M \tilde u^2 dvol_g. 
\end{split}
\end{equation}
Thus putting these estimates together, we have
\begin{equation}
\frac{\int_M |\nabla u|_{h}^2 dvol_h}{\int_M u^2 dvol_h } \geq \epsilon_{K, n} \frac{\int_M |\nabla \tilde u|_{g}^2 dvol_g}{\int_M \tilde u^2 dvol_g} \geq \epsilon_{K, n} \lambda_1(g).
\end{equation}

\end{proof}

\begin{proof}
Proof of Theorem $\ref{prop2.6}$. Denote $\lambda : = \text{essinf } \omega_\phi$ and $\Lambda : = \text{esssup } \omega_\phi$. 

Suppose $\phi_t$ for $0<t \ll1$ is the smooth approximation of $\phi \in \cH^{1,1}_{\omega+}$ obtained in Lemma $\ref{lem2.5}$ and $f_t (0< t \ll 1)$ is a smooth approximation of $f \in L^2(M)$ under the $L^2$ norm. Without loss of generality, we can assume $\int_M f_t \omega_{\phi_t}^n = 0$ for all $0 < t \ll 1$. Then for any $0 < t \ll 1$, there exists $v_t \in C^\infty(M)$ with $\int_M v_t \omega_{\phi_t}^n = 0 $ such that $\Delta_{\phi_t} v_t = f_t$ holds smoothly on $M$. 

We have
\begin{equation}
\int_M |\nabla v_t|_{\omega_{\phi_t}}^2 \omega_{\phi_t}^n = -\int_M f_t v_t \omega_{\phi_t}^n \leq C \|f_t\|_{L^2(M)} \big(\int_M v_t^2 \omega_{\phi_t}^n\big)^{\frac{1}{2}}, 
\end{equation}
for some constant $C$ depending on $\Lambda$. Since for $t \ll 1$, $g_{\phi_t}$'s are uniformly bounded, by Lemma $\ref{lem2.6}$, there exists a constant $C$ depending on $\lambda, \Lambda$ and dimension $n$ such that
\begin{equation}
 \int_M v_t^2 \omega_{\phi_t}^n \leq C \int_M |\nabla v_t|_{\omega_{\phi_t}}^2 \omega_{\phi_t}^n. 
\end{equation}
Thus we have for $t \ll 1$,
\begin{equation}
\|v_t\|_{W^{1,2}(M)} \leq C \|f\|_{L^2(M)}.
\end{equation}
for some constant $C> 0$ depending on $\lambda, \Lambda$ and dimension $n$.

Compute for $t$ sufficiently small, 
\begin{equation}
\int_{M} g_{\phi_t}^{i\bar l} g_{\phi_t}^{k \bar j} v_{t, i\bar j} v_{t, k\bar l} \omega_{\phi_t}^n = \int_M ( \Delta_{\phi_t} v_t)^2 \omega_{\phi_t}^n = \int_M f_t^2 \omega_{\phi_t}^n \leq C \|f\|_{L^2(M)}^2,  
\end{equation}
and consequently
\begin{equation}
\limsup_{t \rightarrow 0^+ } \big\| |\partial \bar \partial v_t |_{\omega_0}\big\|_{L^2(M)} \leq C \|f\|_{L^2(M)}, 
\end{equation}
for some constant $C > 0$ depending on $\lambda$, $\Lambda$ and the dimension $n$ only. Also we have for $t$ sufficiently small
\begin{equation}
\begin{split}
\int_{M} g_0^{i\bar l} g_0^{k \bar j} v_{t, ik} v_{t, \bar l \bar j} \omega_0^n = & \int_M (\Delta_{\omega_0} v_t)^2 \omega_0^n - \int_M (\text{Ric}_{\omega_0})_{i\bar j} g_0^{i \bar p} g_0^{q \bar j} v_{t, \bar p} v_{t, q} \omega_0^n, \\
\leq  &C \big \||\partial \bar \partial v_t |_{\omega_0}\big \|_{L^2(M)}^2 + C \|v_t\|_{W^{1,2}(M)}
\end{split}
 \end{equation}
where subscript ``," means covariant derivative with respect to background metric $\omega_0$. Thus we have
\begin{equation}
\limsup_{t \rightarrow 0^+} \|v_t\|_{W^{2,2}(M)} \leq C \|f\|_{L^2(M)},
\end{equation}
for some constant $C > 0$ depending on $\lambda$, $\Lambda$, $n$ and background metric $\omega_0$ only.

Therefore, as $t \rightarrow 0$, $v_t$'s admits a convergent subsequence converging in $W^{1,2}$ norm to some limit function $v_0 \in W^{2,2}(M)$. For any fixed text function $\psi\in C^\infty(M)$, we have
\begin{equation}
\frac{1}{2}\int_M  { d} \psi \wedge d^c v_t \wedge \omega_{\phi_t}^{n-1} =  - \int_M f_t \psi \omega_{\phi_t}^n. 
\end{equation}
Letting $t \rightarrow 0$, we get 
\begin{equation}
\frac{1}{2}\int_M   d \psi \wedge d^c v_0 \wedge \omega_\phi^{n-1} = - \int_M f \psi \omega_{\phi}^n, 
\end{equation}
for any $\psi \in C^\infty(M)$. This tells us the limit $v_0$ is actually the unique $W^{1,2}$ weak solution to $\Delta_\phi u = f$, which equals to $v$. By our previous discussion, $v = v_0$ now has $W^{2,2}$ regularity, we have that for any $\psi \in C^{\infty}(M)$
\begin{equation}\label{eqn2.17}
\frac{1}{2}\int_M \psi dd^c v \wedge \omega_{\phi}^{n-1} =  -\frac{1}{2} \int_M d \psi \wedge d^c v \wedge \omega_\phi^{n-1} = \int_M f \psi \omega_{\phi}^n. 
\end{equation}
This is essentially because $\omega_{\phi}^{n-1}$ is a closed positive $(n-1, n-1)$ current. More precisely, similar to the argument in Proposition $\ref{prop2.3}$, we suppose $v_\epsilon$ is a smooth approximation of $v$ in $W^{2,2}$ space, and consider smooth $1$-form $\alpha = \psi d^c v_\epsilon$ with $d \alpha = d\psi \wedge d^c v_\epsilon + \psi dd^c v_\epsilon$. Then we have that
\begin{equation}
\int_M  ( d\psi \wedge d^c v_\epsilon + \psi dd^c v_\epsilon )\wedge \omega_{\phi}^{n-1} = \int_M  d\alpha \wedge \omega_{\phi}^{n-1}= 0.
\end{equation}
Let $\epsilon \rightarrow 0$, we have $(\ref{eqn2.17})$. 
Thus we have $v \in W^{2,2}(M)$ with $\Delta_\phi v = f$ almost everywhere.
\end{proof}

Now we can prove Theorem \ref{main0} and Theorem \ref{main}.

\begin{proof}[Proof of Theorem \ref{main0}]First an $L^\infty$ K\"ahler metric which minimizes $\cK$ energy is a $\Delta$-weak solution of CSCK, $R_u=\underline{R}$. 
Let $\phi\in \cH^{1, 1}_{w_+}$ be the potential and $V_\phi$ be the log volume ratio. Then $V_\phi$ satisfies the equation in $\Delta$-weak sense,
\[
\Delta_\phi V_\phi=g^{i\bar j}_\phi R_{i\bar j}-\underline{R}. 
\]
Next we consider $u\in W^{1, 2}$ such that for any $\psi\in C^\infty$,
\[
\int_M g^{i\bar j}_\phi u_i \psi_{\bar j} \omega_\phi^n=\int_M \psi (-g^{i\bar j}_\phi R_{i\bar j}+\underline{R})\omega_\phi^n
\]
In other words, $u$ is a $W^{1, 2}$ solution of the equation \[\Delta_\phi u=g^{i\bar j}_\phi R_{i\bar j}-\underline{R}.\] Then standard elliptic regularity theory implies $u\in C^{\alpha}$. 
Note that $u$ is also a $\Delta$-weak solution, as well as $V_\phi$. 
By the uniqueness result, we know that $V_\phi-u$ is a constant. Then $V_\phi$ is also a $W^{1, 2}$ weak solution and hence it is $C^\alpha$. It then follows that $\phi$ is smooth.  We refer to Section 6 for the details of a priori estimates which are independent of $\epsilon$. 
\end{proof}

It would be an interesting problem to obtain such a regularity result locally' namely, drop the smallness assumption, which is considered in Section 3. But it seems that there are some subtle difficulties. Compactness does play a role in our arguments and we will discuss this below. 
First it is not very hard to extend our $W^{2, 2}$ estimates on compact K\"ahler manifolds to the following interior $W^{2, 2}_{loc}$ estimate.

\begin{thm}\label{local1}Let $u_{i\bar j}$ be an $L^\infty$ metric such that $\lambda^{-1}\delta_{ij}\leq u_{i\bar j}\leq \lambda\delta_{ij}$.
 Consider the following linear equation over the unit ball $B$ for $v\in W^{1, 2}$ and $f\in L^2$, 
\[
\p_i\left(\det(u_{i\bar j})u^{i\bar j}\p_{\bar j} v\right)=f\det(u_{i\bar j})
\]
then we have $v\in W^{2, 2}_{loc}$. Moreover it solves the equation in the strong sense that
\[
u^{i\bar j}\p^2_{i\bar j} v=f. 
\]
\end{thm}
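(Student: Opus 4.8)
The plan is to reduce the interior estimate to the global one on a compact K\"ahler manifold, so that Theorem \ref{prop2.6} can be applied as a black box. First I would localize: fix a slightly smaller ball $B' \Subset B$ and a cutoff $\eta$ with $\eta \equiv 1$ on $B'$ and $\mathrm{supp}\,\eta \subset B$. The key point is that the divergence-form operator $\p_i(\det(u_{i\bar j}) u^{i\bar j}\p_{\bar j}\,\cdot\,)$ is, up to the positive factor $\det(u_{i\bar j})$, precisely the Laplacian $\Delta_\omega$ of the $L^\infty$ K\"ahler metric $\omega = \sqrt{-1}\,u_{i\bar j}\,dz^i\wedge d\bar z^j$ on $B$ (this is the K\"ahler identity \eqref{kcondition} used throughout the paper, which makes the divergence form and the non-divergence form $u^{i\bar j}\p^2_{i\bar j}$ agree for smooth metrics and lets us pass between them in the limit). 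So on $B$ we have a uniformly elliptic $L^\infty$ K\"ahler metric with $\lambda^{-1}\delta_{ij}\le u_{i\bar j}\le \lambda\delta_{ij}$, and $v\in W^{1,2}$ is a $W^{1,2}$ weak solution of $\Delta_\omega v = f$ there.

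Next I would transplant the metric and the equation to a compact manifold. Choose a smooth K\"ahler metric $\omega_0$ on a compact K\"ahler manifold $X$ (for instance a complex torus $\KC^n/\Lambda$, or a small ball glued into $\bP^n$) containing an open set biholomorphic to $B$, and interpolate: set $\tilde\omega := \chi\,\omega + (1-\chi)\,\omega_0$ where $\chi$ is a cutoff equal to $1$ on a neighborhood of $\overline{B'}$ and supported in $B$. Since $\omega$ is uniformly equivalent to the Euclidean metric on $B$ and $\omega_0$ is smooth, $\tilde\omega$ is an $L^\infty$ K\"ahler metric on $X$ with uniform two-sided bounds; moreover, after correcting by the $\ddbar$ of a global potential one may arrange $\tilde\omega = \omega_{\tilde\phi}$ for some $\tilde\phi \in \cH^{1,1}_{\omega_0+}$ (the class being that of $\omega_0$, say, after scaling), so that Lemma \ref{lem2.5}, Lemma \ref{lem2.6} and Theorem \ref{prop2.6} all apply verbatim to $\tilde\omega$. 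Now consider the global datum $\tilde f := \Delta_{\tilde\omega}(\eta v)$ computed distributionally; on the region where $\chi \equiv 1$ this equals $\eta f$ plus first-order terms in $v$ (coming from $\nabla\eta$ and $\nabla^2\eta$), all of which lie in $L^2$ because $v \in W^{1,2}$ and $f \in L^2$; off that region $\eta v$ is supported where we have freedom. After subtracting the mean of $\tilde f$ against $\tilde\omega^n$, apply Theorem \ref{prop2.6} to conclude that the $W^{1,2}$ weak solution $w$ of $\Delta_{\tilde\omega} w = \tilde f - \text{const}$ lies in $W^{2,2}(X)$ and satisfies the equation a.e. By the uniqueness of $W^{1,2}$ weak solutions (standard De Giorgi--Nash--Moser, or Corollary \ref{weakuniqueness}), $w$ differs from $\eta v$ by a constant on the region where the metrics and right-hand sides agree, hence $\eta v \in W^{2,2}$ there; restricting to $B'$ gives $v \in W^{2,2}(B')$. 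Finally, since $v \in W^{2,2}_{loc}$ and the K\"ahler identity forces the divergence form to coincide with $u^{i\bar j}\p^2_{i\bar j}v$ almost everywhere once the second derivatives exist in $L^2$, we get $u^{i\bar j}\p^2_{i\bar j}v = f$ in the strong sense.

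The main obstacle I expect is the transplantation step: one must ensure that the glued metric $\tilde\omega$ is genuinely of the form $\omega_{\tilde\phi}$ for an $L^\infty$-bounded, uniformly positive K\"ahler potential $\tilde\phi$ on a compact $X$, because Theorem \ref{prop2.6} is stated for potentials in $\cH^{1,1}_{\omega_0+}$, not for abstract $L^\infty$ K\"ahler metrics. Since $\tilde\omega$ agrees with $\omega_0$ outside a ball and is cohomologous to $\omega_0$ (the difference $\chi(\omega - \omega_0)$ is exact on the ball and we are working in the class $[\omega_0]$), one can write $\tilde\omega - \omega_0 = \sqrt{-1}\,\ddbar\tilde\phi$ for some $\tilde\phi$ with $\sqrt{-1}\,\ddbar\tilde\phi \in L^\infty$; a short elliptic-regularity argument (the potential solves a Poisson-type equation with $L^\infty$ right-hand side) gives $\tilde\phi \in C^{1,\bar 1}_w$ with the required uniform bounds, so $\tilde\phi \in \cH^{1,1}_{\omega_0+}$. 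Once this bookkeeping is in place the rest is a routine repetition of the cutoff estimates already carried out in the proof of Theorem \ref{prop2.6}, and I would only sketch those, emphasizing that all constants depend only on $\lambda$, $n$, the choice of $B' \Subset B$, and the fixed background data.
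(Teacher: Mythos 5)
Your approach is genuinely different from the paper's: you try to reduce the interior estimate to the global Theorem \ref{prop2.6} by transplanting the metric to a compact K\"ahler manifold, whereas the paper argues purely locally, approximating $u$ by smooth metrics $u_\beta$, solving the Dirichlet problem $\p_i(\det(u_\beta)u_\beta^{i\bar j}\p_{\bar j}v_\beta)=f_\beta\det(u_\beta)$ on $B_R$ with boundary data $v$, deriving a uniform interior $W^{2,2}$ bound by integrating $\eta^2(\Delta_\beta v_\beta)^2$ by parts using the K\"ahler identity, and then passing to the limit. The paper's route is more elementary and avoids gluing entirely.

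There is a genuine gap in your transplantation step. The interpolated form $\tilde\omega=\chi\omega+(1-\chi)\omega_0=\omega_0+\chi(\omega-\omega_0)$ is positive but \emph{not closed}: $d\tilde\omega=d\chi\wedge(\omega-\omega_0)\neq 0$ in general. Consequently it is not a K\"ahler metric and cannot be written as $\omega_0+\sqrt{-1}\ddbar\tilde\phi$; your appeal to the $\ddbar$-lemma fails precisely because the $\ddbar$-lemma applies only to $d$-closed forms, and $\chi(\omega-\omega_0)$ is not $d$-closed. The obvious correction, setting $\tilde\omega=\omega_0+\sqrt{-1}\ddbar(\chi\varphi)$ where $\omega-\omega_0=\sqrt{-1}\ddbar\varphi$ locally, does produce a closed form but generates error terms $\varphi\,\ddbar\chi$ and $\p\chi\wedge\bar\p\varphi$ of size comparable to $\lambda$ (the ellipticity constant), which can destroy positivity; these errors are scale-invariant, so shrinking the ball does not help. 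One can rescue your strategy by gluing at the level of potentials using a regularized maximum $M_\epsilon(u,\ K|z|^2-K')$ with a \emph{fixed} (non-shrinking) smoothing parameter $\epsilon\sim 1$ and suitably large $K,K'$: this produces a K\"ahler potential $\tilde\phi$ on a large flat torus that agrees with $u-K|z|^2/2+K'$ on an inner ball, with $\ddbar\tilde\phi$ bounded above and $K\delta+\ddbar\tilde\phi$ uniformly positive, so Theorem \ref{prop2.6} applies. But this is a substantially different argument from the $\chi\omega+(1-\chi)\omega_0$ interpolation you proposed, and you would need to verify the quantitative bounds carefully. Given the extra work, the paper's direct local computation is the cleaner route.
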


\begin{proof}The proof is purely local. By standard elliptic theory, $v\in W^{1, 2}\cap C^\alpha$ for some $\alpha>0$. Consider the following approximation of $u$ by $u_\beta\in C^\infty$, such that 
\[
(2\lambda)^{-1}\delta_{ij}\leq (u_\beta)_{i\bar j}\leq 2\lambda\delta_{ij}
\]
holds for on $B_R$ ($R$ is a fixed number in $(7/8, 1)$ and we will take $R\rightarrow  1$ finally) and $u_\beta\rightarrow u\in W^{2, p}_{loc}$ for $p$ sufficiently large. Let $f_\beta\rightarrow f$ in $L^2$ and $f_\beta\in C^\infty$.
We solve the equation over $B_R$, for each $\beta$,
\[
\p_i\left(\det((u_\beta)_{i\bar j})u^{i\bar j}_\beta\p_{\bar j} v_\beta\right)=f_\beta\det((u_\beta)_{i\bar j})
\]
with the boundary restriction $v_\beta=v$ on $\p B_R$, where the boundary condition is understood in the sense that $v_\beta-v\in W^{1, 2}_0(B_R)$. Then $v_\beta\in C^\infty(B_R).$ We want to show that $v_\beta\in W^{2, 2}_{loc}(B_R)$ with uniform bounds.  Choose a smooth cut-off function $\eta\in C^{\infty}(B_R)$, supported in $B_R$, and $\eta=1$ in $B_{3/4}$. Then the standard computation gives
\[
\int_{B} (u^{i\bar j}_\beta (v_\beta)_{i\bar j})\eta^2 (u^{p\bar q}_\beta (v_\beta)_{p\bar q})\det((u_\beta)_{i\bar j})=-\int_B \eta^2 f_\beta\det((u_\beta)_{i\bar j}) u^{p\bar q}_\beta (v_\beta)_{p\bar q}
\]
A direct computation gives
\[
\int_{B_{3/4}} u^{i\bar j}_\beta u^{p\bar q}_\beta (v_\beta)_{, i\bar q} (v_\beta)_{, p\bar j} \det((u_\beta)_{i\bar j})\leq C(\|f_k\|_{L^2}, \lambda, |\nabla \eta|)
\]
Hence it gives the following,
\[
\int_{B_{3/4}}|(v_\beta)_{i\bar j}|^2\leq C,
\]
for some uniformly bounded constant $C$. 
This gives that $v_\beta \in W^{2, 2}(B_{1/2})$ uniformly. Clearly the same argument applies to $B_r\subset B_R$ for any $r<R$. It follows that $v_\beta\in W^{2, 2}_{loc}(B_R)\cap W^{1, 2}(B_R)$ (after bypassing to a subsequence if necessary). In particular we have
\[
u^{i\bar j}_\beta (v_\beta)_{i\bar j}=f_\beta. 
\]
Now let $\beta\rightarrow \infty$,  $v_\beta$ converges to a solution of the equation over $B_R$
\[
\p_i\left(\det(u_{i\bar j})u^{i\bar j}\p_{\bar j} w\right)=f\det(u_{i\bar j})
\]
and $w=v$ on $\p B_R$. In particular $w\in W^{2, 2}_{loc}(B_R)$ and $w$ solves $u^{i\bar j}w_{i\bar j}=f$ in strong sense. By the uniqueness we know that $w=v$, hence $v\in W^{2, 2}_{loc}(B_R)$. Finally we can let $R\rightarrow 1$. This completes the proof.\end{proof}

We would like to ask whether the local regularity result for constant scalar curvature equation holds,

\begin{prob}\label{local} Let $u_{i\bar j}$ be an $L^\infty$ K\"ahler metric such that $(u_{i\bar j})$ gives a $\Delta$-weak solution of $R_u=\underline{R}$. Can we prove that $u$ is indeed smooth locally?
\end{prob}

The local $W^{2, 2}$ estimates seems not to be sufficient. Indeed we need a version of uniqueness result as the following, 

\begin{prob}\label{weakuniqueness2}Let $u_{i\bar j}$ be an $L^\infty$ K\"ahler metric over $B$ and $v$ be a $\Delta$-weak solution of $\Delta_u v=0$ in the sense that
\[
\int_B v u^{i\bar j}\p^2_{i\bar j}\phi \det(u_{i\bar j})=0
\]
for any $\phi\in C^\infty_0(B)$. Suppose $v\in L^\infty$ and $v=0$ on $\p B$. Is that true  $v=0$?
\end{prob}

This is a local version the uniqueness result in Corollary \ref{weakuniqueness}. But it seems that there are some tricky problems. One point is that $v$ is the $\Delta$-weak solution of the harmonic equation $\Delta_u v=0$ with only $L^\infty$ coefficients. On compact manifolds, we have shown that the only harmonic solutions are constants using the global $W^{2, 2}$ estimates. But this argument cannot be extended directly to the local setting. For example local harmonic functions exist in abundance. Problem \ref{weakuniqueness2} asks whether one can prescribe a reasonable boundary condition (zero boundary value) to show that a $\Delta$-weak harmonic function with $L^\infty$ coefficients is zero. One main trouble is that the definition of $\Delta$-weak harmonic function is not up to boundary (the test functions have compact supports).  Another difficulty is that for a measurable function (say an $L^\infty$ function), it is tricky to prescribe a sensible ``zero boundary value". Hence Problem \ref{local} remains interesting to be studied, including how to propose boundary value, for example. Of course if we assume that $u$ is smooth near boundary, one can get interior local smoothness for a weak CSCK. But such an assumption seems to be too artificial and we skip the details.

Next we discuss extremal metrics and modified $K$-energy briefly. We have the following.
\begin{thm} An invariant $L^\infty$ K\"ahler metric which minimizes modified $K$-energy is a smooth extremal K\"ahler metric. 
\end{thm}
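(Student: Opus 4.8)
The plan is to reduce the statement about modified $K$-energy minimizers to Theorem~\ref{main0} by adapting each step of its proof to the equivariant setting. First I would recall the setup: fix a maximal compact subgroup $G\subset\Aut_0(M,[\omega])$ and work on the space $\cH_\omega^G$ of $G$-invariant K\"ahler potentials. The modified $K$-energy $\cK_X$ is defined via its variation $\delta\cK_X=-\int_M(\delta\phi)(R_\phi-\underline R-\theta_X(\phi))\omega_\phi^n$, where $\theta_X$ is the (normalized) holomorphy potential of the extremal vector field $X$, so that a critical point among invariant variations satisfies the twisted fourth-order equation $R_\phi-\underline R-\theta_X(\phi)=0$. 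Exactly as in Section~2, an invariant $L^\infty$ K\"ahler metric $\omega_\phi$ minimizing $\cK_X$ over $\cH^{1,1}_{\omega+}\cap\{G\text{-invariant}\}$ is a $\Delta$-weak solution in the sense that the log volume ratio $V_\phi$ satisfies
\[
\Delta_\phi V_\phi = g^{i\bar j}_\phi R_{i\bar j}-\underline R-\theta_X(\phi)
\]
in the distributional sense against invariant $C^{1,\bar 1}$ test functions. Here one must check that the holomorphy potential $\theta_X(\phi)$, which a priori depends on $\omega_\phi$, is still a well-defined bounded function: since $X$ is a fixed holomorphic vector field and $\omega_\phi$ is an $L^\infty$ metric uniformly equivalent to $\omega$, the potential $\theta_X(\phi)$ is determined by $\iota_X\omega_\phi=\bar\partial\theta_X(\phi)$ up to the normalization $\int_M\theta_X(\phi)\omega_\phi^n=0$, and one gets $\theta_X(\phi)=\theta_X(0)+X(\phi)$ up to a constant, hence $\theta_X(\phi)\in L^\infty$ (indeed Lipschitz since $\phi\in C^{1,\bar 1}$).

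With this in hand the argument follows the proof of Theorem~\ref{main0} verbatim. I would solve the linear divergence-form equation
\[
\Delta_\phi u = g^{i\bar j}_\phi R_{i\bar j}-\underline R-\theta_X(\phi)
\]
for $u\in W^{1,2}$ (the right side is in $L^2$ with integral zero against $\omega_\phi^n$ by the definition of $\underline R$ and the normalization of $\theta_X$), which by the De Giorgi--Nash--Moser theory gives $u\in C^\alpha$, and by Theorem~\ref{prop2.6} gives $u\in W^{2,2}$. Since both $u$ and $V_\phi$ are $\Delta$-weak solutions of the same equation, Corollary~\ref{weakuniqueness} forces $V_\phi-u$ to be constant, so $V_\phi\in C^\alpha$. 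By the complex Monge--Amp\`ere regularity theory of \cite{wangyu} this yields $\phi\in C^{2,\alpha}$, and then Schauder bootstrapping — now for the genuinely fourth-order twisted equation $R_\phi=\underline R+\theta_X(\phi)$, whose right side gains regularity in tandem with $\phi$ — gives $\phi\in C^\infty$. Thus $\omega_\phi$ is a smooth K\"ahler metric satisfying $R_\phi-\underline R=\theta_X(\phi)$, i.e. $\nabla^{1,0}R_\phi=\nabla^{1,0}\theta_X(\phi)$ is (a constant multiple of) the holomorphic vector field $X$; this is precisely the extremal equation, so $\omega_\phi$ is a smooth extremal K\"ahler metric.

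The two places requiring genuine care, rather than routine transcription, are the following. First, one must verify that the uniqueness statement Corollary~\ref{weakuniqueness} and the $W^{2,2}$ estimate Theorem~\ref{prop2.6} survive when one restricts test functions to $G$-invariant ones: this is automatic because one can always average a test function over the compact group $G$ (the Haar average of a $C^\infty$ function is $C^\infty$ and invariant, and pairing against $V_\phi$, itself invariant, is unaffected by averaging), so no new analysis is needed. Second — and this is the main obstacle — one must justify that minimizing $\cK_X$ over the \emph{invariant} potentials still produces a critical point of the full equation, i.e. that the Euler--Lagrange equation obtained by testing only against invariant variations is the same twisted CSCK equation one would obtain from all variations; this is the standard principle of symmetric criticality, valid because $G$ acts by isometries and $\cK_X$ is $G$-invariant, but in the weak/$L^\infty$ setting it should be spelled out that the distributional identity against invariant test functions, combined with invariance of $V_\phi$, already gives the full distributional equation (again via Haar averaging of arbitrary test functions). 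Once these two points are settled the proof is complete; I would expect to present it simply as "with slight modification of the arguments" and sketch only the construction of $\theta_X(\phi)$ and the averaging trick in detail.
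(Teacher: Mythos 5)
Your proposal follows essentially the same route as the paper: reduce to the argument of Theorem~\ref{main0} by writing the $\Delta$-weak equation for $V_\phi$ with the extra zeroth-order term $\theta_X(\phi)$, check that $\theta_X(\phi)\in L^\infty$ (the paper cites Zhu's bound, while you derive $\theta_X(\phi)=\theta_X(0)+X(\phi)$ directly; both work), then invoke Theorem~\ref{prop2.6} and Corollary~\ref{weakuniqueness} to get $V_\phi\in W^{2,2}\cap C^\alpha$ and bootstrap. Your explicit discussion of Haar averaging to justify that invariant test functions suffice is a genuine point the paper leaves implicit, but it does not change the argument — the proof is correct and matches the paper's.
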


Since the arguments and discussions are very similar to the $K$-energy case, we shall keep the arguments brief. First we recall the modified $\cK$-energy, defined by Chen-Guan \cite{GC}(see also Chen-Tian\cite{CT2}),
\[
\delta \cK_X=-\int_M \delta\phi (R_\phi-\underline{R}-\theta_X(\phi))\omega_\phi^n,
\]
where $X$ is the extremal vector field and it is unique up to holomorphic transformation by a result of Futaki-Mabuchi(\cite{FM}), and $\theta_X(\phi)$ is the potential of $X$ with respect to $\omega_\phi$, such that $\nabla^{1, 0}_{\omega_\phi} \theta_X(\phi)=X$. We assume that $\omega_{\phi}$ is invariant under the action of $Im(X)$; such metrics are called invariant metrics and for invariant metrics, $\theta_X(\phi)$ is a well-defined real-valued function, such that
\[
L_X\omega_\phi=\sqrt{-1}\p\bar\p \theta_X(\phi). 
\] 
Now for only $C^{1, 1}$ potentials, the modified $K$-energy $\cK_X$ has a similar Chen's formula and it makes sense to talk about minimizers. If a minimizer is $L^\infty$, it satisfies the equation, in the $\Delta$-weak sense,
\[
R_\phi-\underline{R}=\theta_X(\phi). 
\]
For a $C^{1, 1}$ potential, it is evident that $\theta_X(\phi)$ is uniform bounded (assuming a normalized condition); indeed by a result of X. Zhu \cite{Zhu}  (Section 5) $\theta_X(\phi)$ is uniformly bounded for all invariant metrics (even though we do not really need this result with the $L^\infty$ assumption on $\omega_\phi$). In other words, we consider an $L^\infty$ invariant K\"ahler metric, such that its volume ratio $V_\phi$ satisfies the following equation in $\Delta$-weak sense,
\[
\Delta_\phi V_\phi=g^{i\bar j}_\phi R_{i\bar j}-\underline{R}-\theta_X(\phi). 
\]
We can run the arguments above to consider the $W^{1, 2}$ weak solution for
\[
\Delta_\phi u=g^{i\bar j}_\phi R_{i\bar j}-\underline{R}-\theta_X(\phi). 
\]
And we know that $u\in W^{2, 2}$ and it is also a $\Delta$-weak solution. By uniqueness we know $V_\phi=u+\text{const}$. This is sufficient to prove that $V_\phi\in W^{2, 2}\cap C^\alpha$ and this completes the proof.  

\section{Twisted CSCK and a priori estimate}

In this section we prove Theorem \ref{estimate}.

\begin{proof}[Proof of Theorem \ref{estimate}] We assume $\psi$ has average zero, $\int_M\psi =0$. Then $\Delta\psi<\infty$ implies that $\psi$ is uniformly bounded (note that $0<n+\Delta\psi$). Now we want to show that there exists a positive constant $C$,
\[
C^{-1}\omega\leq \omega_\psi.
\]
The result clearly holds for a uniformly bounded constant $\delta=\delta(\omega, n)$, by the openness result at $t=0$ of the second author \cite{Zeng} and Hashimoto \cite{Hashimoto}. Now we assume $t\geq \delta$. We rewrite the equation as
\begin{equation}
-t g^{i\bar j}_\psi \p^2_{i\bar j}\left(\log\frac{\omega_\psi^n}{\omega^n}+A\psi\right)=t\underline{R}+(1-t+At)\tr_{\omega_\psi}\omega-n(1-t+At)+t\tr_{\omega_\psi}(Ric),
\end{equation}
for a sufficiently large constant $A$. Suppose $\log\frac{\omega_\psi^n}{\omega^n}+A\psi$ achieves its minimum at a point $p$. Then at $p$, we have
\[
-t g^{i\bar j}_\psi \p^2_{i\bar j}\left(\log\frac{\omega_\psi^n}{\omega^n}+A\psi\right)(p)\leq 0.
\]
It follows that, at $p$, 
\[
t\underline{R}+(1-t+At)\tr_{\omega_\psi}\omega-n(1-t+At)+t\tr_{\omega_\psi}(Ric)\leq 0
\]
This implies that $\l_i\geq C^{-1}$ at $p$, where $\l_i$ are eigenvalues of $\omega_\psi$ with respect to $\omega$. It follows in particular that $\omega_\psi^n\geq C^{-1}\omega^n$ at $p$. Since $\psi$ is uniformly bounded and $\log\frac{\omega_\psi^n}{\omega^n}+A\psi$ achieves minimum at $p$, we get that $\omega_\psi^n\geq C^{-1}\omega^n$ holds. Together with the assumption $\Delta\psi<\infty$, we obtain $C^{-1}\omega\leq \omega_\psi.$ Once we obtain $C^{-1}\omega\leq \omega_\psi\leq C\omega$, it is  straightforward to obtain all higher order estimates, using various well-known elliptic theory, as in Section 2. We shall skip the argument. 
\end{proof}

In general it would be a very hard problem to obtain the bound on $\sup \Delta \psi$ for CSCK or twisted CSCK equation. On the other hand, such estimates holds for the complex Monge-Ampere equation by the seminal work of Yau \cite{Yau} and Aubin \cite{Aubin}.


\begin{thebibliography}{99}





\bibitem{Aubin}Aubin, T. Equations du type Monge-Amp\`ere sur les vari'et'es k\"ahl'eriennes compactes, Bull.
Sci. Math. (2) 102 (1978), no. 1, 63-95. MR494932 (81d:53047)

\bibitem{Berman}Berman, R. A thermodynamical formalism for Monge-Ampere equations, Moser-Trudinger inequalities and K\"ahler-Einstein metrics. Adv. Math. 248 (2013), 1254-1297. 
\bibitem{BB}Berman, R., Berndtsson, B., Convexity of the K-energy on the space of Kahler metrics and uniqueness of extremal metrics, arxiv.org/abs/1405.0401. 

\bibitem{BDL}Berman, R. J., Darvas, T., \& Lu, C. H. (2016). Regularity of weak minimizers of the K-energy and applications to properness and K-stability. arXiv preprint arXiv:1602.03114.

\bibitem{C1}Calabi, E. (1982). Extremal K\"ahler metrics. In Seminar on Differential Geometry (No. 102, p. 259). Princeton University Press.

\bibitem{C2}Calabi, E. (1985). Extremal Kähler metrics II. In Differential geometry and complex analysis (pp. 95-114). Springer Berlin Heidelberg.

\bibitem{CW}Chen, J., Warren, M.  On a fourth order Hamiltonian stationary equation: Regularity and removable singularities. 
arxiv.org/pdf/1611.02641.pdf

\bibitem{chen00} Chen, X. On the lower bound of the Mabuchi energy and its application. Internat. Math. Res. Notices 2000, no. 12, 607-623.

\bibitem{Chen}Chen, X. (2000). The space of K\"ahler metrics. Journal of Differential Geometry, 56(2), 189-234. 
 
\bibitem{chen08}Chen, X. Space of K\"ahler metrics (IV)--On the lower bound of the K-energy, arxiv.org/abs/0809.4081.


\bibitem{Ch}Chen, X. (2015). On the existence of constant scalar curvature K\"ahler metric: a new perspective. arXiv:1506.06423.

\bibitem{CDS1}Chen, X., Donaldson, S., \& Sun, S. (2015). K\"ahler-Einstein metrics on Fano manifolds. I: Approximation of metrics with cone singularities. Journal of the American Mathematical Society, 28(1), 183-197.

\bibitem{CDS2}Chen, X., Donaldson, S., \& Sun, S. (2015). K\"ahler-Einstein metrics on Fano manifolds. II: Limits with cone angle less than $2\pi$. Journal of the American Mathematical Society, 28(1), 199-234.

\bibitem{CDS3}Chen, X., Donaldson, S., \& Sun, S. (2015). K\"ahler-Einstein metrics on Fano manifolds. III: Limits as cone angle approaches $2\pi$ and completion of the main proof. Journal of the American Mathematical Society, 28(1), 235-278.

\bibitem{GC}Guan, D., \& Chen, X. (2000). Existence of extremal metrics on almost homogeneous manifolds of cohomogeneity one. Asian J. Math 4, 817-830.

\bibitem{CLP}Chen, X.,  Li, L., Paun, M., Approximation of weak geodesics and subharmonicity of Mabuchi energy, arxiv.org/abs/1409.7896.

\bibitem{CH}Chen, X., \& He, W. (2008). On the Calabi flow. American journal of mathematics, 130(2), 539-570.

\bibitem{CT}Chen, X., \& Tian, G. (2005). Uniqueness of extremal K\"ahler metrics. Comptes Rendus Mathematique, 340(4), 287-290.

\bibitem{CT2}Chen, X. X., \& Tian, G. (2008). Geometry of K\"ahler metrics and foliations by holomorphic discs. Publications math\`ematiques de l'IHÉS, 107(1), 1-107.
\bibitem{CTZ}Chen, X. X., Tian, G., \& Zhang, Z. (2011). On the weak K\"ahler-Ricci flow. Transactions of the American Mathematical Society, 2849-2863.

\bibitem{CFL}Chiarenza, F., Frasca, M., \& Longo, P. (1993). $W^{2, p}$-solvability of the Dirichlet problem for nondivergence elliptic equations with VMO coefficients. Transactions of the American Mathematical Society, 841-853.

\bibitem{Darvas14} Darvas, T. (2014). The Mabuchi geometry of finite energy classes. Adv. Math. 285 (2015), 182-219
\bibitem{DR} Darvas, T., \& Rubinstein, Y. (2017). Tian's properness conjectures and Finsler geometry of the space of K\"ahler metrics. Journal of the American Mathematical Society, 30(2), 347-387.

\bibitem{Donaldson97}Donaldson, S. K. (1997). Symmetric spaces, K\"ahler geometry and Hamiltonian dynamics. In Northern California Symplectic Geometry Seminar (No. 196, p. 13). American Mathematical Soc..

\bibitem{Donaldson01} Donaldson, S. K. (2001). Scalar curvature and projective embeddings, I. Journal of Differential Geometry, 59(3), 479-522.

\bibitem{Donaldson02}Donaldson, S.K. Lower bounds on the Calabi functional. J. Differential Geom. 70 (2005), no. 3, 453-472.

\bibitem{Donaldson}Donaldson, S. K. (2009). Constant scalar curvature metrics on toric surfaces. Geometric and Functional Analysis, 19(1), 83-136.


\bibitem{Fine}Fine, J. Constant scalar curvature K\"ahler metrics on fibred complex surfaces. J. Differential Geom. 68 (2004), no. 3, 397-432

\bibitem{FM}Futaki, A., \& Mabuchi, T. (1995). Bilinear forms and extremal K\"ahler vector fields associated with K\"ahler classes. Mathematische Annalen, 301(1), 199-210.



\bibitem{GZ}Guedj, V., \& Zeriahi, A. (2007). The weighted Monge-Amp\`re energy of quasiplurisubharmonic functions. Journal of Functional Analysis, 250(2), 442-482.


\bibitem{HZ}He, W., \& Zeng, Y. (2017). The Calabi flow with rough initial data. arXiv preprint arXiv:1701.06943.

\bibitem{Hashimoto}Hashimoto, Y. (2015). Existence of twisted constant scalar curvature K\"ahler metrics with a large twist. arXiv preprint arXiv:1508.00513.

\bibitem{CL}Li, C. Constant scalar curvature K\"ahler metric obtains the minimum of K-energy. Int. Math. Res. Not. IMRN 2011, no. 9, 2161-2175.

\bibitem{M1}Mabuchi, T. (1986). K-energy maps integrating Futaki invariants. Tohoku Mathematical Journal, 38(4), 575-593.

\bibitem{M2}Mabuchi, T. (1987). Some symplectic geometry on compact K\"ahler manifolds. Osaka J. Math, 24(2), 227-252.

\bibitem{Mabuchi}Mabuchi, T.  K-stability of constant scalar curvature polarization. arxiv.org/abs/0812.4093.

\bibitem{PT}Pucci, C., \& Talenti, G. (1976). Elliptic (second-order) partial differential equations with measurable coefficients and approximating integral equations. Advances in Mathematics, 19(1), 48-105.


\bibitem{Semmes} Semmes, S. (1992). Complex Monge-Ampere and symplectic manifolds. American Journal of Mathematics, 495-550.

\bibitem{Serrin}Serrin, J. (1964). Removable singularities of solutions of elliptic equations, Arch. Rational Mech. Anal. 17, 67-78.

\bibitem{ST}Song, J., Tian, G. The K\"ahler-Ricci flow on surfaces of positive Kodaira dimension. Invent. Math. 170 (2007), no. 3, 609-653, 

\bibitem{Stoppa}Stoppa, J. (2009). K-stability of constant scalar curvature K\"ahler manifolds. Advances in Mathematics, 221(4), 1397-1408.


\bibitem{Stoppa1}Stoppa, J. Twisted constant scalar curvature K\"ahler metrics and K\"ahler slope stability. J. Differential Geom. 83 (2009), no. 3, 663-691.
\bibitem{tian00}Tian, G. Canonical Metrics in K\"ahler Geometry, Birkh\"auser, 2000.
\bibitem{wangyu}Wang, Y. , On the $C^{2, \alpha}$-regularity of the complex Monge-Ampère equation. Math. Res. Lett. 19 (2012), no. 4, 939-946.


\bibitem{Yau}Yau, S. T. (1978). On the ricci curvature of a compact k\"ahler manifold and the complex monge-amp\`ere equation, I. Communications on pure and applied mathematics, 31(3), 339-411.

\bibitem{Zeng}Zeng, Y. (2015). Deformations from a given K\"ahler metric to a twisted cscK metric. arXiv preprint arXiv:1507.06287.

\bibitem{Zhu}Zhu, X.H. K\"ahler-Ricci soliton typed equations on compact complex manifolds with $c_1(M)>0$. J. Geom. Anal. 10 (2000), no. 4, 759-774.

\end{thebibliography}
\end{document}